\documentclass[12pt,draft]{article}

\usepackage{amsmath,amssymb,amsthm,xspace,amscd,tikz-cd}

\usetikzlibrary{babel}

\theoremstyle{remark}

\theoremstyle{plain}
\newtheorem{theorem}{Theorem}
\newtheorem*{theorem_nonumber}{Theorem}

\newtheorem{proposition}{Proposition}
\newtheorem*{proposition_nonumber}{Proposition}

\newcommand{\Id}{{\mathbb I\rm d}}

\newcommand{\be}{{\mathrm b}}
\newcommand{\en}{{\mathrm e}}

\newcommand{\A}{{\mathcal A}}
\newcommand{\Inv}{{\mathcal I}}
\newcommand{\R}{{\mathbb R}}
\newcommand{\Z}{{\mathbb Z}}

\newcommand{\vect}{{\mathbf v}}
\newcommand{\wect}{{\mathbf w}}
\newcommand{\uect}{{\mathbf u}}
\newcommand{\xect}{{\mathbf x}}
\newcommand{\yect}{{\mathbf y}}

\newcommand{\fect}{{\mathbf f}}
\newcommand{\Fect}{{\mathbf F}}

\newcommand{\Phict}{{\mathbf \Phi}}

\newcommand{\psl}{\mathrm{PSL}(2,\R)}

\begin{document}

\title{Linear-fractional rotational\\ interval exchange transformations}
\author{Alexey TEPLINSKY
\thanks{Institute of Mathematics NASU; Ukraine, Kyiv, Tereshchenkivska Street 3; e-mail:\ teplinsky.a@gmail.com}}
\maketitle
\hfill UDC 517.5
\begin{abstract}
Linear-fractional rotational interval exchange transformations appear 
naturally when investigating circle diffeomorphisms with breaks using 
the renormalization group approach. Earlier, we achieved an important 
rigidity result on circle diffeomorphisms with a single break by 
means of establishing a time-reversing duality for the corresponding 
commuting pairs of linear-fractional maps. In this paper, we extend 
the established duality to the case of multiple breaks, where a 
commuting pair from the previous case is replaced by a `commuting 
collection' of an even number of linear-fractional maps, which act according to a rotational interval exchange scheme of special type, 
while satisfying certain commutation relations.
\end{abstract}

\section{Introduction}
\label{sect:introduction}

Khanin and Vul~\cite{KhaninVul91} were the first to research dynamical properties of a circle diffeomorphism with a break (i.~e., an orientation-preserving circle homeomorphism that is smooth everywhere but a single point, at which there is a jump in its first derivative) by renormalization group analysis methods; they showed that the sequence of renormalizations for such a map approaches a space of certain linear-fractional functions. In the algorithm they used, the consecutive renormalizations are pairs of functions, which in the limit satisfy a commutation relation that includes the size of break (i.~e., the ratio of left and right derivatives at that singularity point) as a real parameter, and this is why the limiting space for consecutive renormalizations is called the space of linear-fractional commuting pairs. It is worth mentioning that similar notion of a commuting pair exists for renormalizations of holomorphic circle diffeomorphisms with critical singularity as well~\cite{Yampolsky03}. 

In our work~\cite{KhaninTeplinsky13}, the following result about circle diffeomorphisms with a break was proved: any two such $C^{2+\varepsilon}$-smooth diffeomorphisms, $\varepsilon>0$, with the same irrational rotation number (with certain restrictions) and the same size of break are not just topologically conjugate (which is implied by classic Denjoy theory), but also $C^1$-smooth conjugate. Among the principal tools in the proof of this result was an involutive operation of duality (which we invented in~\cite{Teplinsky08}) on the space of commuting pairs' parameters that reverses time for renormalization process, i.~e., this duality conjugates the renormalization operator acting on this space with the reverse renormalization operator. This time-reversing symmetry allowed us to find in the parameter space a two-dimentional structure known to dynamicists as a `Smail horseshoe' (or just as a hyperbolic horseshoe or a hyperbolic attractor), and afterwards the proof of main result was reduced to a relatively easy check on the conditions of so-called Conditional Theorem from~\cite{KhaninTeplinsky07}. 

Since then, we were steadily interested in extending the above-mentioned results onto the case, in which circle diffeomorphisms have not a single, but multiple breaks. In this paper, a substantial part of that program is established---namely, the notion of a commuting pair is generalized to a new notion of a commuting collection of $2N$ linear-fractional functions (where the integer $N\ge1$ corresponds to the number of breaks of a circle diffeomorphism under consideration); on these commuting collections, we define the induction (in the form of elementary induction steps of four types) and the duality operation that reverses time for induction process. Here it is worth to mention that processes of induction and renormalization are closely related: the action of a renormalization operator is a composition of a certain sequence of elementary induction steps (like in relation of Zorich `accelerated' induction~\cite{Zorich96} to Rauzy--Veech `elementary' induction~\cite{Veech78,Rauzy79} for classic interval exchange transformations~\cite{Keane75}) followed by a linear rescaling that spreads the newly decreased phase space of the dynamical system back to near-unit size. 

For defining and investigating commuting collections of linear-fractional functions and the involutive operation of duality, which acts on them and is crucial for our research, we use the formalism of interval rearrangement ensembles (IRE)~\cite{Teplinsky23} in the particular case of rotational interval exchange transformations~\cite{Teplinsky24} that actually correspond to rotations of a circle with marked points. In~\cite{Teplinsky23} we defined induction for IRE in the form of elementary induction steps of four types and duality as an operation on IRE schemes. Here we will expand both induction and duality to the space of commuting collections of algebraic linear-fractional functions, considered as elements of the projective special linear group $\psl$.

This paper's structure is the following: in Section~\ref{sect:basic} we remind the basic notions of our IRE theory developed in~\cite{Teplinsky23}, define nonlinear IREs and describe induction for them; in Section~\ref{sect:commuting} we define rotational schemes of special type and commuting collections of linear-fractional functions, and also describe induction for such collections; in Section~\ref{sect:duality} we present an algorithm allowing to calculate for a given commuting collection $\Fect$ the dual commuting collection $\Fect^*$ with the same sizes of breaks, for which purpose we define an array of node functions $\Phict$ and the dual array $\Phict^*$ positioned on a toroidal patchwork surface of natural extension; in Section~\ref{sect:symmetry} we prove the main result of this paper---theorem stating that the duality involution on commuting collections, which was defined in the previous section, reverses time for induction; in Section~\ref{sect:verification} we verify applicability of presented theory to circle diffeomorphisms with breaks by establishing certain connections to our previous and future works; the last Section~\ref{sect:summary} shortly summarizes our results.

This work was partially supported by Simons Foundation, grant SFI-PD-Ukraine-00014586.

\section{Basic concepts and definitions}
\label{sect:basic}

\subsection{Principal notions from the IRE theory}
\label{subsect:definitions}

Here we remind the main concepts of the IRE (interval rearrangement ensembles) theory developed by us in~\cite{Teplinsky23}. Let $\A$ be an alphabet of $d\ge1$ symbols (these will serve as labels for intervals that are being rearranged). By the definition~\cite{Teplinsky23}, an IRE \emph{scheme} is any permutation $\sigma$ of the doubled alphabet $\bar\A=\A\times\{\be,\en\}$ (the letters `$\be$' and `$\en$' stand for `beginning' and `ending' respectively).
An \emph{interval rearrangement ensemble} (IRE) is defined as a pair $(\sigma,\xect)$, where the scheme $\sigma$ is paired with a \emph{vector of endpoints} $\xect\in\R^{\bar\A}$, whose coordinates are \emph{allowed} by this scheme, i.~e.\ they satisfy the equalities
\begin{equation}\label{eq:endpoints_relation}
v_\alpha=x_{\sigma(\alpha\be)}-x_{\alpha\be}=x_{\alpha\en}-x_{\sigma(\alpha\en)}\quad{\text{for all}}\quad\alpha\in\A,
\end{equation} 
which also conveniently determine the \emph{vector of lengths} $\vect\in\R^{\A}$. If all the lengths $v_\alpha$ are positive, then the IRE $(\sigma,\xect)$ is called \emph{positive}. If for a scheme $\sigma$ there exists an allowed vector of lengths $\xect$ such that all the lengths (\ref{eq:endpoints_relation}) are positive, then the scheme $\sigma$ itself is called \emph{positive}.

An important point is that an IRE scheme $\sigma$, as any permutation, is decomposed in $N\ge1$ \emph{cycles} of the form $c=c(\bar\xi)=(\bar\xi,\sigma(\bar\xi),\dots,\sigma^k(\bar\xi))$, $\bar\xi\in\bar\A$, where $k\ge0$, $\sigma^{k+1}(\bar\xi)=\bar\xi$, $\sigma^i(\bar\xi)\ne\bar\xi$ for $0<i\le k$, and vectors $c(\bar\xi)$ and $c(\sigma^i(\bar\xi))$ are considered identical for all $i\in\Z$.

A positive IRE can be imagined as an ensemble of $2d$ intervals, which are coupled into $d$ pairs with labels $\alpha\in\A$; in every such a pair, the \emph{beginning interval} $I_{\alpha\be}=[x_{\alpha\be},x_{\sigma(\alpha\be)})$ and the \emph{ending interval} $I_{\alpha\en}=[x_{\sigma(\alpha\en)},x_{\alpha\en})$ with the same label $\alpha$ have the same length $v_\alpha$. Corresponding to the cycles in scheme $\sigma$, all beginning and ending intervals with respective subscripts are joined by their endpoints into $N$ circularly connected polygonal chains. Such a polygonal chain can be seen as a piecewise-linear one-dimensional circular path starting from $x_{\bar\xi}$ and going to $x_{\sigma(\bar\xi)}$ first, then continuing from $x_{\sigma(\bar\xi)}$ to $x_{\sigma^2(\bar\xi)}$, and so on until returning to $x_{\bar\xi}$; on this path, every beginning interval is passed from left to right, while every ending interval is passed from right to left. 

In the view of this IRE concept, we define a \emph{interval exchange scheme} $\sigma$ as a positive IRE scheme such that every cycle in it can be broken into two arcs in such a way that the first arc contains only beginning intervals, and the second one contains only ending intervals---i.~e., every cicle has the form $c=(\alpha_1\be,\dots,\alpha_m\be,\beta_n\en,\dots,\beta_1\en)$ for certain labels $\alpha_1,\dots,\alpha_m$, $\beta_1,\dots,\beta_n\in\A$, $n,m\ge1$. Cycles of such a form $(\alpha_1\be,\dots,\alpha_m\be,\beta_n\en,\dots,\beta_1\en)$ can be conveniently written down in the `two-row notation' format as
$\left[\begin{array}{ccc}
\alpha_1 & \dots & \alpha_m \\
\beta_1 & \dots & \beta_n                                                                                                                                           \end{array}\right]$. 

Accordingly, a positive IRE $(\sigma,\xect)$ is called an \emph{interval exchange transformation} (on multiple segments), if its scheme $\sigma$ is an interval exchange scheme. We associate this object with the discrete-time dynamical system, the phase space of which is the disjoint union $\bigsqcup_{s=1}^NJ_s$ of half-open segments $J_1,\dots,J_N\subset\R$ corresponding to the cycles $c_1,\dots,c_N$ in the scheme $\sigma$: namely, a segment $J=[x_{\alpha_1\be},x_{\beta_n\en})=\bigcup_{i=1}^mI_{\alpha_i\be}=\bigcup_{i=1}^nI_{\beta_i\en}$ corresponds to the cycle $c=\left[\begin{array}{ccc}
\alpha_1 & \dots & \alpha_m \\
\beta_1 & \dots & \beta_n                                                                                                                                           \end{array}\right]$. 
A dynamical system of interval exchange transformation is induced by the discontinuous mapping on $\bigsqcup_{s=1}^NJ_s$ that shifts every beginning interval $I_{\alpha\be}$ onto the corresponding ending interval $I_{\alpha\en}$, $\alpha\in\A$. (Here and further, we name $I_{\bar\xi}$, $\bar\xi\in\bar\A$, by the word `intervals', while $J_i$, $1\le i\le N$, we name by the word `segments', in order to distinguish between these two groups of essentially different objects, although formally both being half-open intervals.)

Our definition of interval exchange transformation generalizes the classical one only in that intervals are being exchanged not over a single, but over multiple segments, and the left endpoints of those segments are not necessarily located at the zero coordinate. However, the approach to interval exchanges through our IRE theory allows to use its key concept, which is the duality that reverses time for induction of Rauzy--Veech~\cite{Veech78,Rauzy79} type on schemes.

Two IRE schemes $\sigma$ and $\sigma^*$ are called \emph{dual} to each other, if
\begin{equation}\label{eq:duality}
\sigma^*(\alpha\be)=\sigma(\alpha\en),\quad
\sigma^*(\alpha\en)=\sigma(\alpha\be)\qquad\text{for all}\qquad\alpha\in\A.
\end{equation}

Obviously, $\sigma^{**}=\sigma$, i.~e.\ the \emph{duality operation} $\Inv$ on schemes defined as $\sigma^*=\Inv\sigma$, according to the formulas above, is an involution: $\Inv^2=\Id$.

Through this duality, we define rotational interval exchange transformations. An interval exchange scheme $\sigma$ is called \emph{rotational}, if the dual IRE scheme $\sigma^*$ is an interval exchange scheme as well. (It is clear that, by this definition, if the scheme $\sigma$ is rotational, then the scheme $\sigma^*$ is rotational as well, and vice versa.) Respectively to this, an inteval exchange transformation $(\sigma,\xect)$ is called \emph{rotational}, if its scheme $\sigma$ is rotational. 

Our recent paper~\cite{Teplinsky24} addresses rotational interval exchange transformations and describes exactly their relation to circle rotations. Not going into detail, one can say that a rotational interval exchange transformation, as a dynamical system, is effectively a first return map for a circle rotation onto a union of finite number of circle arcs.
(A unit circle rotation is formally defined as an automorphism $x\mapsto x+\rho$ acting on the factor space $\R/\Z$; it is called irrational if such is its rotation number $\rho$.)

\subsection{Nonlinear and linear-fractional IREs}
\label{subsect:nonlinear}

A classical interval exchange transformation~\cite{Keane75} is a dynamical system on a segment split into a finite number of intervals, everyone of whom is being rigidly shifted along the axis, and the union of their images is the same segment. The map inducing this dynamical system is one-to-one; on every beginning interval $I_{\alpha\be}$ it has the form of a \emph{shift} $f_\alpha:x\mapsto x+\ell_\alpha$ by certain constant distance $\ell_\alpha\in\R$, and the image of this beginning interval is the ending interval $I_{\alpha\en}=f_\alpha(I_{\alpha\be})$, $\alpha\in\A$. In IRE formalism, the beginning interval $I_{\alpha\be}=[x_{\alpha\be},x_{\sigma(\alpha\be)})$ is shifted onto the ending interval $I_{\alpha\en}=[x_{\sigma(\alpha\en)},x_{\alpha\en})$ according to the scheme $\sigma$, hence $\ell_\alpha=x_{\sigma(\alpha\en)}-x_{\alpha\be}=x_{\alpha\en}-x_{\sigma(\alpha\be)}$, $\alpha\in\A$.

A natural generalization of such a dynamical system is replacing shifts with arbitrary continuous increasing functions
$f_\alpha$, which results in \emph{nonlinear interval exchange transformations} (they are also known as GIET, i.~e.\ \emph{generalized interval exchange transformations}, but we find the latter naming a really poor choice, since the term `generalized' is too broad and therefore meaningless). In such a case, length of the ending interval $I_{\alpha\en}=f_\alpha(I_{\alpha\be})$ does not need to be the same as length of the beginning interval $I_{\alpha\be}$, however the union of all the ending intervals must coincide with the union of all the beginning intervals so that the united map is one-to-one. 

Formally, what we call \emph{nonlinear positive IRE} is a set $(\sigma,\xect,\fect)$ consisting of an IRE scheme $\sigma$, a vector of endpoints $\xect\in\R^\A$, whose coordinates satisfy
\begin{equation}\label{eq:endpoints_relation_nonlinear}
x_{\sigma(\alpha\be)}-x_{\alpha\be}>0, \quad x_{\alpha\en}-x_{\sigma(\alpha\en)}>0\quad{\text{for all}}\quad\alpha\in\A,
\end{equation} 
and also a set $\fect$ of continuous strictly increasing functions $f_\alpha$ that map the beginning intervals $I_{\alpha\be}=[x_{\alpha\be},x_{\sigma(\alpha\be)})$ onto the ending intervals $I_{\alpha\en}=[x_{\sigma(\alpha\en)},x_{\alpha\en})$, $\alpha\in\A$. 

If $\sigma$ is an interval exchange scheme, then a nonlinear positive IRE $(\sigma,\xect,\fect)$ is called a \emph{nonlinear interval exchange transformation} and associated with the respective dynamical system on the disjoint union $\bigsqcup_{s=1}^NJ_s$ of segments $J_1,\dots,J_N\subset\R$ that correspond to the cycles $c_1,\dots,c_N$ in the scheme $\sigma$ (see the previous subsection). This dynamical system is induced by the piecewise continuous mapping $f:x\mapsto f_\alpha(x)$ for $x\in I_{\alpha\be}$, $\alpha\in\A$.  In the case of a rotational scheme $\sigma$, the set $(\sigma,\xect,\fect)$ and the respective dynamical system are both called a \emph{nonlinear rotational interval exchange transformation}.

It is clear that if all $f_\alpha$ are shifts, then the equalities (\ref{eq:endpoints_relation}) are satisfied, and we have an interval exchange transformation from the previous subsection, therefore they are a particular case of nonlinear interval exchange transformations. Other important particular cases of nonlinear interval exchange transformations are \emph{affine} and \emph{linear-fractional} ones: by these we mean the cases when all the functions in the set  $\fect$ are affine or, respectively, linear-fractional.


\subsection{Induction for nonlinear positive IRE}
\label{subsect:induction}

The main tool we apply to IREs is induction of Rauzy--Veech kind in the form of four elementary induction steps defined in Sect.~7 of our work~\cite{Teplinsky23}. Their action is being naturally generalized onto the case of nonlinear positive IREs defined in the previous subsection, and in this subsection we will describe it, but first let us remind how their action is defined on schemes.

We consider \emph{elementary induction steps} of four types: `cropping a beginning interval on the right', `cropping an ending interval on the right', `cropping a beginning interval on the left', and `cropping the ending interval on the left', denoting these operations as $\Pi_{\alpha\beta}^{\mathrm{rb}},$ $\Pi_{\alpha\beta}^{\mathrm{re}},$ $\Pi_{\alpha\beta}^{\mathrm{lb}}$, and $\Pi_{\alpha\beta}^{\mathrm{le}}$ respectively. (The superscripts `r' and `l' here come from the words `right' and `left', while  $\alpha,\beta\in\A$, $\alpha\ne\beta$, are the labels of the beginning and ending intervals, respectively, engaged in the operation.)
 
The action of these four steps on IRE schemes is easy to describe in terms of cycles in the permutation $\sigma$, namely: the step $\Pi_{\alpha\beta}^{\mathrm{rb}}$ (the steps $\Pi_{\alpha\beta}^{\mathrm{re}},$ $\Pi_{\alpha\beta}^{\mathrm{lb}},$ $\Pi_{\alpha\beta}^{\mathrm{le}}$) move the element $\beta\en$ (the elements $\alpha\be,$ $\beta\en,$ $\alpha\be$) from its current position into a position straight before $\alpha\en$ (straight after $\beta\be,$ straight after $\alpha\en,$ straight before $\beta\be$). (By saying that $\bar\xi$ stands straight before $\bar\eta,$ and $\bar\eta$ stands straight after $\bar\xi,$ we mean that $\sigma(\bar\xi) = \bar\eta,$ $\bar\xi, \bar\eta\in\bar\A.$)

Formal definitions below explicitly describe the schemes $\sigma'=\Pi\sigma$ obtained by applying each of the steps $\Pi_{\alpha\beta}^{\mathrm{rb}},$ $\Pi_{\alpha\beta}^{\mathrm{re}},$ $\Pi_{\alpha\beta}^{\mathrm{lb}}$, and $\Pi_{\alpha\beta}^{\mathrm{le}}$ in place of $\Pi$.
\begin{description}
\item[$\Pi_{\alpha\beta}^{\mathrm{rb}}$:]
if $\sigma(\beta\en) = \alpha\en,$ then $\sigma' = \sigma;$
otherwise $\sigma'(\alpha\be) = \sigma(\beta\en),$ $\sigma'(\beta\en) = \alpha\en,$ $\sigma'(\sigma^{-1}(\alpha\en)) = \beta\en,$
while $\sigma'(\bar\xi) = \sigma(\bar\xi)$ for all $\bar\xi\in\bar\A\,\backslash\{\alpha\be, \beta\en, \sigma^{-1}(\alpha\en)\}.$

\item[$\Pi_{\alpha\beta}^{\mathrm{re}}$:]
if $\sigma(\beta\be) = \alpha\be,$ then $\sigma' = \sigma;$
otherwise $\sigma'(\sigma^{-1}(\alpha\be)) = \beta\en,$ $\sigma'(\beta\be) = \alpha\be,$ $\sigma'(\alpha\be)  = \sigma(\beta\be),$
while $\sigma'(\bar\xi) = \sigma(\bar\xi)$ for all $\bar\xi\in\bar\A\,\backslash\{\sigma^{-1}(\alpha\be), \beta\be, \alpha\be\}.$

\item[$\Pi_{\alpha\beta}^{\mathrm{lb}}$:]
if $\sigma(\alpha\en) = \beta\en,$ then $\sigma' = \sigma;$
otherwise $\sigma'(\sigma^{-1}(\beta\en)) = \alpha\be,$ $\sigma'(\alpha\en) = \beta\en,$ $\sigma'(\beta\en) = \sigma(\alpha\en),$
while $\sigma'(\bar\xi) = \sigma(\bar\xi)$ for all $\bar\xi\in\bar\A\,\backslash\{\sigma^{-1}(\beta\en), \alpha\en, \beta\en\}.$

\item[$\Pi_{\alpha\beta}^{\mathrm{le}}$:]
if $\sigma(\alpha\be) = \beta\be,$ then $\sigma' = \sigma;$
otherwise $\sigma'(\beta\en) = \sigma(\alpha\be),$ $\sigma'(\alpha\be) = \beta\be,$ $\sigma'(\sigma^{-1}(\beta\be)) = \alpha\be,$ 
while $\sigma'(\bar\xi) = \sigma(\bar\xi)$ for all $\bar\xi\in\bar\A\,\backslash\{\beta\en, \alpha\be, \sigma^{-1}(\beta\be)\}.$
\end{description}

Proposition~3 from the work~\cite{Teplinsky23} completely describes both the domain and the range for every of these steps on schemes, let us cite it here: 

\begin{proposition_nonumber}[\cite{Teplinsky23}]
The elementary induction step $\Pi_{\alpha\beta}^{\mathrm{rb}}$ (the steps $\Pi_{\alpha\beta}^{\mathrm{re}},$ $\Pi_{\alpha\beta}^{\mathrm{lb}},$ $\Pi_{\alpha\beta}^{\mathrm{le}}$) acts on IRE schemes as a bijection from the set of all $\sigma$ such that $\sigma(\alpha\be) = \beta\en$ (such that $\sigma(\alpha\be) = \beta\en,$ $\sigma(\beta\en) = \alpha\be,$ $\sigma(\beta\en) = \alpha\be${)} onto the set of all $\sigma'$ such that $\sigma'(\beta\en) = \alpha\en$ (such that $\sigma'(\beta\be) = \alpha\be,$ $\sigma'(\alpha\en) = \beta\en,$ $\sigma'(\alpha\be) = \beta\be${).}
\end{proposition_nonumber}

Now, let us be given a nonlinear positive IRE $(\sigma,\xect,\fect)$. We say that an elementary induction step $\Pi_{\alpha\beta}^{\mathrm{rb}}$ (steps $\Pi_{\alpha\beta}^{\mathrm{re}}$, $\Pi_{\alpha\beta}^{\mathrm{lb}}$, $\Pi_{\alpha\beta}^{\mathrm{le}}$) is \emph{applicable} to this IRE, if the equality $\sigma(\alpha\be)=\beta\en$ (the equalities $\sigma(\alpha\be)=\beta\en$, $\sigma(\beta\en)=\alpha\be$, $\sigma(\beta\en)=\alpha\be$) and the inequality $x_{\alpha\be}<x_{\sigma(\beta\en)}$ (the inequalities $x_{\alpha\be}>x_{\sigma(\beta\en)}$, $x_{\beta\en}<x_{\sigma(\alpha\be)}$, $x_{\beta\en}>x_{\sigma(\alpha\be)}$) are satisfied, $\alpha,\beta\in\A$. Every equality here guaranties that the corresponding step is defined on the scheme $\sigma$, while every inequality guarantees that the nonlinear IRE $(\sigma',\xect',\fect')$ obtained from $(\sigma,\xect,\fect)$ by application of that step, stays positive. The action of steps on a scheme $\sigma$ we defined above, and now we define how the sets of endpoints and functions are changed. 

In each of the four cases, coordinates of exactly two endpoints get changed, as shown by the formulas below.

\begin{description}
\item[$\Pi_{\alpha\beta}^{\mathrm{rb}}$:]
$x'_{\beta\en}=x_{\alpha\en}$, $x'_{\alpha\en}=f_\alpha(x_{\sigma(\beta\en)})$, while 
$x'_{\bar\xi}=x_{\bar\xi}$ for all $\bar\xi\in\bar\A\backslash\{\beta\en,\alpha\en\}$.

\item[$\Pi_{\alpha\beta}^{\mathrm{re}}$:]
$x'_{\beta\en}=x_{\alpha\be}$, $x'_{\alpha\be}=f^{-1}_\beta(x_{\alpha\be})$, while 
$x'_{\bar\xi}=x_{\bar\xi}$ for all $\bar\xi\in\bar\A\backslash\{\beta\en,\alpha\be\}$.

\item[$\Pi_{\alpha\beta}^{\mathrm{lb}}$:]
$x'_{\alpha\be}=x_{\beta\en}$, $x'_{\beta\en}=f_\alpha(x_{\beta\en})$, 
while 
$x'_{\bar\xi}=x_{\bar\xi}$ for all $\bar\xi\in\bar\A\backslash\{\alpha\be,\beta\en\}$.

\item[$\Pi_{\alpha\beta}^{\mathrm{le}}$:]
$x'_{\alpha\be}=x_{\beta\be}$, $x'_{\beta\be}=f^{-1}_\beta(x_{\sigma(\alpha\be)})$, 
while 
$x'_{\bar\xi}=x_{\bar\xi}$ for all $\bar\xi\in\bar\A\backslash\{\alpha\be,\beta\be\}$.
\end{description}

In each of the four cases, exactly one function gets changed, as shown by the formulas below.

\begin{description}
\item[$\Pi_{\alpha\beta}^{\mathrm{rb}}$:]
$f'_\beta=f_\alpha\circ f_\beta$, while $f'_\xi=f_\xi$ for all $\xi\in\A\backslash\{\beta\}$.

\item[$\Pi_{\alpha\beta}^{\mathrm{re}}$:]
$f'_\alpha=f_\alpha\circ f_\beta$, while $f'_\xi=f_\xi$ for all $\xi\in\A\backslash\{\alpha\}$.

\item[$\Pi_{\alpha\beta}^{\mathrm{lb}}$:]
$f'_\beta=f_\alpha\circ f_\beta$, while $f'_\xi=f_\xi$ for all $\xi\in\A\backslash\{\beta\}$.

\item[$\Pi_{\alpha\beta}^{\mathrm{le}}$:]
$f'_\alpha=f_\alpha\circ f_\beta$, while $f'_\xi=f_\xi$ for all $\xi\in\A\backslash\{\alpha\}$.
\end{description}
(Strictly saying, in each case the second function among $f_\alpha$ and $f_\beta$ also gets changed, namely its domain gets cropped in accordance with the endpoints change---but the function does not change over that cropped interval.  It is worth adding here that in the main part of this paper we will be dealing with algebraic linear-fractional functions, which are not connected to intervals.)

The reason we gave this namely definition of induction steps for studying dynamical systems is demonstrated by the next Proposition regarding the case of a nonlinear interval exchange transformation (which is, on the one hand, a one-to-one map of a disjoint union of segments onto itself, and, on the other, a particular case of nonlinear positive IREs).

\begin{proposition}
Any elementary induction step that is applicable to a given nonlinear interval exchange transformation transforms it into a nonlinear interval exchange transformation, which is the first return map onto the appropriately cropped disjoint union of segments.
\end{proposition}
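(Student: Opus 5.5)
The proof will be a direct case-by-case verification over the four step types. Since $\Pi^{\mathrm{re}}$, $\Pi^{\mathrm{lb}}$, $\Pi^{\mathrm{le}}$ are obtained from $\Pi^{\mathrm{rb}}$ by the symmetries reversing orientation ($x\mapsto -x$, swapping left and right) and/or time ($f\mapsto f^{-1}$, swapping $\be$ and $\en$), I would carry out $\Pi_{\alpha\beta}^{\mathrm{rb}}$ in full and indicate how the other three follow by these symmetries.

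\textbf{Setup.} Let $\Pi_{\alpha\beta}^{\mathrm{rb}}$ be applicable, so $\sigma(\alpha\be)=\beta\en$ and $x_{\alpha\be}<x_{\sigma(\beta\en)}$. Since $\sigma$ is an interval exchange scheme, $\alpha\be$ is the last beginning element and $\beta\en$ the last ending element of their common cycle $c=\left[\begin{array}{ccc}\alpha_1&\dots&\alpha_m\\ \beta_1&\dots&\beta_n\end{array}\right]$, with $\alpha_m=\alpha$ and $\beta_n=\beta$. Hence the segment $J=[x_{\alpha_1\be},x_{\beta\en})$ has $I_{\alpha\be}$ and $I_{\beta\en}$ as its rightmost pieces, both ending at $x_{\beta\en}$. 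The inequality $x_{\alpha\be}<x_{\sigma(\beta\en)}$ says $I_{\beta\en}\subsetneq I_{\alpha\be}$.

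\textbf{Crop and first return.} Crop $J$ to $J'=[x_{\alpha_1\be},x_{\sigma(\beta\en)})$; all other segments stay unchanged. Take $y\in J'$. If $y\notin I_{\alpha\be}$, then $f(y)$ lies in some $I_{\xi\en}$ with $\xi\ne\beta$, which is disjoint from the removed part $I_{\beta\en}$, so the return time is one and the map is unchanged. If $y\in I_{\alpha\be}\cap J'=[x_{\alpha\be},x_{\sigma(\beta\en)})$, then $f(y)=f_\alpha(y)\in I_{\alpha\en}$, which is not the removed interval (as $\alpha\ne\beta$), so again the return time is one and $y\mapsto f_\alpha(y)$. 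The new $\alpha$-ending interval is therefore $[x_{\sigma(\alpha\en)},f_\alpha(x_{\sigma(\beta\en)}))$. If $y\in I_{\beta\be}$ and $f_\beta(y)\in I_{\beta\en}$, the orbit has left $J'$; since $I_{\beta\en}\subset I_{\alpha\be}$, the next iterate is $f_\alpha(f_\beta(y))$. This point lies in $f_\alpha(I_{\beta\en})=[f_\alpha(x_{\sigma(\beta\en)}),x_{\alpha\en})$, the top part of $I_{\alpha\en}$, which meets neither $I_{\beta\en}$ nor the removed region. So the return time is two and the return map is $f'_\beta=f_\alpha\circ f_\beta$, sending $I_{\beta\be}$ onto $[f_\alpha(x_{\sigma(\beta\en)}),x_{\alpha\en})$.

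\textbf{Matching the formulas.} This exactly reproduces the given data: $x'_{\alpha\en}=f_\alpha(x_{\sigma(\beta\en)})$, $x'_{\beta\en}=x_{\alpha\en}$, and the interval $I'_{\beta\en}=[x'_{\sigma'(\beta\en)},x'_{\beta\en})=[x'_{\alpha\en},x_{\alpha\en})$, using $\sigma'(\beta\en)=\alpha\en$. The remaining task is to check combinatorially that $\sigma'$ is again an interval exchange scheme realizing this picture. Moving $\beta\en$ to stand straight before $\alpha\en$ removes $\beta\en$ from cycle $c$: the beginning row stays the same, and $\sigma'(\alpha\be)=\sigma(\beta\en)$ makes $\beta_{n-1}\en$ the new last ending element. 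It inserts $\beta\en$ into the ending row of the cycle containing $\alpha\en$, directly to the right of $\alpha\en$. That cycle may be $c$ itself, which is harmless. Both cycles keep the two-arc form, so $\sigma'$ is an interval exchange scheme. Positivity of all new intervals follows from the applicability inequality and strict monotonicity of $f_\alpha$. The main obstacle is this bookkeeping, including the degenerate case $n=1$: there $\sigma(\beta\en)=\alpha_1\be$, so applicability gives $x_{\alpha_1\be}<x_{\alpha\be}$ (that is, $m\ge2$), and it must be checked that this is consistent. The segment unions then coincide and $f'$ is a bijection of $\bigsqcup J'_s$, which proves the Proposition.
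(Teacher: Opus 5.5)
Your route is the paper's: check $\Pi^{\mathrm{rb}}_{\alpha\beta}$ directly (return time one off $I_{\beta\be}$, two on $I_{\beta\be}$ via $f_\alpha\circ f_\beta$), and treat the other steps alike. Your first-return computation and the matching with the endpoint formulas are correct, and more complete than the paper's. The degenerate case, however, has the inequality reversed. For $n=1$ one has $\sigma(\beta\en)=\alpha_1\be$, so applicability demands $x_{\alpha\be}<x_{\alpha_1\be}$. This is impossible, since $x_{\alpha_1\be}\le x_{\alpha\be}$; geometrically, $I_{\beta\en}=J$ cannot lie strictly inside $I_{\alpha\be}$. Thus applicability forces $n\ge2$, i.e.\ $\sigma(\beta\en)$ is an ending element. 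That is exactly what your claim that $\beta_{n-1}\en$ becomes the new last ending element needs. In the situation you describe ($n=1$, $m\ge2$), the cycle $c$ would be left with no ending element at all. Two smaller slips: the first case of your split should be $y\in I_{\xi\be}$ with $\xi\ne\beta$, not $y\notin I_{\alpha\be}$; and $y$ should range over the whole cropped union, since $I_{\beta\be}$ need not lie in $J'$.

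The more serious issue is the sentence ``both cycles keep the two-arc form, so $\sigma'$ is an interval exchange scheme''. By definition, an interval exchange scheme must also be positive: it must admit endpoints allowed in the sense of (\ref{eq:endpoints_relation}) with all $v_\xi>0$. The nonlinear data say nothing about this, because the $f_\xi$ do not preserve lengths, and it can in fact fail. Take $\sigma=\{(\delta\be,\alpha\be,\beta\en,\gamma\en),(\beta\be,\alpha\en),(\gamma\be,\delta\en)\}$, which is an interval exchange scheme (all lengths equal to $1$ are allowed). Give it the segments $[0,2)$, $[0,1)$, $[0,1)$, with $x_{\alpha\be}=\frac12$ and $x_{\gamma\en}=1$ inside the first one, and affine $f_\xi$. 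Then $\Pi^{\mathrm{rb}}_{\alpha\beta}$ is applicable, since $\frac12<x_{\sigma(\beta\en)}=1$. Yet $\sigma'$ contains the cycle $(\beta\be,\beta\en,\alpha\en)$, which forces $v_\alpha=0$, so $\sigma'$ is not positive. The paper's proof is silent on this point too. What your argument (and the paper's) actually establishes is that $(\sigma',\xect',\fect')$ is a nonlinear positive IRE whose cycles have the two-arc form and which is the first return map onto the cropped union. Linear positivity of $\sigma'$ has to be imposed separately, which is what the condition of not ruining positiveness in Subsection~\ref{subsect:prohibited} does.
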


\begin{proof}
In each of the four cases, this statement is checked directly through formulas above. Consider the case of cropping a beginning interval on the right; the other cases are done similarly.

So, let an elementary induction step $\Pi_{\alpha\beta}^{\mathrm{rb}}$, $\alpha,\beta\in\A$, be applicable to a nonlinear interval exchange transformation $(\sigma,\xect,\fect)$. By definition of applicability this means that one of the cycles in the permutation $\sigma$ has the form $(\alpha_1\be,\dots,\alpha_m\be,\alpha\be,\beta\en,\beta_n\en,\dots,\beta_1\en)$, or $\left[\begin{array}{cccc}
\alpha_1 & \dots & \alpha_m & \alpha \\
\beta_1 & \dots & \beta_n & \beta                                                                                                                                           \end{array}\right]$ in two-row notation, for certain $\alpha_1,\dots,\alpha_m,\beta_1,\dots,\beta_n\in\A$, and that within the corresponding segment $J=[x_{\alpha_1\be},x_{\beta\en})=I_{\alpha_1\be}\cup\dots\cup I_{\alpha_m\be}\cup I_{\alpha\be}=I_{\beta_1\en}\cup\dots\cup I_{\beta_n\en}\cup I_{\beta\en}$ the rightmost beginning interval $I_{\alpha\be}=[x_{\alpha\be},x_{\beta\en})$ is longer than the rightmost ending interval $I_{\beta\en}=[x_{\beta_n\en},x_{\beta\en})$. Cropping $I_{\beta\en}$ away from $J$ creates a new, decreased in length, segment $J'=J\backslash[x_{\beta_n\en},x_{\beta\en})$, while the other segments do not change. It is easy to see that the nonlinear interval exchange transformation $(\sigma',\xect',\fect')$ determined by the above-written formulas for $\Pi_{\alpha\beta}^{\mathrm{rb}}$ is indeed the first return map for the starting interval exchange transformation onto the newly cropped phase space (the disjoint union of segments). Points lying on the beginning intervals $I_{\xi\be}$, $\xi\ne\beta$, get returned to the cropped phase space in one step, therefore the functions $f_\xi$, $\xi\ne\beta$, stay unchanged. And points of the beginning interval $I_{\beta\be}$ return in two steps: first, the function $f_\beta$ moves them to the interval $I_{\beta\en}$, which was cropped away, and then, as the second step, the function $f_\alpha$ returns them to the decreased phase space---therefore in the newly obtained system the function $f_\alpha\circ f_\beta$ acts on the interval $I_{\beta\be}$.

Proposition is proved.
\end{proof}

\section{Commuting collections of linear-fractional functions}
\label{sect:commuting}

\subsection{Rotational schemes of special type and commuting collections}
\label{subsect:commuting_collections}

Consider an alphabet $\A$ consisting of $d=2N$ symbols, $N\ge1$, wtih $N$ special symbols $\omega_1,\dots,\omega_N$ among them. The corresponding beginning elements $\omega_1\be$, \dots, $\omega_N\be$ of the doubled alphabet $\bar\A=\A\times\{\be,\en\}$ (which consists of $4N$ elements) we call \emph{stationary}. A rotational interval exchange scheme $\sigma$, which is a permutation on $\bar\A$, is called a \emph{rotational scheme of special type} (with a given set of stationary elements), if this scheme, together with the dual scheme $\sigma^*$, consist of exactly $N$ cycles each, and every cycle contains exactly one stationary element. Formally: $\sigma=\{c_1,\dots,c_N\}$, $\omega_i\be\in c_i$, $1\le i\le N$; $\sigma^*=\{z_1,\dots,z_N\}$, $\omega_i\be\in z_i$, $1\le i\le N$. In two-row notation:
\begin{gather*}
c_i=\left[\begin{array}{ccc}
\alpha_{1i} & \dots & \alpha_{m_i i} \\
\beta_{1i} & \dots & \beta_{n_i i}                                                                                                                                           \end{array}\right],\quad \omega_i\in\{\alpha_{1i}, \dots, \alpha_{m_i i}\},\quad 1\le i\le N;\\ 
z_i=\left[\begin{array}{ccc}
\gamma_{1i} & \dots & \gamma_{s_i i} \\
\delta_{1i} & \dots & \delta_{t_i i}                                                                                                                                           \end{array}\right],\quad \omega_i\in\{\gamma_{1i}, \dots, \gamma_{s_i i}\},\quad 1\le i\le N.
\end{gather*}
It is clear that the scheme, which is dual to a rotational scheme of special type, is a rotational scheme of special type as well, with the same set of stationary elements.

Let us be given a set of positive numbers $\nu_1,\dots,\nu_N$. They correspond to the sizes of breaks for a circle diffeomorphism with breaks, which renormalizations are studied, and we call these numbers accordingly as \emph{sizes of breaks}. 

Consider a nonlinear interval exchange transformation  $(\sigma,\xect,\Fect)$ with a rotational scheme of special type $\sigma$, the coordinates of \emph{stationary endpoints} $x_{\omega_i\be}=0$, $1\le i\le N$ (they correspond to the points of breaks for a circle diffeomorphism with breaks), and a set of linear-fractional maps $\Fect=(F_\alpha)_{\alpha\in\A}$, which satisfy (as algebraic functions) the following formal commutation relations along the cycles of the dual scheme $\sigma^*$:
\begin{equation}\label{eq:commutation}
\tilde F_{\gamma_{s_i i}}\circ\dots\circ \tilde F_{\gamma_{1i}}=
 F_{\delta_{t_i i}}\circ\dots\circ F_{\delta_{1i}},\quad 1\le i\le N,
\end{equation}
where the \emph{adjusted functions} $\tilde F_\alpha$, $\alpha\in\A$, are defined as $\tilde F_{\omega_i}=F_{\omega_i}\circ \nu_i\Id$, $1\le i\le N$, and $\tilde F_\alpha=F_\alpha$ for $\alpha\not\in\{\omega_1,\dots,\omega_N\}$, while $\nu_i\Id$ denotes a linear function $x\mapsto\nu_ix$. (Effectively, each one of $N$ commutation relations (\ref{eq:commutation}) contains exactly one size of break $\nu_i$, and its position in the left-hand side of the $i$th equality is determined by the position of a corresponding stationary element $\omega_i\be$ in the cycle $z_i$ in the dual scheme $\sigma^*$.)

A set of $2N$ algebraic increasing linear-fractional functions $\Fect=(F_\alpha)_{\alpha\in\A}$, which satisfy given commutation relations (\ref{eq:commutation}), is called a \emph{commuting collection}.

Here and further on, by saying `algebraic increasing linear-fractional functions' instead of `maps', we emphasize their disconnection from domains of definition. We identify such functions with elements of the projective special linear group $\psl$, which we define as the result of factorizing the group of all real matrices $2\times2$ with positive determinants $\mathrm{GL^+}(2,\R)=\left\{\left(\begin{array}{cc}
a & b\\
c & d                                                                                                                                           \end{array}\right): a,b,c,d\in\R, ad-bc>0\right\}$ modulo its center---that is the subgroup of all scalar matrices $\mathrm{Z}(2,\R)=\left\{\left(\begin{array}{cc}
a & 0\\
0 & a                                                                                                                                           \end{array}\right):a\ne0\right\}$. Thus $\psl=\mathrm{GL^+}(2,\R)/\mathrm{Z}(2,\R)$, and we naturally identify an algebraic increasing linear-fractional function $F:x\mapsto\displaystyle\frac{ax+b}{cx+d}$ with the element $\left(\begin{array}{cc}
a & b\\
c & d                                                                                                                                           \end{array}\right)$ of this group (strictly saying---with the equivalence class $\left(\begin{array}{cc}
a & b\\
c & d                                                                                                                                           \end{array}\right)\mathrm{Z}(2,\R)\in\psl$, but we will not be pointing at this later in the text for simplicity). The composition operation for such functions corresponds exactly to the group operation on $\psl$, which is effectively the matrix product, and therefore the commutation relations (\ref{eq:commutation}) can be seen as matrix products. Let us specially mention here that the factors $\nu_i\Id$ in these relations should not be confused with scalar matrices: as we said above, these factors are linear functions, with corresponding matrices $\left(\begin{array}{cc}
\nu_i & 0\\
0 & 1                                                                                                                                           \end{array}\right)$.

We have to remark that, when studying circle diffeomorphisms with breaks, it is natural to assume that all sizes of breaks are different from~1, since unit size of break effectively means the absence of break; also, a unit size of break can change dimension of the space of commuting collections: for ex., in the case $\nu\ne1$ the commutation relation $F\circ G\circ\nu\Id=G\circ F$ determines a 3-parameter space of commuting collections $F,G\in\psl$, while in the case $\nu=1$ the same relation determines a 4-parameter space. However, for the theory we develop here, such a restriction is unnecessary, and so we do not impose it. Let us also mention that in the case when the product of all sizes of breaks is unit, i.~e.\ $\nu_1\nu_2\dots\nu_N=1$, the renormalizations of a corresponding circle diffeomorphism with breaks under broad assumptions approach a certain space of not just linear-fractional, but affine functions, as was shown, in particular, in~\cite{CunhaSmania13}.

Definitely, not every commuting collection of $\psl$ elements determines a real linear-fractional interval exchange transformation, because one cannot guarantee that the corresponding linear-fractional functions are continuous on the corresponding intervals. But if it is assumed that the linear-fractional interval exchange transformation $(\sigma,\xect,\Fect)$ described above really exists, then the formal commutation relations (\ref{eq:commutation}) contain the whole information on alphabet, on stationary elements and sizes of breaks (in the case of a unit size of break, the corresponding stationary element must be specially marked), on scheme $\sigma^*$ and therefore on scheme $\sigma$ as well. Moreover, if the commutation relations (\ref{eq:commutation}) are satisfied by a given commuting collection $\Fect$, then the vector $\xect$ is determined: $N$ stationary endpoints are fixed at the origin, and the rest $3N$ endpoints are determined as consecutive images of these $N$ stationary endpoints under action of corresponding maps from the given collection (along the cycles in the scheme $\sigma^*$). Therefore we see that all the information about the nonlinear interval exchange transformation $(\sigma,\xect,\Fect)$ is contained in the commutation relations (\ref{eq:commutation}) and a commuting (according to them) collection of algebraic linear-fractional functions $F_\alpha\in\psl$, $\alpha\in\A$.

\subsection{Induction for commuting collections and allowed steps}
\label{subsect:prohibited}

Applying induction to rotational schemes of special type, we want to guarantee that they stay rotational schemes of special type; we also want to guarantee that the coordinates of stationary endpoints $x_{\omega_i\be}$, $1\le i\le N$, in corresponding nonlinear interval exchange transformations $(\sigma,\xect,\Fect)$ described in Subsection~\ref{subsect:commuting_collections} do not change. Accordingly to formulas for endpoints change under the action of induction steps (written down in Subsection~\ref{subsect:induction}), we must exclude from our consideration the steps $\Pi_{\omega_i\beta}^{\mathrm{re}}$, $\Pi_{\omega_i\beta}^{\mathrm{lb}}$, $\Pi_{\alpha\omega_i}^{\mathrm{le}}$, and $\Pi_{\omega_i\beta}^{\mathrm{le}}$ for all $1\le i\le N$, $\alpha,\beta\in\A$, and so we call these steps \emph{prohibited} (for a given set of stationary elements $\omega_i\be$, $1\le i\le N$). We call induction steps being \emph{ruining positiveness} (for a given rotational scheme of special type $\sigma$), if their application to that scheme makes it not positive (and therefore not rotational). Elementary induction steps, which are defined (see the cited Proposition from Subsection~\ref{subsect:induction}) for a given rotational scheme of special type, and are neither prohibited, nor ruining positiveness, we call \emph{allowed}. It is easy to see that the result of applying any allowed elementary induction step to a rotational scheme of special type $\sigma$ is a certain rotational scheme of special type $\sigma'$ with the same stationary elements $\omega_i\be$, $1\le i\le N$. 

Elementary induction steps can be applied directly to a given commuting collection $\Fect=(F_\alpha)_{\alpha\in\A}$ accordingly to the formulas for nonlinear IREs written down in Subsection~\ref{subsect:induction}; their action results in a new set of linear-fractional functions $\Fect'=(F'_\alpha)_{\alpha\in\A}$, which we present below for the four cases $\Fect'=\Pi\Fect$, where $\Pi$ denotes one of the steps $\Pi_{\alpha\beta}^{\mathrm{rb}}$, $\Pi_{\alpha\beta}^{\mathrm{re}}$, $\Pi_{\alpha\beta}^{\mathrm{lb}}$, $\Pi_{\alpha\beta}^{\mathrm{le}}$, $\alpha,\beta\in\A$.

\begin{description}
\item[$\Pi_{\alpha\beta}^{\mathrm{rb}}$:]
$F'_\beta=F_\alpha\circ F_\beta$; while $F'_\xi=F_\xi$ for $\xi\ne\beta$.

\item[$\Pi_{\alpha\beta}^{\mathrm{re}}$:]
$F'_\alpha=F_\alpha\circ F_\beta$; while $F'_\xi=F_\xi$ for $\xi\ne\alpha$.

\item[$\Pi_{\alpha\beta}^{\mathrm{lb}}$:]
$F'_\beta=F_\alpha\circ F_\beta$; while $F'_\xi=F_\xi$ for $\xi\ne\beta$.

\item[$\Pi_{\alpha\beta}^{\mathrm{le}}$:]
$F'_\alpha=F_\alpha\circ F_\beta$; while $F'_\xi=F_\xi$ for $\xi\ne\alpha$.
\end{description}

\begin{proposition}\label{prop:induction_cc}
Any allowed elementary induction step transforms a given commuting collection into a commuting collection with the same stationary elements and the same sizes of breaks. 
\end{proposition}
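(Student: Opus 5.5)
We need to show that if $\Fect$ satisfies the relations (\ref{eq:commutation}) along the cycles of $\sigma^*$, then $\Fect'=\Pi\Fect$ satisfies the analogous relations along the cycles of $(\sigma')^*$, where $\sigma'=\Pi\sigma$, with the same $\omega_i$ and $\nu_i$. That the stationary elements are preserved and $\sigma'$ is again a rotational scheme of special type is already noted for allowed steps. So the content is an algebraic check on the cycles of the dual scheme. The plan is to first compute how each step acts on $\sigma^*$ via (\ref{eq:duality}), then verify the relations cycle by cycle.

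\textbf{Step 1: action on the dual scheme.} For $\Pi^{\mathrm{rb}}_{\alpha\beta}$, applicable when $\sigma(\alpha\be)=\beta\en$, i.e.\ $\sigma^*(\alpha\en)=\beta\en$, the scheme step moves $\beta\en$ to just before $\alpha\en$ in $\sigma$. Translating through $\sigma^*(\xi\be)=\sigma(\xi\en)$ and $\sigma^*(\xi\en)=\sigma(\xi\be)$, I expect the following effect in $\sigma^*$: the element $\beta\en$ is removed from its place right after $\alpha\en$, and the letter $\beta$ is reinserted in $(\sigma')^*$ so that the dual cycles read with $\alpha\be$ (or $\alpha\en$) immediately followed by $\beta$. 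This is essentially the dual Rauzy--Veech move from \cite{Teplinsky23}, where induction on $\sigma$ corresponds to induction on $\sigma^*$ of the swapped type. I would write this out explicitly for each of the four steps, listing which positions change in the dual cycles in two-row notation.

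\textbf{Step 2: verify the relations cycle by cycle.}
\begin{itemize}
\item Dual cycles containing neither $\alpha$ nor $\beta$ are untouched, and their relations hold trivially.
\item In the affected dual cycle(s), take $\Pi^{\mathrm{rb}}_{\alpha\beta}$ as an example. The ending row of the old relation contains the consecutive factors $F_\beta$ then $F_\alpha$, i.e.\ $\dots\circ F_\beta\circ F_\alpha\circ\dots$ in the appropriate order. After the step, the new relation contains $F'_\beta=F_\alpha\circ F_\beta$ in place of one occurrence, and the standalone $F_\alpha$ (or $F_\beta$) appears on the other row.
\item So the new relation should equal the old one after cancelling or inserting a common factor $F_\alpha$ on one side, using that $\psl$ is a group.
\item When the affected letters lie in two different dual cycles, the factor migrates between the two relations. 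Here I expect to combine the two old relations: substitute one into the other to obtain the two new ones.
\end{itemize}

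\textbf{Step 3: role of the prohibitions.} The adjusted factors $\tilde F_{\omega_i}=F_{\omega_i}\circ\nu_i\Id$ appear only on beginning rows. The prohibited steps are exactly those that would require rewriting $F_{\omega_i}$ or moving $\omega_i\be$ in a way that detaches the $\nu_i\Id$ from $F_{\omega_i}$. For allowed steps, whenever $F'_{\omega_i}=F_{\omega_i}\circ F_\beta$ occurs (as in $\Pi^{\mathrm{rb}}$ with $\beta=\omega_i$, giving $F_\alpha\circ F_{\omega_i}$), the multiplication by $\nu_i\Id$ happens on the right. It therefore commutes with the bookkeeping, since $\tilde F'_{\omega_i}=F_\alpha\circ\tilde F_{\omega_i}$. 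Each relation thus still contains exactly one $\nu_i$, and each dual cycle still contains one stationary element, so the indices $i$ and sizes $\nu_i$ are preserved.

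\textbf{Main obstacle.} The main difficulty is Step 1, namely the correct bookkeeping of how each of the four moves transforms the dual cycles, including the case where $\alpha$ and $\beta$ lie in different dual cycles. Once that combinatorial description is fixed, the algebra is a one-line cancellation in $\psl$. I would handle $\Pi^{\mathrm{rb}}$ in full and argue the others by the left/right and beginning/ending symmetries.
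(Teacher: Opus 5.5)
Your overall plan is the paper's: transport the step to $\sigma^*$ using the time-reversal theorem of \cite{Teplinsky23}, check the dual relations one at a time, and use non-prohibitedness to keep each $\nu_i\Id$ attached. The gap is Step~1, which you yourself call the crux: you have not carried it out, and your predicted outcome is wrong, so the algebra you anticipate from it is wrong too. For $\Pi_{\alpha\beta}^{\mathrm{rb}}$ the cited theorem gives $\sigma^*=\Pi_{\beta\alpha}^{\mathrm{rb}}\sigma'^*$. So in the dual scheme it is $\alpha\en$, not $\beta\en$, that moves: it leaves its place straight before $\beta\en$ and goes to straight after $\beta\be$. Non-ruining positiveness gives $\sigma(\beta\en)=\gamma\en$ with $\gamma\ne\beta$, i.e.\ $\sigma^*(\beta\be)=\gamma\en$. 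Hence $\beta$ is the last top-row letter and $\gamma$ the last bottom-row letter of some dual cycle. In two-row notation, $\alpha$ is deleted from a bottom-row block ``$\beta\ \alpha$'' (which contributes $F_\alpha\circ F_\beta$, not $F_\beta\circ F_\alpha$) and appended after $\gamma$. Consequently no substitution of one relation into another is needed. The relation containing the block is unchanged in value, since $F_\alpha\circ F_\beta$ is merely regrouped as $F'_\beta$. The relation containing $\beta\be$ becomes the old one with $F_\alpha$ composed on the far left of both sides, because $\tilde F'_\beta=F_\alpha\circ\tilde F_\beta$ whether or not $\beta\be$ is stationary. This argument also covers the case where the two dual cycles coincide.

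Your Step~3 states the mechanism backwards. If $F'_{\omega_i}=F_{\omega_i}\circ F_\beta$ ever occurred, you would get $\tilde F'_{\omega_i}=F_{\omega_i}\circ F_\beta\circ\nu_i\Id$, which in general differs from the required $\tilde F_{\omega_i}\circ F_\beta$. That is exactly what prohibiting $\Pi^{\mathrm{re}}_{\omega_i\beta}$ and $\Pi^{\mathrm{le}}_{\omega_i\beta}$ excludes. Allowed steps change a stationary function only by left composition, $F'_{\omega_i}=F_\alpha\circ F_{\omega_i}$ (steps $\Pi^{\mathrm{rb}}_{\alpha\omega_i}$, $\Pi^{\mathrm{lb}}_{\alpha\omega_i}$), and that is where your correct identity $\tilde F'_{\omega_i}=F_\alpha\circ\tilde F_{\omega_i}$ belongs. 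In the remaining cases the affected relation acquires a common factor: $F_\beta$ on the right for $\Pi^{\mathrm{re}}_{\alpha\beta}$ and $\Pi^{\mathrm{le}}_{\alpha\beta}$, and $F_\alpha$ on the left for $\Pi^{\mathrm{lb}}_{\alpha\beta}$. In the lb and le cases the merged pair $\tilde F_\alpha\circ\tilde F_\beta$ sits on the beginning row. All of this is consistent only because non-prohibitedness gives $\tilde F_\alpha=F_\alpha$ and $\tilde F'_\alpha=F'_\alpha$ for re, lb and le, and additionally $\tilde F_\beta=F_\beta$ and $\tilde F'_\beta=F'_\beta$ for le. Once Step~1 is carried out correctly in this way, your outline becomes the paper's proof.
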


\begin{proof}
Consider the case of a step $\Pi_{\alpha\beta}^{\mathrm{rb}}$. The condition $\sigma(\alpha\be)=\beta\en$ of that step being defined on the scheme $\sigma$ together with the definition of duality imply the equality $\sigma^*(\alpha\en)=\beta\en$ for the dual scheme $\sigma^*$. Therefore one of the cycles in it has the form
$\left[\begin{array}{ccc}
\dots & \dots & \dots \\
\dots & \beta\quad \alpha & \dots                                                                                                                                           \end{array}\right]$ (in two-row notation), and accordingly to this the product $F_\alpha\circ F_\beta$ is included in the right-hand side of one of the commutation relations (\ref{eq:commutation}). Next, the action of the step $\Pi_{\alpha\beta}^{\mathrm{rb}}$ on the scheme $\sigma$ moves the element $\beta\en$ from its current position into the position straight before $\alpha\en$ (see Subsection~\ref{subsect:induction}), therefore if an element $\beta\en$ was followed by a certain beginning element $\gamma\be=\sigma(\beta\en)$, $\gamma\in\A$, then $\beta\en$ would be the only ending element in this cycle, and after application of this step there would be not a single ending element left, so the scheme $\sigma'=\Pi_{\alpha\beta}^{\mathrm{rb}}\sigma$ would cease to be positive. Since the step $\Pi_{\alpha\beta}^{\mathrm{rb}}$ is allowed by assumption, it is not, in particular, ruining positiveness, therefore the element $\beta\en$ is followed by some ending element $\gamma\en=\sigma(\beta\en)$, $\gamma\in\A$, $\gamma\ne\beta$. For the dual scheme $\sigma^*$ this means that $\sigma^*(\beta\be)=\gamma\en$, therefore one of the cycles in it has the form 
$\left[\begin{array}{cc}
\dots & \beta \\
\dots & \gamma \end{array}\right]$ (in two-row notation), and, accordingly to this, one of the commutation relations (\ref{eq:commutation}) has the form $\check F_\beta\circ\dots=F_\gamma\circ\dots$, thus $\check F_\beta$ stands at the first position in the left-hand side of that relation. (Notice, that the two mentioned cycles $\left[\begin{array}{ccc}
\dots & \dots & \dots \\
\dots & \beta\quad \alpha & \dots                                                                                                                                           \end{array}\right]$, $\left[\begin{array}{cc}
\dots & \beta \\
\dots & \gamma \end{array}\right]$ in $\sigma^*$ can coincide.) Accordingly to Theorem~1 of~\cite{Teplinsky23} (which we conveniently cite below in Section~\ref{sect:symmetry}), $\sigma^*=\Pi_{\beta\alpha}^{\mathrm{rb}}\sigma'^*$, where $\sigma'^*$ denotes a rotational scheme of special type that is dual to $\sigma'=\Pi_{\alpha\beta}^{\mathrm{rb}}\sigma$. The scheme $\sigma'^*$ differs from the scheme $\sigma^*$ just by the position of the element $\alpha\en$ in two highlighted cycles, namely: for $\sigma'^*$ they have the form $\left[\begin{array}{ccc}
\dots & \dots & \dots \\
\dots & \beta & \dots                                                                                                                                           \end{array}\right]$, $\left[\begin{array}{cc}
\dots & \beta \\
\dots & \gamma\quad \alpha \end{array}\right]$. Accordingly, the commutation relations for the scheme $\sigma'^*$ differ from the relations for $\sigma^*$ by the following: $F'_\beta$ now stands at the place of the product $F_\alpha\circ F_\beta$, and the relation $\check F_\beta\circ\dots=F_\gamma\circ\dots$ is replaced by $\check F'_\beta\circ\dots=F'_\alpha\circ F'_\gamma\circ\dots$. It is easy to see that the collection $\Fect'=\Pi_{\alpha\beta}^{\mathrm{rb}}\Fect$: $F'_\beta=F_\alpha\circ F_\beta$, $F'_\xi=F_\xi$ for $\xi\ne\beta$ automatically satisfies the new relations, since in this case we have $\check F'_\beta=F_\alpha\circ\check F_\beta$ independently on whether $\beta\be$ is a stationary element or not. Therefore, $\Fect'=\Pi_{\alpha\beta}^{\mathrm{rb}}\Fect$ is indeed a commuting collection satisfying the commutation relations of the form (\ref{eq:commutation}) for the rotational scheme of special type $\sigma'=\Pi_{\alpha\beta}^{\mathrm{rb}}\sigma$, with the same stationary elements $\omega_i\be$, $1\le i\le N$, and the same sizes of breaks $\nu_i$, $1\le i\le N$.

For other steps the statement is proved similarly, additionally taking in consideration the equalities $\check F_\alpha=F_\alpha$, $\check F'_\alpha=F'_\alpha$ for all three steps $\Pi_{\alpha\beta}^{\mathrm{re}}$, $\Pi_{\alpha\beta}^{\mathrm{lb}}$, $\Pi_{\alpha\beta}^{\mathrm{le}}$, and also the equalities $\check F_\beta=F_\beta$, $\check F'_\beta=F'_\beta$ for the step $\Pi_{\alpha\beta}^{\mathrm{le}}$, which are all implied by the assumption that the steps are not prohibited.

Proposition is proved
\end{proof}

\subsection{On the possibility of non-existence of an allowed step for a given interval exchange transformation}
\label{subsect:counterexample}

We would be happy to claim that for a dynamical system of linear-fractional interval exchange transformation corresponding to a certain commuting collection there always exists an elementa\-ry induction step that is at the same time applicable to it in the sense of Subsection~\ref{subsect:induction} and allowed in the sense of Subsection~\ref{subsect:prohibited}, but this statement is wrong, which is shown by the following couter-example.

\subsubsection*{Example}

For $N=2$, $\A=\{\omega_1,\omega_2,\alpha, \beta\}$, consider the following rotational scheme of special type with stationary elements $\omega_1\be$ and $\omega_2\be$: 
$$
\sigma=\{(\omega_1\be,\alpha\en),(\beta\be,\alpha\be,\omega_2\be, \omega_1\en,\beta\en,\omega_2\en)\}=\left\{
\left[\begin{array}{c}
\omega_1 \\
\alpha \end{array}\right],
\left[\begin{array}{ccc}
\beta & \alpha & \omega_2\\
\omega_2 & \beta & \omega_1 \end{array}\right]
\right\}.
$$
It is easy to check that this is indeed a rotational scheme of special type by its definition in Subsection~\ref{subsect:commuting_collections}. The dual scheme by its definition in Subsection~\ref{subsect:definitions} is
$$
\sigma^*=\{(\alpha\be,\omega_1\be,\beta\en),(\omega_2\be,\beta\be,\omega_2\en,\omega_1\en,\alpha\en)\}=\left\{
\left[\begin{array}{c}
\alpha\quad\omega_1 \\
\beta \end{array}\right],
\left[\begin{array}{c}
\omega_2\quad\beta\\
\alpha\quad\omega_1\quad\omega_2 \end{array}\right]
\right\}.
$$

For this scheme $\sigma$, exactly four elementary induction steps are defined, namely $\Pi_{\beta\omega_2}^{\mathrm{lb}}$, $\Pi_{\beta\omega_2}^{\mathrm{le}}$, $\Pi_{\omega_2\omega_1}^{\mathrm{rb}}$, $\Pi_{\omega_2\omega_1}^{\mathrm{re}}$ (see Subsection~\ref{subsect:induction}), and only two of them, namely $\Pi_{\beta\omega_2}^{\mathrm{lb}}$ and $\Pi_{\omega_2\omega_1}^{\mathrm{rb}}$, are not prohibited (see Subsection~\ref{subsect:prohibited}).

Consider a (linear) interval exchange transformation $(\sigma,\xect)$ (see Subsection~\ref{subsect:definitions}) with the following vector of endpoints $\xect$:
\begin{gather*}
x_{\omega_1\be}=0,\quad x_{\omega_2\be}=0,\quad x_{\alpha\be}=-3,\quad x_{\beta\be}=-4,\\
x_{\omega_1\en}=2,\quad x_{\omega_2\en}=-2,\quad x_{\alpha\en}=3,\quad x_{\beta\en}=-1.
\end{gather*}
It is easy to check that the components of the vector $\xect$ do satisfy the relations (\ref{eq:endpoints_relation}), and also to calculate the corresponding vector of lengths $\vect$:
$$
v_{\omega_1}=3,\quad v_{\omega_2}=2,\quad v_{\alpha}=3,\quad v_{\beta}=1.
$$

Let us look at this interval exchange transformation as at a nonlinear one $(\sigma,\xect,\Fect)$ (see Subsection.~\ref{subsect:commuting_collections}) that corresponds to the commuting collection $\Fect$ (with sizes of breaks $\nu_1=\nu_2=1$) of linear algebraic functions, elements of $\psl$:
$$
F_{\omega_1}=\left(\begin{array}{cc}
1 & -1\\
0 & 1 \end{array}\right),\quad F_{\omega_2}=\left(\begin{array}{cc}
1 & -4\\
0 & 1 \end{array}\right),\quad F_{\alpha}=\left(\begin{array}{cc}
1 & 3\\
0 & 1 \end{array}\right),\quad F_{\beta}=\left(\begin{array}{cc}
1 & 3\\
0 & 1 \end{array}\right).
$$

Since $v_{\beta}<v_{\omega_2}$, while $v_{\omega_2}<v_{\omega_1}$, the both allowed steps $\Pi_{\beta\omega_2}^{\mathrm{lb}}$, $\Pi_{\omega_2\omega_1}^{\mathrm{rb}}$ are not appicable (see Subsection~\ref{subsect:induction}) to the given interval exchange transformation. And therefore, for the given commuting collection $\Fect$, to which the interval exchange transformation $(\sigma,\xect,\Fect)$ corresponds, not a single elementary induction step is at the same time allowed and applicable.

(The example we presented above considers a linear interval exchange transformation, however by a small perturbation in the space of commuting collections it obviously can be transformed into a nonlinear one, and with non-unit sizes of breaks.)

\section{Duality for commuting collections}
\label{sect:duality}

Let a rotational scheme of special type $\sigma$ and sizes of breaks
$\nu_1,\dots,\nu_N$ be given. In this section we present an algorithm which,
for a given commuting collection of algebraic linear-fractional functions
$\Fect=(F_\alpha)_{\alpha\in\A}$, considered as elements of $\psl$ and
satisfying the commutation relations (\ref{eq:commutation}), constructs the
dual commuting collection $\Fect^*=(F^*_\alpha)_{\alpha\in\A}$, and describe
their mutual properties. We also define auxiliary arrays of
\emph{nodal} linear-fractional functions
$\Phict=(\Phi_{p\alpha})_{1\le p\le4,\alpha\in\A}$, which are in one-to-one
correspondence with commuting collections $\Fect=(F_\alpha)_{\alpha\in\A}$,
and their dual arrays.

\subsection{Patchwork surface of the natural extension}
\label{subsect:patchwork_surface}

For the most visual construction, we use a surface glued from rectangles
whose sides correspond to the rearranging intervals of a given positive
natural extension of a linear IRE with scheme $\sigma$. The general
construction of this surface was described by us in Sect.~10 of
\cite{Teplinsky23} (it generalizes Veech's classical `zipped rectangles'
construction~\cite{Veech82}), but for rotational interval exchanges both the
construction algorithm and its result are considerably simpler, so we
describe them here, calling the result a `patchwork surface'.

Take an arbitrary positive natural extension of an IRE with scheme $\sigma$,
that is, a pair of rotational linear interval exchanges
$[(\sigma,\uect),(\sigma^*,\yect)]$ (see Sect.~9 of \cite{Teplinsky23}).
Since we need this surface only to visualize the construction of the dual
commuting collection, the endpoint vectors
$\uect,\yect\in\R^{\bar\A}$ may be chosen arbitrarily among those allowed by
the schemes $\sigma,\sigma^*$, respectively, provided that the corresponding
length vectors $\vect,\wect\in\R^\A$, constructed according to
(\ref{eq:endpoints_relation}), are positive. Such endpoint vectors always
exist, since $\sigma$ and $\sigma^*$ are positive by definition of a
rotational scheme. We imagine the interval exchange $(\sigma,\uect)$ lying
on the abscissa axis (horizontally), and $(\sigma^*,\yect)$ on the ordinate
axis (vertically). For every $\alpha\in\A$, take a (half-open) rectangle
$R_\alpha$ of horizontal size
$v_\alpha=u_{\sigma(\alpha\be)}-u_{\alpha\be}
=u_{\alpha\en}-u_{\sigma(\alpha\en)}$
and vertical size
$w_\alpha=y_{\sigma(\alpha\be)}-y_{\alpha\be}
=y_{\alpha\en}-y_{\sigma(\alpha\en)}$, and identify its lower side with the
beginning interval $[u_{\alpha\be},u_{\sigma(\alpha\be)})$ of
$(\sigma,\uect)$, its upper side with the ending interval
$[u_{\sigma(\alpha\en)},u_{\alpha\en})$, its left side with the beginning
interval $[y_{\alpha\be},y_{\sigma^*(\alpha\be)})$ of
$(\sigma^*,\yect)$, and its right side with the ending interval
$[y_{\sigma^*(\alpha\en)},y_{\alpha\en})$ of $(\sigma^*,\yect)$. (Recall that
$\sigma^*(\alpha\be)=\sigma(\alpha\en)$ and
$\sigma^*(\alpha\en)=\sigma(\alpha\be)$ by definition of the dual scheme.)

As a result of these identifications, the $2N$ rectangles $R_\alpha$,
$\alpha\in\A$, are glued into a geometrically flat toroidal surface without
gaps or overlaps. The vertices of these rectangles form $4N$
\emph{nodal points} (or, briefly, \emph{nodes}), at each of which the
coordinates $u_{\bar\alpha}$ and $y_{\bar\alpha}$ from the interval exchanges
$(\sigma,\uect)$ and $(\sigma^*,\yect)$, $\bar\alpha\in\bar\A$, are glued
together.

The figure below shows an example of a patchwork surface for the rotational
scheme of special type
$\sigma=\left\{
\left[\begin{array}{cc}
\alpha &\omega_1\\
\omega_2&\beta
\end{array}\right],
\left[\begin{array}{cc}
\beta&\omega_2\\
\omega_1&\alpha
\end{array}\right]\right\}$
for $\A=\{\omega_1,\omega_2,\alpha,\beta\}$, $N=2$. Its dual scheme is
$\sigma^*=\left\{
\left[\begin{array}{cc}
\omega_1&\beta\\
\alpha&\omega_2
\end{array}\right],
\left[\begin{array}{cc}
\omega_2&\alpha\\
\beta&\omega_1
\end{array}\right]\right\}$.
All rectangular patches marked identically in the figure are in fact the
same rectangle; there are four rectangles in this surface:
$R_{\omega_1}$, $R_{\omega_2}$, $R_\alpha$, $R_\beta$, corresponding to the
symbols of $\A$.

\vspace{20pt}

\tikzset{every picture/.style={line width=0.75pt}} 

\begin{tikzpicture}[x=0.75pt,y=0.75pt,yscale=-1,xscale=1]

\draw  [color={rgb, 255:red, 255; green, 255; blue, 255 }  ,draw opacity=1 ][fill={rgb, 255:red, 0; green, 0; blue, 0 }  ,fill opacity=0.16 ][line width=0.75]  (40,50) -- (440,50) -- (440,290) -- (40,290) -- cycle ;
\draw    (70,51) -- (70,160) ;
\draw    (120,100) -- (120,240) ;
\draw    (70,100) -- (250,100) ;
\draw    (140,51) -- (140,100) ;
\draw    (140,60) -- (320,60) ;
\draw    (40,160) -- (120,160) ;
\draw    (40,200) -- (50,200) ;
\draw    (50,160) -- (50,290) ;
\draw    (100,240) -- (100,290) ;
\draw    (50,240) -- (230,240) ;
\draw    (120,200) -- (300,200) ;
\draw    (250,60) -- (250,200) ;
\draw    (320,50) -- (320,140) ;
\draw    (250,140) -- (430,140) ;
\draw    (320,100) -- (440,100) ;
\draw    (270,50) -- (270,60) ;
\draw    (430,100) -- (430,240) ;
\draw    (300,140) -- (300,280) ;
\draw    (230,200) -- (230,290) ;
\draw    (230,280) -- (410,280) ;
\draw    (280,280) -- (280,290) ;
\draw    (410,240) -- (410,290) ;
\draw    (300,240) -- (440,240) ;

\draw (171,142.4) node [anchor=north west][inner sep=0.75pt]    {$R_{\beta }$};
\draw (351,182.4) node [anchor=north west][inner sep=0.75pt]    {$R_{\beta }$};
\draw (71,192.4) node [anchor=north west][inner sep=0.75pt]    {$R_{\alpha }$};
\draw (271,92.4) node [anchor=north west][inner sep=0.75pt]    {$R_{\alpha }$};
\draw (251,232.4) node [anchor=north west][inner sep=0.75pt]    {$R_{\alpha }$};
\draw (151,262.4) node [anchor=north west][inner sep=0.75pt]    {$R_{\beta }$};
\draw (371,62.4) node [anchor=north west][inner sep=0.75pt]    {$R_{\beta }$};
\draw (81,122.4) node [anchor=north west][inner sep=0.75pt]    {$R_{\omega _{2}}$};
\draw (260,162.4) node [anchor=north west][inner sep=0.75pt]    {$R_{\omega _{2}}$};
\draw (61,262.4) node [anchor=north west][inner sep=0.75pt]    {$R_{\omega _{2}}$};
\draw (91,62.4) node [anchor=north west][inner sep=0.75pt]    {$R_{\alpha }$};
\draw (181,72.4) node [anchor=north west][inner sep=0.75pt]    {$R_{\omega _{1}}$};
\draw (161,212.4) node [anchor=north west][inner sep=0.75pt]    {$R_{\omega _{1}}$};
\draw (361,112.4) node [anchor=north west][inner sep=0.75pt]    {$R_{\omega _{1}}$};
\draw (341,252.4) node [anchor=north west][inner sep=0.75pt]    {$R_{\omega _{1}}$};

\end{tikzpicture}

\vspace{20pt}

In Sect.~4.2 of \cite{Teplinsky24} we classified the endpoints of the intervals
for a given scheme $\sigma$ into four types: an endpoint labeled
$\alpha\be$, $\alpha\in\A$, has type L (`left') if
$\sigma^{-1}(\alpha\be)=\beta\en$, and type MB (`middle beginning') if
$\sigma^{-1}(\alpha\be)=\beta\be$, $\beta\in\A$; an endpoint labeled
$\alpha\en$, $\alpha\in\A$, has type R (`right') if
$\sigma^{-1}(\alpha\en)=\beta\be$, and type ME (`middle ending') if
$\sigma^{-1}(\alpha\en)=\beta\en$, $\beta\in\A$.
Clearly, this is a classification of the labels themselves, independent of
the coordinates of the corresponding endpoints, but it is more convenient to
refer to the endpoints: for every interval exchange segment (corresponding to
one cycle of the permutation $\sigma$), its left endpoint has type L, its
right endpoint has type R, all beginning endpoints inside the segment, if any,
have type MB, and all ending endpoints inside it, if any, have type ME. We shall
use the same terminology for the nodes of the patchwork surface. By the
definition of a rotational scheme of special type, among these $4N$
nodes there are exactly $N$ nodes of each of the four types. (Note that, by
the definition of duality, labels of types L, R, MB, ME in the scheme
$\sigma$ are labels of types MB, ME, L, R, respectively, in $\sigma^*$, and
vice versa.)

The $2N$ rectangles have $8N$ right angles, and at each of the $4N$ nodes of
the constructed toroidal surface two particular rectangle corners meet:
at a node $\alpha\be$ of type L, the lower-left corner of $R_\alpha$ and the
upper-left corner of $R_\beta$, where
$\beta\en=\sigma^{-1}(\alpha\be)$, meet, and these two right angles are glued
along the lower side of $R_\alpha$ and the upper side of $R_\beta$; at a
node $\alpha\be$ of type MB, the lower-left corner of $R_\alpha$ and the
lower-right corner of $R_\beta$, where
$\beta\be=\sigma^{-1}(\alpha\be)$, meet, and these are glued along the left side
of $R_\alpha$ and the right side of $R_\beta$; at a node $\alpha\en$ of type
R, the upper-right corner of $R_\alpha$ and the lower-right corner of
$R_\beta$, where $\beta\be=\sigma^{-1}(\alpha\en)$, meet, and these are glued
along the upper side of $R_\alpha$ and the lower side of $R_\beta$; at a
node $\alpha\en$ of type ME, the upper-right corner of $R_\alpha$ and the
upper-left corner of $R_\beta$, where
$\beta\en=\sigma^{-1}(\alpha\en)$, meet, and these are glued along the right side
of $R_\alpha$ and the left side of $R_\beta$. We call the listed pairs of
rectangle corners that meet at the nodes \emph{adjacent}. (Note that every
two adjacent right angles together form a straight angle which, in generic case, is completed to a full angle by another rectangle glued to the
given node along its side.)

\subsection{Array of nodal linear-fractional functions}
\label{subsect:Phi}

For a given commuting collection, define the array
$\Phict=(\Phi_{p\alpha})_{1\le p\le4,\alpha\in\A}$ of $8N$ algebraic
linear-fractional functions
$\Phi_{1\alpha},\Phi_{2\alpha},\Phi_{3\alpha},\Phi_{4\alpha}\in\psl$,
which we place at the lower-left, lower-right, upper-right, and upper-left
corners, respectively, of the rectangles $R_\alpha$, $\alpha\in\A$,
according to the following rules.

Rule 1): for every stationary element $\omega_i\be\in\bar\A$,
$1\le i\le N$, the function $\Phi_{1\omega_i}=\nu_i\Id$ is placed at the
lower-left corner of $R_{\omega_i}$, while the function $\Id$ is placed at
the adjacent corner.

Rule 2): equal functions are placed at two adjacent corners not covered by
Rule~1).

Rule 3): functions placed at the lower corners of $R_\alpha$ are transformed into
the functions placed at its corresponding upper corners under the action of
$F_\alpha$, i.e.
$\Phi_{4\alpha}=F_\alpha\circ\Phi_{1\alpha}$,
$\Phi_{3\alpha}=F_\alpha\circ\Phi_{2\alpha}$, $\alpha\in\A$.

We now show how to fill algorithmically all $8N$ corners of the rectangles
of the patchwork surface with functions.
We go through all cycles of the dual scheme $\sigma^*$, one by one. Each such
cycle contains exactly one stationary element, by the definition of a
rotational scheme of special type, and we start from it, placing the
functions at the two adjacent corners meeting at the corresponding node
according to Rule~1). We then move along the chosen cycle in either
direction and place new functions at the corresponding corners
using Rules~3) and~2) alternately. The commutation relations
(\ref{eq:commutation}) guarantee that, when the cycle closes, the same
function is placed at the corresponding corner whether we traverse the
cycle in the forward or backward direction.

More formally, let $\sigma^*=\{z_1,\dots,z_N\}$, and consider the cycle
$$
z_i=\left[\begin{array}{ccc}
\gamma_1&\dots&\gamma_s\\
\delta_1&\dots&\delta_t
\end{array}\right]
=(\gamma_1\be,\dots,\gamma_s\be,\delta_t\en,\dots,\delta_1\en),
\quad 1\le i\le N
$$
(see Subsection~\ref{subsect:commuting_collections}, where the structure of
rotational schemes of special type is described; since the cycle under
consideration is fixed, we omit the subscript $i$ from $\gamma,\delta,s,t$ for
simplicity). Assume that the stationary element in this cycle is
$\gamma_k\be$, i.e.\ $\omega_i=\gamma_k$, $1\le k\le s$. The cycle $z_i$
corresponds on the surface glued from the rectangles to a vertical segment
consisting, on one side, of the left sides of
$R_{\gamma_1},\dots,R_{\gamma_s}$ in succession from bottom to top, and,
on the other side, of the right sides of
$R_{\delta_1},\dots,R_{\delta_t}$, likewise from bottom to top. The left
corners of the first sequence and the right corners of the second one form a
closed chain of $2s+2t$ corners, and the functions at each pair of neighboring
corners (either adjacent corners or corners of the same rectangle) are
related by one of Rules~1)--3). These $2s+2t$ functions can, for example,
be determined by the following algorithm.

First we place $\Phi_{1\gamma_k}=\nu_i\Id$ at the lower-left corner of
$R_{\gamma_k}$ according to Rule~1). Next we place
$\Phi_{4\gamma_k}=F_{\gamma_k}\circ\Phi_{1\gamma_k}
=F_{\gamma_k}\circ\nu_i\Id$ at its upper-left corner according to Rule~3).
Then we place
$\Phi_{1\gamma_{k+1}}=\Phi_{4\gamma_k}
=F_{\gamma_k}\circ\nu_i\Id$ at the adjacent lower-left corner of
$R_{\gamma_{k+1}}$ according to Rule~2). Continue to move upward like this 
using Rules`2) and~3) alternately and filling the left corners of
$R_{\gamma_k},\dots,R_{\gamma_s}$ successively---until we reach the upper endpoint of the vertical
segment corresponding to $z_i$. The last function written is
$\Phi_{4\gamma_s}
=F_{\gamma_s}\circ\dots\circ F_{\gamma_k}\circ\nu_i\Id$ at the upper-left
corner of $R_{\gamma_s}$. If $k>1$, then we fill the left corners of
$R_{\gamma_1},\dots,R_{\gamma_{k-1}}$ in the similar way, but now moving downward:
$\Phi_{4\gamma_{k-1}}=\Id$ by Rule~1),
$\Phi_{1\gamma_{k-1}}=F^{-1}_{\gamma_{k-1}}$ by Rule~3),
$\Phi_{4\gamma_{k-2}}=F^{-1}_{\gamma_{k-1}}$ by Rule~2), and so on,
alternating Rules~2) and~3), until
$\Phi_{1\gamma_1}=F^{-1}_{\gamma_1}\circ\dots\circ
F^{-1}_{\gamma_{k-1}}$ is placed.

Next we fill the right corners of $R_{\delta_1},\dots,R_{\delta_t}$,
moving upward from the lower endpoint of the vertical segment. First we place
$\Phi_{2\delta_1}=\Phi_{1\gamma_1}
=F^{-1}_{\gamma_1}\circ\dots\circ F^{-1}_{\gamma_{k-1}}$ by Rule~2) if
$k>1$, and $\Phi_{2\delta_1}=\Id$ by Rule~1) if $k=1$. (The first formula
is also valid for $k=1$, since the empty composition is $\Id$.) Then, by
alternating Rules~2) and~3) again, we place the remaining functions at
the right corners up to
$\Phi_{3\delta_t}
=F_{\delta_t}\circ\dots\circ F_{\delta_1}\circ
F^{-1}_{\gamma_1}\circ\dots\circ F^{-1}_{\gamma_{k-1}}$.

Thus all $2s+2t$ linear-fractional functions corresponding to the chosen
cycle $z_i$ in $\sigma^*$ have been determined:

\noindent $\Phi_{1\gamma_{1}}=F^{-1}_{\gamma_{1}}\circ F^{-1}_{\gamma_{2}}\circ \dots\circ F^{-1}_{\gamma_{k-1}}$,  $\Phi_{4\gamma_{1}}=\Phi_{1\gamma_{2}}=F^{-1}_{\gamma_{2}}\circ\dots\circ F^{-1}_{\gamma_{k-1}}$, \dots, $\Phi_{4\gamma_{k-1}}=\Id$;
$\Phi_{1\gamma_{k}}=\nu_i\Id$, $\Phi_{4\gamma_{k}}=\Phi_{1\gamma_{k+1}}=F_{\gamma_{k}}\circ\nu_i\Id$, \dots, $\Phi_{4\gamma_{s}}=F_{\gamma_{s}}\circ\dots\circ F_{\gamma_{k}}\circ\nu_i\Id$; 
$\Phi_{2\delta_1}=F^{-1}_{\gamma_{1}}\circ\dots\circ F^{-1}_{\gamma_{k-1}}$, $\Phi_{3\delta_1}=\Phi_{2\delta_2}=F_{\delta_1}\circ F^{-1}_{\gamma_{1}}\circ\dots\circ F^{-1}_{\gamma_{k-1}}$, \dots, $\Phi_{3\delta_t}=F_{\delta_t}\circ\dots F_{\delta_1}\circ F^{-1}_{\gamma_{1}}\circ\dots\circ F^{-1}_{\gamma_{k-1}}$.

\noindent
(Of course, these formulas could be written down straight away, but we believe that the visual algorithm described above explains the very logic of their construction.)

It is easy to see that the functions
$\Phi_{4\gamma_s}=F_{\gamma_s}\circ\dots\circ F_{\gamma_k}\circ\nu_i\Id$
and
$\Phi_{3\delta_t}=F_{\delta_t}\circ\dots\circ F_{\delta_1}\circ
F^{-1}_{\gamma_1}\circ\dots\circ F^{-1}_{\gamma_{k-1}}$, written at the
adjacent corners near the upper endpoint of the vertical segment corresponding
to $z_i$, are equal by the commutation relations (\ref{eq:commutation});
hence the algorithm is consistent. Applying it successively to every cycle
in $\sigma^*$ produces, in a constructive way, all $8N$ functions
$\Phi_{p\alpha}\in\psl$, $1\le p\le4$, $\alpha\in\A$, satisfying Rules~
1)--3). Since Rule~1) fixes the functions $\Phi_{1\omega_i}$,
$1\le i\le N$, the array $\Phict$ is unique.

The argument above also works in the opposite direction: if Rules~1)--3) hold for an array $\Phict$ and some collection $\Fect$ of algebraic
linear-fractional functions, then the latter satisfies the commutation
relations (\ref{eq:commutation}). Since the functions in $\Phict$
correspond to the nodal points of the patchwork surface (two functions per
node), we call them \emph{nodal functions}. Thus we have constructively
proved the following statement (with all components of $\Fect$ and
$\Phict$ seen as elements of $\psl$).

\begin{proposition}\label{prop:Phi&F}
For a given commuting collection $\Fect$ satisfying (\ref{eq:commutation}),
there exists a unique array of nodal functions $\Phict$ such that the
functions in $\Phict$ and $\Fect$ satisfy Rules~1)--3).
If the functions in a given array $\Phict$ and a collection $\Fect$ satisfy
Rules~1)--3), then $\Fect$ is a commuting collection satisfying
(\ref{eq:commutation}).
\end{proposition}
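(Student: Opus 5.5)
The plan is to organize the proof cycle by cycle over the dual scheme $\sigma^*=\{z_1,\dots,z_N\}$. The key observation is that Rules~1)--3) never relate corners belonging to different cycles of $\sigma^*$. Rule~3) links the lower-left and upper-left corners of $R_\alpha$, which lie on the vertical segment of the cycle containing $\alpha\be$. It likewise links the lower-right and upper-right corners, which lie on the segment of the cycle containing $\alpha\en$. Rule~2) links adjacent corners at a node, and these lie on a common vertical segment: for types L and R, both corners sit on the left/right sides meeting at the ends of that segment; for types MB and ME, they are glued along a vertical side. So I will first check, from the description of adjacent corners in Subsection~\ref{subsect:patchwork_surface}, that the $2s+2t$ corners attached to $z_i$ form exactly one closed chain. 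Along this chain, consecutive corners are related by Rule~2) or Rule~3), except at the single node $\omega_i\be$, where Rule~1) applies instead of Rule~2). Every one of the $8N$ corners lies in exactly one such chain.

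\textbf{Existence and uniqueness.} Fix the chain for $z_i=(\gamma_1\be,\dots,\gamma_s\be,\delta_t\en,\dots,\delta_1\en)$ with $\gamma_k=\omega_i$. Rule~1) fixes two values at the stationary node. Every other link in the chain is an equality (Rule~2) or left multiplication by $F_\alpha^{\pm1}$ (Rule~3), so each value propagates uniquely along the chain. Traversing in both directions from the stationary node therefore forces the explicit formulas listed above: $\Phi_{1\gamma_j}$ and $\Phi_{4\gamma_j}$ on the left side, and $\Phi_{2\delta_j}$ and $\Phi_{3\delta_j}$ on the right side. Hence there is at most one array. The only link not yet used is the Rule~2) equality at the top node, $\Phi_{4\gamma_s}=\Phi_{3\delta_t}$, that is,
$$F_{\gamma_s}\circ\dots\circ F_{\gamma_k}\circ\nu_i\Id=F_{\delta_t}\circ\dots\circ F_{\delta_1}\circ F^{-1}_{\gamma_1}\circ\dots\circ F^{-1}_{\gamma_{k-1}}.$$
Right-multiplying by $F_{\gamma_{k-1}}\circ\dots\circ F_{\gamma_1}$ turns this into exactly the $i$th relation of (\ref{eq:commutation}), with $\tilde F_{\gamma_k}=F_{\gamma_k}\circ\nu_i\Id$. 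So the relation holds, and the array exists. When $k=1$, the bottom node is itself the stationary node, and the same identity with an empty product applies.

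\textbf{Converse.} Suppose $\Phict$ and $\Fect$ satisfy Rules~1)--3). Running the same propagation from the stationary node of $z_i$ shows that the entries of $\Phict$ must equal the formulas above. The top-node Rule~2) equality then gives the displayed identity, and hence the $i$th relation (\ref{eq:commutation}), for every $i$.

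\textbf{Main obstacle.} The delicate step is the combinatorial claim that the chain closes up as described. This means checking the following for each node type. At the bottom node of $z_i$ (type L in $\sigma^*$, i.e.\ MB in $\sigma$), the lower-left corner of $R_{\gamma_1}$ is adjacent to the lower-right corner of $R_{\delta_1}$. At the top node (type R in $\sigma^*$, i.e.\ ME in $\sigma$), the upper-left corner of $R_{\gamma_s}$ is adjacent to the upper-right corner of $R_{\delta_t}$. Internal nodes pair corners of consecutive rectangles on the same side. Finally, the stationary node's Rule~1) pair is $\Phi_{1\gamma_k}$ together with $\Phi_{4\gamma_{k-1}}$, or together with $\Phi_{2\delta_1}$ if $k=1$. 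I would verify all of this directly from $\sigma^*(\alpha\be)=\sigma(\alpha\en)$ and $\sigma^*(\alpha\en)=\sigma(\alpha\be)$ together with the type correspondence L, R, MB, ME $\leftrightarrow$ MB, ME, L, R noted earlier.
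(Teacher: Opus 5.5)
Your proposal is correct and follows essentially the paper's own argument. You propagate values from the Rule~1) pair along the closed chain of $2s+2t$ corners attached to each cycle $z_i$ of $\sigma^*$, and observe that the only unused link, the Rule~2) equality $\Phi_{4\gamma_s}=\Phi_{3\delta_t}$ at the top node, is equivalent to the $i$th commutation relation; this yields existence, uniqueness and the converse at once. Your explicit check of how the chain closes is also correct, and it makes precise what the paper asserts without verification: the bottom node is of type MB in $\sigma$, the top node is of type ME, and the Rule~1) pair is $\Phi_{1\gamma_k}$ with $\Phi_{4\gamma_{k-1}}$, or with $\Phi_{2\delta_1}$ when $k=1$.
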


Effectively, we have shown that commuting collections and arrays of nodal
functions are equivalent mathematical objects, being in one-to-one
correspondence: a given collection $\Fect$ can be algorithmically converted
into an array $\Phict$, and vice versa, for a given rotational scheme of
special type with a given set of stationary elements and given sizes of
breaks.

\subsection{Dual surface, dual array of nodal functions, and dual commuting collection}
\label{subsect:dual_surface}

The natural extension dual to $[(\sigma,\uect),(\sigma^*,\yect)]$ is naturally $[(\sigma^*,\yect),(\sigma,\uect)]$, and
the dual rectangles $R^*_\alpha$, $\alpha\in\A$, are the same rectangles
$R_\alpha$, $\alpha\in\A$, respectively, whose horizontal sides became vertical and vertical sides became horizontal. The dual rectangles are glued along their sides into a toroidal surface in exactly the same way as the original ones. Thus
both the rectangular patches and the surface formed by them for the
dual natural extension $[(\sigma^*,\yect),(\sigma,\uect)]$ are obtained from
the patchwork surface for the original natural extension
$[(\sigma,\uect),(\sigma^*,\yect)]$ by reflection across the
\emph{identity line} $y=u$. (Recall that in this construction the sizes of
the rectangles are irrelevant for us; only their mutual arrangement on the
toroidal surface matters.)

Let a commuting collection $\Fect$ satisfying (\ref{eq:commutation}) be
given. By Proposition~\ref{prop:Phi&F}, there exists an array $\Phict$,
constructed according to the algorithm described in
Subsection~\ref{subsect:Phi}, whose functions satisfy Rules 1)--3).

Define the \emph{dual} to $\Phict=(\Phi_{p\alpha})_{1\le p\le4,\alpha\in\A}$ array of $8N$ functions
$\Phict^*=(\Phi^*_{p\alpha})_{1\le p\le4,\alpha\in\A}$ by the formulas
$\Phi^*_{1\alpha}=\Phi^\intercal_{1\alpha}$, $\Phi^*_{2\alpha}=\Phi^\intercal_{4\alpha}$, $\Phi^*_{3\alpha}=\Phi^\intercal_{3\alpha}$, $\Phi^*_{4\alpha}=\Phi^\intercal_{2\alpha}$, $\alpha\in\A$, where ${}^\intercal$ denotes matrix transposition, which is an involutive operation
on $\psl$. Thus, if we visualize the nodal functions $\Phi_{p\alpha}$ as
written at the corners of the rectangles $R_\alpha$, then the duality transformation
reflects the rectangles across the identity line and transposes the functions
at their corners as elements of $\psl$.

Consider now Rules 1)--3) from Subsection~\ref{subsect:Phi}, but for the
dual scheme $\sigma^*$ and the dual patchwork surface. By construction,
$\Phi^*_{1\omega_i}=\Phi^\intercal_{1\omega_i}
=(\nu_i\Id)^\intercal=\nu_i\Id$, $1\le i\le N$, and since adjacent corners
remain adjacent on the dual surface, Rules~1) and~2) hold for $\Phict^*$.
By Rule~3),
$\Phi_{4\alpha}\circ\Phi^{-1}_{1\alpha}
=\Phi_{3\alpha}\circ\Phi^{-1}_{2\alpha}$, hence $\Phi^*_{4\alpha}\circ(\Phi^*_{1\alpha})^{-1}=\Phi^\intercal_{2\alpha}\circ(\Phi^\intercal_{1\alpha})^{-1}=(\Phi^{-1}_{1\alpha}\circ\Phi_{2\alpha})^\intercal=(\Phi^{-1}_{4\alpha}\circ\Phi_{3\alpha})^\intercal=\Phi^\intercal_{3\alpha}\circ(\Phi^\intercal_{4\alpha})^{-1}=\Phi^*_{3\alpha}\circ(\Phi^*_{2\alpha})^{-1}$, $\alpha\in\A$.
We put
$F^*_\alpha=\Phi^*_{4\alpha}\circ(\Phi^*_{1\alpha})^{-1}
=\Phi^*_{3\alpha}\circ(\Phi^*_{2\alpha})^{-1}$, and now Rule~3), in the form
$\Phi^*_{4\alpha}=F^*_\alpha\circ\Phi^*_{1\alpha}$,
$\Phi^*_{3\alpha}=F^*_\alpha\circ\Phi^*_{2\alpha}$, $\alpha\in\A$, also
holds for the functions in $\Phict^*$ and
$\Fect^*=(F^*_\alpha)_{\alpha\in\A}$.

By Proposition~\ref{prop:Phi&F}, the set $\Fect^*$ defined above is a
commuting collection with the commutation relations corresponding to the
dual scheme $\sigma^*$ and the same sizes of breaks
$\nu_1,\dots,\nu_N$, namely
\begin{equation*}
\tilde F^*_{\alpha_{m_i i}}\circ\dots\circ\tilde F^*_{\alpha_{1i}}
=
F^*_{\beta_{n_i i}}\circ\dots\circ F^*_{\beta_{1i}},
\quad1\le i\le N,
\end{equation*}
where $\tilde F^*_{\omega_i}=F^*_{\omega_i}\circ\nu_i\Id$,
$1\le i\le N$, and $\tilde F^*_\alpha=F^*_\alpha$ for
$\alpha\notin\{\omega_1,\dots,\omega_N\}$, while
$c_i=\left[\begin{array}{ccc}
\alpha_{1i}&\dots&\alpha_{m_i i}\\
\beta_{1i}&\dots&\beta_{n_i i}
\end{array}\right]$
is the cycle in $\sigma$ containing the stationary element $\omega_i\be$,
$1\le i\le N$. We call the commuting collection $\Fect^*$ constructed by
the algorithm described above \emph{dual} to the commuting collection
$\Fect$.

The symmetry of our construction for the nodal functions implies that
$\Phict$ is dual to $\Phict^*$, and hence that $\Fect$ is dual to
$\Fect^*$ (formally, $\Phict^{**}=\Phict$, $\Fect^{**}=\Fect$). Thus the
\emph{duality operation}, again denoted by $\Inv$ (so that
$\Phict^*=\Inv\Phict$, $\Fect^*=\Inv\Fect$), is an involution both on arrays
of nodal functions and on commuting collections:
$\Inv^2=\Id$.

\section{Time-reversing symmetry for induction on commuting collections}
\label{sect:symmetry}

In the work~\cite{Teplinsky23}, a result that is key for the theory of interval rearrangement ensembles was formulated and proved, which establishes the time-reversing symmetry for induction of Rauzy--Veech type on IRE schemes, so let us quote it in exact words (only having removed a single internal reference): 

\begin{theorem_nonumber}[\cite{Teplinsky23}]
Each of the following equalities holds on the set of all those IRE schemes for which its left-hand side is defined:
\begin{gather*}
(\Pi_{\alpha\beta}^{\mathrm{rb}})^{-1}=\Inv\circ\Pi_{\beta\alpha}^{\mathrm{rb}}\circ\Inv,
\\
(\Pi_{\alpha\beta}^{\mathrm{re}})^{-1}=\Inv\circ\Pi_{\alpha\beta}^{\mathrm{lb}}\circ\Inv,
\\
(\Pi_{\alpha\beta}^{\mathrm{lb}})^{-1}=\Inv\circ\Pi_{\alpha\beta}^{\mathrm{re}}\circ\Inv,
\\
(\Pi_{\alpha\beta}^{\mathrm{le}})^{-1}=\Inv\circ\Pi_{\beta\alpha}^{\mathrm{le}}\circ\Inv.
\end{gather*}
\end{theorem_nonumber}

In this section we extend this result to the commuting collections of algebraic linear-fractional functions defined by us in Subsection~\ref{subsect:commuting_collections}, i.e.\ the elements of the space $\psl$ related to each other by commutation relations of the form (\ref{eq:commutation}). Let us remind that such commutation relations determine the set of stationary elements, the sizes of breaks, and also the rotational scheme of special type itself, corresponding to the given commuting collection.

\subsection{Formulation of the result in terms of allowed steps}
\label{subsect:theorem1}

\begin{theorem}\label{theorem:1}
Each of the following equalities holds on the set of all those commuting collections, for which the elementary induction steps in this equality are allowed for the corresponding rotational schemes of special type:
\begin{gather*}
(\Pi_{\alpha\beta}^{\mathrm{rb}})^{-1}=\Inv\Pi_{\beta\alpha}^{\mathrm{rb}}\Inv,
\\
(\Pi_{\alpha\beta}^{\mathrm{re}})^{-1}=\Inv\Pi_{\alpha\beta}^{\mathrm{lb}}\Inv,
\\
(\Pi_{\alpha\beta}^{\mathrm{lb}})^{-1}=\Inv\Pi_{\alpha\beta}^{\mathrm{re}}\Inv,
\\
(\Pi_{\alpha\beta}^{\mathrm{le}})^{-1}=\Inv\Pi_{\beta\alpha}^{\mathrm{le}}\Inv.
\end{gather*}
\end{theorem}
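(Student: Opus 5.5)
We prove each of the four equalities by working with nodal arrays rather than with commuting collections themselves. By Proposition~\ref{prop:Phi&F}, collections and arrays are in bijection once the scheme, the stationary elements and the sizes of breaks are fixed, and duality on arrays is just reflection of the patchwork surface together with transposition. The scheme level is already settled by the cited Theorem of~\cite{Teplinsky23}: for instance, $\sigma'=\Pi_{\alpha\beta}^{\mathrm{rb}}\sigma$ implies $\sigma^*=\Pi_{\beta\alpha}^{\mathrm{rb}}\sigma'^*$. So we only need to check the functions. The equality to prove, in the equivalent form $\Pi_{\beta\alpha}^{\mathrm{rb}}(\Fect'^*)=\Fect^*$ with $\Fect'=\Pi_{\alpha\beta}^{\mathrm{rb}}\Fect$, reduces to two facts: that $(F'^*)_\xi=F^*_\xi$ for $\xi\ne\alpha$, and that $F'^*_\beta\circ F'^*_\alpha=F^*_\alpha$.

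First we check that allowedness transfers. If $\Pi_{\alpha\beta}^{\mathrm{rb}}$ is allowed for $\sigma$, then $\Pi_{\beta\alpha}^{\mathrm{rb}}$ is allowed for $\sigma'^*$. It is defined on $\sigma'^*$ by the cited Proposition from Subsection~\ref{subsect:induction} and the scheme Theorem. It is not prohibited, since the prohibited lists are symmetric under the correspondences rb$\leftrightarrow$rb, re$\leftrightarrow$lb, le$\leftrightarrow$le with swapped labels. It does not ruin positiveness, because its result $\sigma^*$ is rotational. This is also why the statement restricts to allowed steps.

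The main step is to compare the arrays $\Phict$ and $\Phict'$ for $\Fect$ and $\Fect'$, using the explicit filling algorithm of Subsection~\ref{subsect:Phi}. An induction step changes the patchwork surface locally: in the rb case, the rectangle $R_\beta$ is stacked on top of $R_\alpha$ and merged, so that $R'_\beta=R_\beta\cup R_\alpha$ vertically on the $\sigma$-side. Equivalently, on the dual side the element $\alpha\en$ is moved within the cycles of $\sigma^*$, as described in the proof of Proposition~\ref{prop:induction_cc}. I expect that $\Phict'$ agrees with $\Phict$ at every corner except the two corners of $R'_\beta$ inherited from $R_\alpha$. Concretely: $\Phi'_{p\xi}=\Phi_{p\xi}$ for $\xi\ne\beta$ or $p\in\{1,2\}$, and $\Phi'_{3\beta}=F_\alpha\Phi_{3\beta}$, $\Phi'_{4\beta}=F_\alpha\Phi_{4\beta}$, which in fact equal $\Phi_{3\alpha}$ and $\Phi_{4\alpha}$ read through the adjacency. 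To prove this, we check Rules~1)--3) for the candidate array with $\Fect'$ and invoke uniqueness in Proposition~\ref{prop:Phi&F}. Rule~3) for $R'_\beta$ follows from $F'_\beta=F_\alpha F_\beta$, and Rule~2) follows at the two nodes whose adjacencies changed. Rule~1) is untouched because the step is not prohibited, so no stationary corner is altered.

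We then transpose. On the dual surface the same local picture becomes a horizontal merging, which is exactly the surface change produced by $\Pi_{\beta\alpha}^{\mathrm{rb}}$ applied to $\sigma'^*$. Since $F^*_\xi=\Phi^*_{4\xi}(\Phi^*_{1\xi})^{-1}=\Phi^\intercal_{2\xi}(\Phi^\intercal_{1\xi})^{-1}$, the dual functions depend only on the lower corners $\Phi_{1\xi},\Phi_{2\xi}$. These are unchanged for all $\xi$, while the relabelling of corners between $R_\alpha$ and $R'_\beta$ yields the required product formula $F^*_\alpha=F'^*_\beta\circ F'^*_\alpha$ (or its analogue for this step).

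The other three equalities follow the same template, with the merge taken horizontally or vertically and from the left or the right. The main obstacle is the bookkeeping of exactly which corners change, and showing that after transposition the changed corners realise precisely the composition prescribed for the dual step, with the correct order of factors. I would do the rb case in full and treat re/lb as a pair, since they are exchanged by duality, so only one computation is needed for each of the pairs $\{\mathrm{rb}\}$, $\{\mathrm{re},\mathrm{lb}\}$, $\{\mathrm{le}\}$. Finally, the involution property $\Inv^2=\Id$ converts the identity $\Inv\,\Pi'\,\Inv\,\Pi=\Id$ into the stated formula for the inverse.
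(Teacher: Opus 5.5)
\textbf{Gap in the main step.} Your local picture of $\Pi_{\alpha\beta}^{\mathrm{rb}}$ on the patchwork surface is wrong, and so the array $\Phict'$ you derive from it is wrong.

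Since $\sigma(\alpha\be)=\beta\en$, the lower-right corner of $R_\alpha$ meets the upper-right corner of $R_\beta$ at the type-R node $\beta\en$. So $R_\alpha$ sits \emph{above} the narrower $R_\beta$, with their right sides aligned. The step does not merge whole rectangles; that is impossible, since $v_\alpha>v_\beta$ and there are always $2N$ patches. Instead, $R'_\beta$ is $R_\beta$ extended upward by the right part of $R_\alpha$ of width $v_\beta$, and $R'_\alpha$ is the remaining left part of $R_\alpha$.

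Two of your formulas are correct: $\Phi'_{4\beta}=F_\alpha\circ\Phi_{4\beta}$ and $\Phi'_{3\beta}=F_\alpha\circ\Phi_{3\beta}=\Phi_{3\alpha}$. However, $F_\alpha\circ\Phi_{4\beta}\ne\Phi_{4\alpha}$ in general. The upper-left corner of $R_\beta$ lies at the node $\sigma(\beta\en)=\gamma\en$, in the interior of the lower side of $R_\alpha$, not at its lower-left corner.

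The corners that really change are the right corners of $R'_\alpha$:
\[
\Phi'_{2\alpha}=\Phi_{4\beta},\qquad \Phi'_{3\alpha}=\Phi'_{4\beta}=F_\alpha\circ\Phi_{4\beta}.
\]
Your candidate has $\Phi'_{2\alpha}=\Phi_{2\alpha}$ and $\Phi'_{3\alpha}=\Phi_{3\alpha}$. This violates Rule~2) at exactly the two nodes whose adjacencies change, so the uniqueness check via Proposition~\ref{prop:Phi&F} cannot succeed.

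\textbf{Consequence for your last step.} Your final paragraph is self-contradictory. If all lower corners $\Phi_{1\xi},\Phi_{2\xi}$ were unchanged, then $F'^*_\xi=F^*_\xi$ for every $\xi$. Applying $\Pi_{\beta\alpha}^{\mathrm{rb}}$ would then replace $F^*_\alpha$ by $F^*_\beta\circ F^*_\alpha\ne F^*_\alpha$, and the identity would fail. The appeal to a ``relabelling of corners'' hides exactly the missing fact, namely $\Phi'_{2\alpha}=\Phi_{4\beta}$.

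\textbf{How to repair it.} With the corrected bookkeeping, your reduction goes through and is a tidy variant of the paper's full array comparison. You get $F'^*_\xi=F^*_\xi$ for $\xi\ne\alpha$, and
\[
F'^*_\beta\circ F'^*_\alpha=(\Phi_{4\beta}\circ\Phi_{1\beta}^{-1}\circ\Phi_{2\beta})^\intercal\circ(\Phi^\intercal_{1\alpha})^{-1}=(F_\beta\circ\Phi_{2\beta})^\intercal\circ(\Phi^\intercal_{1\alpha})^{-1}=\Phi^\intercal_{2\alpha}\circ(\Phi^\intercal_{1\alpha})^{-1}=F^*_\alpha .
\]
This uses Rule~3) in $R_\beta$ and Rule~2) at the non-stationary node $\beta\en$.

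The other three cases need the same corrected local analysis. For example, for $\Pi_{\alpha\beta}^{\mathrm{re}}$ one has $\Phi'_{2\alpha}=\Phi_{2\beta}$, $\Phi'_{3\beta}=\Phi_{1\alpha}$ and $\Phi'_{1\alpha}=\Phi'_{2\beta}=F_\beta^{-1}\circ\Phi_{1\alpha}$. In each case, non-prohibitedness is what guarantees that Rule~2), rather than Rule~1), holds at the nodes involved.
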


We deliberately formulated this theorem in a form similar to the formulation of the theorem from~\cite{Teplinsky23} given above, because we consider it to be the most visual and practical. (We removed the composition signs from the equalities in order to avoid possible confusion with compositions of linear-fractional functions, as in the commutation relations, for ex.) But in this form our formulation requires some additional explanation in view of strengthening the notion of steps being defined to the notion of them being allowed (according to Subsection~\ref{subsect:prohibited}), which explanation we will give now.

For shortening the exposition, let us rewrite the equalities from the formulations of the theorems in a slightly different form. Let $\Pi$ be one of the elementary induction steps $\Pi_{\alpha\beta}^{\mathrm{rb}}$, $\Pi_{\alpha\beta}^{\mathrm{re}}$, $\Pi_{\alpha\beta}^{\mathrm{lb}}$, and $\Pi_{\alpha\beta}^{\mathrm{le}}$, $\alpha,\beta\in\A$. Let us denote by $\Pi^*$ the elementary induction step $\Pi_{\beta\alpha}^{\mathrm{rb}}$, $\Pi_{\alpha\beta}^{\mathrm{lb}}$, $\Pi_{\alpha\beta}^{\mathrm{re}}$ and $\Pi_{\beta\alpha}^{\mathrm{le}}$ respectively, and call it \emph{dual} to $\Pi$ (it is easy to check that $\Pi^{**}=\Pi$, i.~e. the defined duality on steps is an involution). In the introduced notation, the four equalities from the formulations of both theorems can be replaced by the following single equality
$$
\Pi^{-1}=\Inv\Pi^*\Inv.
$$

In the theorem from~\cite{Teplinsky23} quoted above, this equality concerns the action of the elementary induction steps $\Pi$, $\Pi^*$ and the duality involution $\Inv$ on IRE schemes, and hence the theorem actually states that the equality
$$
\Pi^*\Inv\Pi\sigma=\Inv\sigma
$$
holds for all IRE schemes, on which the given induction steps are defined---that is, the step $\Pi$ is defined on the scheme $\sigma$, while the step $\Pi^*$ is defined on the scheme $\sigma'^*=\Inv\Pi\sigma$ (in fact, these two conditions are equivalent, see the remark at the end of this section). 

And the formulation of the present Theorem~\ref{theorem:1} actually means the following: the equality
$$
\Pi^*\Inv\Pi\Fect=\Inv\Fect
$$
holds for all commuting collections $\Fect$, for which the elementary induction steps $\Pi$ and $\Pi^*$ are allowed for the corresponding rotational schemes of special type---that is, the step $\Pi$ is allowed for the scheme $\sigma$, which corresponds to the collection $\Fect$, while the step $\Pi^*$ is allowed for the scheme $\sigma'^*=\Inv\Pi\sigma$, which corresponds to the collection $\Fect'^*=\Inv\Pi\Fect$.

This result can also be equivalently reformulated in terms of the arrays of nodal functions defined in Subsection~\ref{subsect:Phi}: the equality
$$
\Pi^*\Inv\Pi\Phict=\Inv\Phict
$$
holds for all arrays $\Phict$, for which the steps $\Pi$ and $\Pi^*$ are allowed for the corresponding schemes---and in these terms we will be proving our theorem in Subsection~\ref{subsect:proof}.

Let us note that the elementary induction steps $\Pi$ and $\Pi^*$ are always either both defined on the schemes $\sigma$ and $\sigma'^*$ respectively, or both not defined (this is easy to check according to the statement quoted in Subsection~\ref{subsect:prohibited} and the definition of duality on schemes given in Subsection~\ref{subsect:definitions}). It is also true that $\Pi$ and $\Pi^*$ are always either both prohibited for the given set of stationary elements, or both not prohibited (this follows directly from the list of prohibited steps in Subsection~\ref{subsect:prohibited}). But with being ruining positiveness the situation is different: the step $\Pi^*$ is never ruining positiveness for the scheme $\sigma'^*$ (simply because the scheme $\sigma$ is rotational by assumption, hence the dual scheme $\sigma^*=\Pi^*\sigma'^*$ is always positive), and therefore, in this consideration, only the step $\Pi$ can turn out to be ruining positiveness.

\subsection{Induction on nodal functions}
\label{subsect:induction_Phi}

As was noted in Subsection~\ref{subsect:Phi}, commuting collections are in one-to-one correspondence with arrays of nodal functions, and the definition of the duality operation on commuting collections in Subsection~\ref{subsect:dual_surface}\, was given by us through the definition of duality on arrays of nodal functions. We will prove Theorem~\ref{theorem:1} in terms of arrays $\Phict$ as well, rather than in terms of collections $\Fect$. For this we will need to learn how do allowed (according to Subsection~\ref{subsect:prohibited}) elementary induction steps act on nodal functions.

For proper visualization it is necessary to choose the patchwork surface of the natural extension $[(\sigma,\uect), (\sigma^*,\yect)]$ described in Subsection~\ref{subsect:patchwork_surface} in such a way that its image under the step $\Pi$ is also a patchwork surface---that is, so that for the four vectors of endpoints $\uect, \uect', \yect, \yect'\in\R^{\bar\A}$ in the equality
$$
[(\sigma',\uect'), (\sigma'^*,\yect')]=\Pi[(\sigma,\uect), (\sigma^*,\yect)]=[\Pi(\sigma,\uect), (\Pi^*)^{-1}(\sigma^*,\yect)],
$$
all four corresponding vectors of lengths $\vect, \vect', \wect, \wect'\in\R^\A$ need to be positive. And this can always be done, because the step $\Pi$ is allowed by assumption and hence it is not ruining positiveness. It is sufficient to choose for the positive scheme $\sigma'$ any vector of endpoints $\uect'$ allowed for it, with a positive vector of lengths $\vect'$, and for the positive scheme $\sigma^*$ any vector of endpoints $\yect$ allowed for it, with a positive vector of lengths $\wect$. Then the vectors $\vect$ and $\wect'$ will also automatically be positive, since $(\sigma,\uect)=\Pi^{-1}(\sigma',\uect')$, $(\sigma'^*,\yect')=(\Pi^*)^{-1}(\sigma^*,\yect)$, and the reverse induction steps only add lengths, always transforming positive vectors of lengths into positive ones. It is under these assumptions that we will visualize the arrays of nodal functions on patchwork surfaces in the four cases that we now consider.

\subsubsection*{Case $\Pi=\Pi_{\alpha\beta}^{\mathrm{rb}}$, $\alpha,\beta\in\A$.}

Let us describe this case in detail; the rest are considered similarly. We begin with the fact that the vectors of lengths $\vect,\wect$ under the action of the step $\Pi_{\alpha\beta}^{\mathrm{rb}}$ transform as follows: $v'_\alpha=v_\alpha-v_\beta$, $w'_\beta=w_\beta+w_\alpha$, while all the other lengths remain unchanged (the vectors of lengths $\vect, \vect', \wect, \wect'$ are positive according to our choice of the natural extension described above, by which the patchwork surface is constructed). The rectangular patches $R_\alpha$ and $R_\beta$ have dimensions $v_\alpha\times w_\alpha$ and $v_\beta\times w_\beta$, respectively. These rectangles are mutually positioned on the surface so that the lower right corner of $R_\alpha$ and the upper right corner of $R_\beta$ are adjacent. The rectangles $R'_\alpha$ and $R'_\beta$ have dimensions $v'_\alpha\times w'_\alpha$ and $v'_\beta\times w'_\beta$, that is, $(v_\alpha-v_\beta)\times w_\alpha$ and $v_\beta\times(w_\beta+w_\alpha)$, respectively, and they are joined so that the upper right corner of $R'_\alpha$ and the upper left corner of $R'_\beta$ are adjacent, meeting at one node. It is easy to see that $R_\alpha\cup R_\beta=R'_\alpha\cup R'_\beta$, while all the other rectangles in the patchwork surface remain unchanged under the action of the step $\Pi_{\alpha\beta}^{\mathrm{rb}}$. In the figure below we show only that part of our surface, which is the union of these two patches. It is also easy to see that under the action of the step $\Pi_{\alpha\beta}^{\mathrm{rb}}$ exactly one node on the patchwork surface disappears, and exactly one new node appears (in the figure these two nodes are marked with small circles around them), while all the other nodes stay in their places.

\vspace{20pt}

\begin{tikzpicture}[x=0.75pt,y=0.75pt,yscale=-1,xscale=1]

\draw  [fill={rgb, 255:red, 0; green, 0; blue, 0 }  ,fill opacity=0.06 ] (80,110) -- (240,110) -- (240,170) -- (80,170) -- cycle ;
\draw  [fill={rgb, 255:red, 0; green, 0; blue, 0 }  ,fill opacity=0.06 ] (140,170) -- (240,170) -- (240,240) -- (140,240) -- cycle ;
\draw  [fill={rgb, 255:red, 0; green, 0; blue, 0 }  ,fill opacity=0.06 ] (441,110) -- (501,110) -- (501,170) -- (441,170) -- cycle ;
\draw  [fill={rgb, 255:red, 0; green, 0; blue, 0 }  ,fill opacity=0.06 ] (501,110) -- (601,110) -- (601,240) -- (501,240) -- cycle ;
\draw   (230,170) .. controls (230,164.48) and (234.48,160) .. (240,160) .. controls (245.52,160) and (250,164.48) .. (250,170) .. controls (250,175.52) and (245.52,180) .. (240,180) .. controls (234.48,180) and (230,175.52) .. (230,170) -- cycle ;
\draw   (491,110) .. controls (491,104.48) and (495.48,100) .. (501,100) .. controls (506.52,100) and (511,104.48) .. (511,110) .. controls (511,115.52) and (506.52,120) .. (501,120) .. controls (495.48,120) and (491,115.52) .. (491,110) -- cycle ;
\draw  [fill={rgb, 255:red, 0; green, 0; blue, 0 }  ,fill opacity=1 ] (76.3,110) .. controls (76.3,107.96) and (77.96,106.3) .. (80,106.3) .. controls (82.04,106.3) and (83.7,107.96) .. (83.7,110) .. controls (83.7,112.04) and (82.04,113.7) .. (80,113.7) .. controls (77.96,113.7) and (76.3,112.04) .. (76.3,110) -- cycle ;
\draw  [fill={rgb, 255:red, 0; green, 0; blue, 0 }  ,fill opacity=1 ] (236.3,110) .. controls (236.3,107.96) and (237.96,106.3) .. (240,106.3) .. controls (242.04,106.3) and (243.7,107.96) .. (243.7,110) .. controls (243.7,112.04) and (242.04,113.7) .. (240,113.7) .. controls (237.96,113.7) and (236.3,112.04) .. (236.3,110) -- cycle ;
\draw  [fill={rgb, 255:red, 0; green, 0; blue, 0 }  ,fill opacity=1 ] (76.3,170) .. controls (76.3,167.96) and (77.96,166.3) .. (80,166.3) .. controls (82.04,166.3) and (83.7,167.96) .. (83.7,170) .. controls (83.7,172.04) and (82.04,173.7) .. (80,173.7) .. controls (77.96,173.7) and (76.3,172.04) .. (76.3,170) -- cycle ;
\draw  [fill={rgb, 255:red, 0; green, 0; blue, 0 }  ,fill opacity=1 ] (236.3,170) .. controls (236.3,167.96) and (237.96,166.3) .. (240,166.3) .. controls (242.04,166.3) and (243.7,167.96) .. (243.7,170) .. controls (243.7,172.04) and (242.04,173.7) .. (240,173.7) .. controls (237.96,173.7) and (236.3,172.04) .. (236.3,170) -- cycle ;
\draw  [fill={rgb, 255:red, 0; green, 0; blue, 0 }  ,fill opacity=1 ] (136.3,240) .. controls (136.3,237.96) and (137.96,236.3) .. (140,236.3) .. controls (142.04,236.3) and (143.7,237.96) .. (143.7,240) .. controls (143.7,242.04) and (142.04,243.7) .. (140,243.7) .. controls (137.96,243.7) and (136.3,242.04) .. (136.3,240) -- cycle ;
\draw  [fill={rgb, 255:red, 0; green, 0; blue, 0 }  ,fill opacity=1 ] (236.3,240) .. controls (236.3,237.96) and (237.96,236.3) .. (240,236.3) .. controls (242.04,236.3) and (243.7,237.96) .. (243.7,240) .. controls (243.7,242.04) and (242.04,243.7) .. (240,243.7) .. controls (237.96,243.7) and (236.3,242.04) .. (236.3,240) -- cycle ;
\draw  [fill={rgb, 255:red, 0; green, 0; blue, 0 }  ,fill opacity=1 ] (437.3,110) .. controls (437.3,107.96) and (438.96,106.3) .. (441,106.3) .. controls (443.04,106.3) and (444.7,107.96) .. (444.7,110) .. controls (444.7,112.04) and (443.04,113.7) .. (441,113.7) .. controls (438.96,113.7) and (437.3,112.04) .. (437.3,110) -- cycle ;
\draw  [fill={rgb, 255:red, 0; green, 0; blue, 0 }  ,fill opacity=1 ] (497.3,110) .. controls (497.3,107.96) and (498.96,106.3) .. (501,106.3) .. controls (503.04,106.3) and (504.7,107.96) .. (504.7,110) .. controls (504.7,112.04) and (503.04,113.7) .. (501,113.7) .. controls (498.96,113.7) and (497.3,112.04) .. (497.3,110) -- cycle ;
\draw  [fill={rgb, 255:red, 0; green, 0; blue, 0 }  ,fill opacity=1 ] (597.3,110) .. controls (597.3,107.96) and (598.96,106.3) .. (601,106.3) .. controls (603.04,106.3) and (604.7,107.96) .. (604.7,110) .. controls (604.7,112.04) and (603.04,113.7) .. (601,113.7) .. controls (598.96,113.7) and (597.3,112.04) .. (597.3,110) -- cycle ;
\draw  [fill={rgb, 255:red, 0; green, 0; blue, 0 }  ,fill opacity=1 ] (437.3,170) .. controls (437.3,167.96) and (438.96,166.3) .. (441,166.3) .. controls (443.04,166.3) and (444.7,167.96) .. (444.7,170) .. controls (444.7,172.04) and (443.04,173.7) .. (441,173.7) .. controls (438.96,173.7) and (437.3,172.04) .. (437.3,170) -- cycle ;
\draw  [fill={rgb, 255:red, 0; green, 0; blue, 0 }  ,fill opacity=1 ] (497.3,170) .. controls (497.3,167.96) and (498.96,166.3) .. (501,166.3) .. controls (503.04,166.3) and (504.7,167.96) .. (504.7,170) .. controls (504.7,172.04) and (503.04,173.7) .. (501,173.7) .. controls (498.96,173.7) and (497.3,172.04) .. (497.3,170) -- cycle ;
\draw  [fill={rgb, 255:red, 0; green, 0; blue, 0 }  ,fill opacity=1 ] (497.3,240) .. controls (497.3,237.96) and (498.96,236.3) .. (501,236.3) .. controls (503.04,236.3) and (504.7,237.96) .. (504.7,240) .. controls (504.7,242.04) and (503.04,243.7) .. (501,243.7) .. controls (498.96,243.7) and (497.3,242.04) .. (497.3,240) -- cycle ;
\draw  [fill={rgb, 255:red, 0; green, 0; blue, 0 }  ,fill opacity=1 ] (597.3,240) .. controls (597.3,237.96) and (598.96,236.3) .. (601,236.3) .. controls (603.04,236.3) and (604.7,237.96) .. (604.7,240) .. controls (604.7,242.04) and (603.04,243.7) .. (601,243.7) .. controls (598.96,243.7) and (597.3,242.04) .. (597.3,240) -- cycle ;
\draw  [fill={rgb, 255:red, 0; green, 0; blue, 0 }  ,fill opacity=1 ] (136.3,170) .. controls (136.3,167.96) and (137.96,166.3) .. (140,166.3) .. controls (142.04,166.3) and (143.7,167.96) .. (143.7,170) .. controls (143.7,172.04) and (142.04,173.7) .. (140,173.7) .. controls (137.96,173.7) and (136.3,172.04) .. (136.3,170) -- cycle ;
\draw    (320,160) -- (388,160) ;
\draw [shift={(390,160)}, rotate = 180] [color={rgb, 255:red, 0; green, 0; blue, 0 }  ][line width=0.75]    (10.93,-3.29) .. controls (6.95,-1.4) and (3.31,-0.3) .. (0,0) .. controls (3.31,0.3) and (6.95,1.4) .. (10.93,3.29)   ;
\draw [shift={(320,160)}, rotate = 180] [color={rgb, 255:red, 0; green, 0; blue, 0 }  ][line width=0.75]    (0,5.59) -- (0,-5.59)   ;

\draw (157,130.4) node [anchor=north west][inner sep=0.75pt]    {$R_{\alpha }$};
\draw (178,210.4) node [anchor=north west][inner sep=0.75pt]    {$R_{\beta }$};
\draw (462.99,127.57) node [anchor=north west][inner sep=0.75pt]    {$R'_{\alpha }$};
\draw (547.99,172.79) node [anchor=north west][inner sep=0.75pt]    {$R'_{\beta }$};
\draw (341,126.4) node [anchor=north west][inner sep=0.75pt]    {$\Pi_{\alpha\beta}^{\mathrm{rb}}$};
\draw (51,82.4) node [anchor=north west][inner sep=0.75pt]    {$\Phi _{4\alpha }$};
\draw (241,82.4) node [anchor=north west][inner sep=0.75pt]    {$\Phi _{3\alpha }$};
\draw (51,172.4) node [anchor=north west][inner sep=0.75pt]    {$\Phi _{1\alpha }$};
\draw (111,240.4) node [anchor=north west][inner sep=0.75pt]    {$\Phi _{1\beta }$};
\draw (111,172.4) node [anchor=north west][inner sep=0.75pt]    {$\Phi _{4\beta }$};
\draw (242,241.9) node [anchor=north west][inner sep=0.75pt]    {$\Phi _{2\beta }$};
\draw (251,152.4) node [anchor=north west][inner sep=0.75pt]    {$ \begin{array}{l}
\Phi _{2\alpha } =\\
=\Phi _{3\beta }
\end{array}$};
\draw (411,82.4) node [anchor=north west][inner sep=0.75pt]    {$\Phi '_{4\alpha }$};
\draw (411,172.4) node [anchor=north west][inner sep=0.75pt]    {$\Phi '_{1\alpha }$};
\draw (463,82.4) node [anchor=north west][inner sep=0.75pt]    {$\Phi '_{3\alpha } =\Phi '_{4\beta }$};
\draw (471,172.4) node [anchor=north west][inner sep=0.75pt]    {$\Phi '_{2\alpha }$};
\draw (602,82.4) node [anchor=north west][inner sep=0.75pt]    {$\Phi '_{3\beta }$};
\draw (602,240.4) node [anchor=north west][inner sep=0.75pt]    {$\Phi '_{2\beta }$};
\draw (472,240.4) node [anchor=north west][inner sep=0.75pt]    {$\Phi '_{1\beta }$};

\end{tikzpicture}

\vspace{20pt}

Now let us derive the formulas for the nodal functions of the array $\Phict'=\Pi_{\alpha\beta}^{\mathrm{rb}}\Phict$. Recall that the action of the step $\Pi_{\alpha\beta}^{\mathrm{rb}}$ on a commuting collection $\Fect$ changes only one of its elements according to the formula $F'_\beta=F_\alpha\circ F_\beta$, while the rest remain unchanged (see Subsection~\ref{subsect:commuting_collections}). Let us also recall (see the proof of Proposition~\ref{prop:induction_cc}) that the scheme $\sigma^*$ contained the cycles $\left[\begin{array}{ccc}
\dots & \dots & \dots \\
\dots & \beta\quad \alpha & \dots                                                                                                                                           \end{array}\right]$, $\left[\begin{array}{cc}
\dots & \beta \\
\dots & \gamma \end{array}\right]$, which under the action of the step $\Pi_{\alpha\beta}^{\mathrm{rb}}$ on the scheme $\sigma$ were transformed into the cycles $\left[\begin{array}{ccc}
\dots & \dots & \dots \\
\dots & \beta & \dots                                                                                                                                           \end{array}\right]$, $\left[\begin{array}{cc}
\dots & \beta \\
\dots & \gamma\quad \alpha \end{array}\right]$ in the scheme $\sigma'^*$ (dual to $\sigma'$), where $\gamma\ne\beta$ is the label of the rectangle $R_\gamma$, which adjoins its upper right corner to the upper left corner of $R_\beta$ (i.e., it is that rectangular patch, which complements the angle of size $3\pi/2$ in the part of the patchwork surface shown in the figure above to the full angle; the case $\gamma=\alpha$ is possible, which does not affect our reasoning). Somewhere in each of the cycles in $\sigma'^*$ there is a stationary element, and moving forward and backward from it along the corresponding cycle according to the algorithm for defining nodal functions described in Subsection~\ref{subsect:Phi}, we obtain the equality $\Phi'_{3\beta}=\Phi_{3\alpha}$, as well as the equalities $\Phi'_{i\xi}=\Phi_{i\xi}$ for $i\xi\not\in\{4\beta,3\alpha,2\alpha\}$. For the remaining three functions we have: $\Phi'_{4\beta}=F'_\beta\circ\Phi'_{1\beta}=F_\alpha\circ (F_\beta\circ\Phi_{1\beta})=F_\alpha\circ\Phi_{4\beta}$ according to rule~3) (see Subsection~\ref{subsect:Phi}); $\Phi'_{3\alpha}=\Phi'_{4\beta}=F_\alpha\circ\Phi_{4\beta}$ according to rule~2); $\Phi'_{2\alpha}=(F'_\alpha)^{-1}\circ\Phi'_{3\alpha}=(F_\alpha)^{-1}\circ (F_\alpha\circ\Phi_{4\beta})=\Phi_{4\beta}$ according to rule~3). Thus, we have obtained formulas for all elements of the array $\Phict'$:
$$
\Phi'_{4\beta}=\Phi'_{3\alpha}=F_\alpha\circ\,\Phi_{4\beta},\, \Phi'_{3\beta}=\Phi_{3\alpha},\, \Phi'_{2\alpha}=\Phi_{4\beta},\quad\text{while}\quad\Phi'_{i\xi}=\Phi_{i\xi},\,i\xi\not\in\{4\beta,3\alpha,3\beta,2\alpha\}
$$
(notice that, if necessary, these formulas can easily be rewritten only in terms of nodal functions by substituting the expression $\Phi_{4\alpha}\circ\Phi_{1\alpha}^{-1}$ instead of $F_\alpha$ into the first of them, according to rule~3); the same applies to the remaining three cases considered below).

In three other cases of elementary induction steps, similar considerations  lead to the trans\-formations of the part of the patchwork surface transformed by a given step $\Pi$ we draw in figures below, and to the formulas for the nodal functions of the array $\Phict'=\Pi\Phict$ we list under the figures. (One must take into account the assumption that the step is allowed, as its non-prohibitedness guarantees that the nodal functions at adjacent corners of rectangles meeting at the nodes marked with small circles are equal.)

\subsubsection*{Case $\Pi=\Pi_{\alpha\beta}^{\mathrm{re}}$, $\alpha,\beta\in\A$:}
\nopagebreak

\begin{tikzpicture}[x=0.75pt,y=0.75pt,yscale=-1,xscale=1]

\draw  [fill={rgb, 255:red, 0; green, 0; blue, 0 }  ,fill opacity=0.06 ] (140,110) -- (240,110) -- (240,170) -- (140,170) -- cycle ;
\draw  [fill={rgb, 255:red, 0; green, 0; blue, 0 }  ,fill opacity=0.06 ] (80,170) -- (240,170) -- (240,240) -- (80,240) -- cycle ;
\draw  [fill={rgb, 255:red, 0; green, 0; blue, 0 }  ,fill opacity=0.06 ] (500,110) -- (600,110) -- (600,240) -- (500,240) -- cycle ;
\draw  [fill={rgb, 255:red, 0; green, 0; blue, 0 }  ,fill opacity=0.06 ] (440,170) -- (500,170) -- (500,240) -- (440,240) -- cycle ;
\draw   (230,170) .. controls (230,164.48) and (234.48,160) .. (240,160) .. controls (245.52,160) and (250,164.48) .. (250,170) .. controls (250,175.52) and (245.52,180) .. (240,180) .. controls (234.48,180) and (230,175.52) .. (230,170) -- cycle ;
\draw   (490,240) .. controls (490,234.48) and (494.48,230) .. (500,230) .. controls (505.52,230) and (510,234.48) .. (510,240) .. controls (510,245.52) and (505.52,250) .. (500,250) .. controls (494.48,250) and (490,245.52) .. (490,240) -- cycle ;
\draw  [fill={rgb, 255:red, 0; green, 0; blue, 0 }  ,fill opacity=1 ] (136.3,110) .. controls (136.3,107.96) and (137.96,106.3) .. (140,106.3) .. controls (142.04,106.3) and (143.7,107.96) .. (143.7,110) .. controls (143.7,112.04) and (142.04,113.7) .. (140,113.7) .. controls (137.96,113.7) and (136.3,112.04) .. (136.3,110) -- cycle ;
\draw  [fill={rgb, 255:red, 0; green, 0; blue, 0 }  ,fill opacity=1 ] (236.3,110) .. controls (236.3,107.96) and (237.96,106.3) .. (240,106.3) .. controls (242.04,106.3) and (243.7,107.96) .. (243.7,110) .. controls (243.7,112.04) and (242.04,113.7) .. (240,113.7) .. controls (237.96,113.7) and (236.3,112.04) .. (236.3,110) -- cycle ;
\draw  [fill={rgb, 255:red, 0; green, 0; blue, 0 }  ,fill opacity=1 ] (76.3,240) .. controls (76.3,237.96) and (77.96,236.3) .. (80,236.3) .. controls (82.04,236.3) and (83.7,237.96) .. (83.7,240) .. controls (83.7,242.04) and (82.04,243.7) .. (80,243.7) .. controls (77.96,243.7) and (76.3,242.04) .. (76.3,240) -- cycle ;
\draw  [fill={rgb, 255:red, 0; green, 0; blue, 0 }  ,fill opacity=1 ] (236.3,170) .. controls (236.3,167.96) and (237.96,166.3) .. (240,166.3) .. controls (242.04,166.3) and (243.7,167.96) .. (243.7,170) .. controls (243.7,172.04) and (242.04,173.7) .. (240,173.7) .. controls (237.96,173.7) and (236.3,172.04) .. (236.3,170) -- cycle ;
\draw  [fill={rgb, 255:red, 0; green, 0; blue, 0 }  ,fill opacity=1 ] (136.3,170) .. controls (136.3,167.96) and (137.96,166.3) .. (140,166.3) .. controls (142.04,166.3) and (143.7,167.96) .. (143.7,170) .. controls (143.7,172.04) and (142.04,173.7) .. (140,173.7) .. controls (137.96,173.7) and (136.3,172.04) .. (136.3,170) -- cycle ;
\draw  [fill={rgb, 255:red, 0; green, 0; blue, 0 }  ,fill opacity=1 ] (236.3,240) .. controls (236.3,237.96) and (237.96,236.3) .. (240,236.3) .. controls (242.04,236.3) and (243.7,237.96) .. (243.7,240) .. controls (243.7,242.04) and (242.04,243.7) .. (240,243.7) .. controls (237.96,243.7) and (236.3,242.04) .. (236.3,240) -- cycle ;
\draw  [fill={rgb, 255:red, 0; green, 0; blue, 0 }  ,fill opacity=1 ] (436.3,170) .. controls (436.3,167.96) and (437.96,166.3) .. (440,166.3) .. controls (442.04,166.3) and (443.7,167.96) .. (443.7,170) .. controls (443.7,172.04) and (442.04,173.7) .. (440,173.7) .. controls (437.96,173.7) and (436.3,172.04) .. (436.3,170) -- cycle ;
\draw  [fill={rgb, 255:red, 0; green, 0; blue, 0 }  ,fill opacity=1 ] (496.3,240) .. controls (496.3,237.96) and (497.96,236.3) .. (500,236.3) .. controls (502.04,236.3) and (503.7,237.96) .. (503.7,240) .. controls (503.7,242.04) and (502.04,243.7) .. (500,243.7) .. controls (497.96,243.7) and (496.3,242.04) .. (496.3,240) -- cycle ;
\draw  [fill={rgb, 255:red, 0; green, 0; blue, 0 }  ,fill opacity=1 ] (596.3,110) .. controls (596.3,107.96) and (597.96,106.3) .. (600,106.3) .. controls (602.04,106.3) and (603.7,107.96) .. (603.7,110) .. controls (603.7,112.04) and (602.04,113.7) .. (600,113.7) .. controls (597.96,113.7) and (596.3,112.04) .. (596.3,110) -- cycle ;
\draw  [fill={rgb, 255:red, 0; green, 0; blue, 0 }  ,fill opacity=1 ] (496.3,170) .. controls (496.3,167.96) and (497.96,166.3) .. (500,166.3) .. controls (502.04,166.3) and (503.7,167.96) .. (503.7,170) .. controls (503.7,172.04) and (502.04,173.7) .. (500,173.7) .. controls (497.96,173.7) and (496.3,172.04) .. (496.3,170) -- cycle ;
\draw  [fill={rgb, 255:red, 0; green, 0; blue, 0 }  ,fill opacity=1 ] (496.3,110) .. controls (496.3,107.96) and (497.96,106.3) .. (500,106.3) .. controls (502.04,106.3) and (503.7,107.96) .. (503.7,110) .. controls (503.7,112.04) and (502.04,113.7) .. (500,113.7) .. controls (497.96,113.7) and (496.3,112.04) .. (496.3,110) -- cycle ;
\draw  [fill={rgb, 255:red, 0; green, 0; blue, 0 }  ,fill opacity=1 ] (436.3,240) .. controls (436.3,237.96) and (437.96,236.3) .. (440,236.3) .. controls (442.04,236.3) and (443.7,237.96) .. (443.7,240) .. controls (443.7,242.04) and (442.04,243.7) .. (440,243.7) .. controls (437.96,243.7) and (436.3,242.04) .. (436.3,240) -- cycle ;
\draw  [fill={rgb, 255:red, 0; green, 0; blue, 0 }  ,fill opacity=1 ] (596.3,240) .. controls (596.3,237.96) and (597.96,236.3) .. (600,236.3) .. controls (602.04,236.3) and (603.7,237.96) .. (603.7,240) .. controls (603.7,242.04) and (602.04,243.7) .. (600,243.7) .. controls (597.96,243.7) and (596.3,242.04) .. (596.3,240) -- cycle ;
\draw  [fill={rgb, 255:red, 0; green, 0; blue, 0 }  ,fill opacity=1 ] (76.3,170) .. controls (76.3,167.96) and (77.96,166.3) .. (80,166.3) .. controls (82.04,166.3) and (83.7,167.96) .. (83.7,170) .. controls (83.7,172.04) and (82.04,173.7) .. (80,173.7) .. controls (77.96,173.7) and (76.3,172.04) .. (76.3,170) -- cycle ;
\draw    (320,160) -- (388,160) ;
\draw [shift={(390,160)}, rotate = 180] [color={rgb, 255:red, 0; green, 0; blue, 0 }  ][line width=0.75]    (10.93,-3.29) .. controls (6.95,-1.4) and (3.31,-0.3) .. (0,0) .. controls (3.31,0.3) and (6.95,1.4) .. (10.93,3.29)   ;
\draw [shift={(320,160)}, rotate = 180] [color={rgb, 255:red, 0; green, 0; blue, 0 }  ][line width=0.75]    (0,5.59) -- (0,-5.59)   ;

\draw (181,130.4) node [anchor=north west][inner sep=0.75pt]    {$R_{\alpha }$};
\draw (151,200.4) node [anchor=north west][inner sep=0.75pt]    {$R_{\beta }$};
\draw (540,162.4) node [anchor=north west][inner sep=0.75pt]    {$R'_{\alpha }$};
\draw (461.32,198.12) node [anchor=north west][inner sep=0.75pt]    {$R'_{\beta }$};
\draw (341,126.4) node [anchor=north west][inner sep=0.75pt]    {$\Pi_{\alpha\beta}^{\mathrm{re}}$};
\draw (110,82.4) node [anchor=north west][inner sep=0.75pt]    {$\Phi _{4\alpha }$};
\draw (241,82.4) node [anchor=north west][inner sep=0.75pt]    {$\Phi _{3\alpha }$};
\draw (106.5,143.07) node [anchor=north west][inner sep=0.75pt]    {$\Phi _{1\alpha }$};
\draw (51,240.4) node [anchor=north west][inner sep=0.75pt]    {$\Phi _{1\beta }$};
\draw (51,142.4) node [anchor=north west][inner sep=0.75pt]    {$\Phi _{4\beta }$};
\draw (242,239.7) node [anchor=north west][inner sep=0.75pt]    {$\Phi _{2\beta }$};
\draw (248,147.4) node [anchor=north west][inner sep=0.75pt]    {$ \begin{array}{l}
\Phi _{2\alpha } =\\
=\Phi _{3\beta }
\end{array}$};
\draw (411,142.4) node [anchor=north west][inner sep=0.75pt]    {$\Phi '_{4\beta }$};
\draw (600,82.4) node [anchor=north west][inner sep=0.75pt]    {$\Phi '_{3\alpha }$};
\draw (470,250.4) node [anchor=north west][inner sep=0.75pt]    {$\Phi '_{2\beta } =\Phi '_{1\alpha }$};
\draw (601,242.4) node [anchor=north west][inner sep=0.75pt]    {$\Phi '_{2\alpha }$};
\draw (471,142.4) node [anchor=north west][inner sep=0.75pt]    {$\Phi '_{3\beta }$};
\draw (470,82.4) node [anchor=north west][inner sep=0.75pt]    {$\Phi '_{4\alpha }$};
\draw (411,240.4) node [anchor=north west][inner sep=0.75pt]    {$\Phi '_{1\beta }$};

\end{tikzpicture}
\nopagebreak

$$
\Phi'_{2\beta}=\Phi'_{1\alpha}=F_\beta^{-1}\circ\Phi_{1\alpha},\,\Phi'_{2\alpha}=\Phi_{2\beta},\,\Phi'_{3\beta}=\Phi_{1\alpha},\quad\text{while}\quad\Phi'_{i\xi}=\Phi_{i\xi},\,i\xi\not\in\{2\beta,1\alpha,2\alpha,3\beta\}.
$$

\subsubsection*{Case $\Pi=\Pi_{\alpha\beta}^{\mathrm{lb}}$, $\alpha,\beta\in\A$:}
\nopagebreak

\begin{tikzpicture}[x=0.75pt,y=0.75pt,yscale=-1,xscale=1]

\draw  [fill={rgb, 255:red, 0; green, 0; blue, 0 }  ,fill opacity=0.06 ] (80,110) -- (240,110) -- (240,170) -- (80,170) -- cycle ;
\draw  [fill={rgb, 255:red, 0; green, 0; blue, 0 }  ,fill opacity=0.06 ] (80,170) -- (140,170) -- (140,240) -- (80,240) -- cycle ;
\draw  [fill={rgb, 255:red, 0; green, 0; blue, 0 }  ,fill opacity=0.06 ] (440,110) -- (500,110) -- (500,240) -- (440,240) -- cycle ;
\draw  [fill={rgb, 255:red, 0; green, 0; blue, 0 }  ,fill opacity=0.06 ] (500,110) -- (600,110) -- (600,170) -- (500,170) -- cycle ;
\draw   (70,170) .. controls (70,164.48) and (74.48,160) .. (80,160) .. controls (85.52,160) and (90,164.48) .. (90,170) .. controls (90,175.52) and (85.52,180) .. (80,180) .. controls (74.48,180) and (70,175.52) .. (70,170) -- cycle ;
\draw   (490,110) .. controls (490,104.48) and (494.48,100) .. (500,100) .. controls (505.52,100) and (510,104.48) .. (510,110) .. controls (510,115.52) and (505.52,120) .. (500,120) .. controls (494.48,120) and (490,115.52) .. (490,110) -- cycle ;
\draw  [fill={rgb, 255:red, 0; green, 0; blue, 0 }  ,fill opacity=1 ] (76.3,110) .. controls (76.3,107.96) and (77.96,106.3) .. (80,106.3) .. controls (82.04,106.3) and (83.7,107.96) .. (83.7,110) .. controls (83.7,112.04) and (82.04,113.7) .. (80,113.7) .. controls (77.96,113.7) and (76.3,112.04) .. (76.3,110) -- cycle ;
\draw  [fill={rgb, 255:red, 0; green, 0; blue, 0 }  ,fill opacity=1 ] (236.3,110) .. controls (236.3,107.96) and (237.96,106.3) .. (240,106.3) .. controls (242.04,106.3) and (243.7,107.96) .. (243.7,110) .. controls (243.7,112.04) and (242.04,113.7) .. (240,113.7) .. controls (237.96,113.7) and (236.3,112.04) .. (236.3,110) -- cycle ;
\draw  [fill={rgb, 255:red, 0; green, 0; blue, 0 }  ,fill opacity=1 ] (76.3,170) .. controls (76.3,167.96) and (77.96,166.3) .. (80,166.3) .. controls (82.04,166.3) and (83.7,167.96) .. (83.7,170) .. controls (83.7,172.04) and (82.04,173.7) .. (80,173.7) .. controls (77.96,173.7) and (76.3,172.04) .. (76.3,170) -- cycle ;
\draw  [fill={rgb, 255:red, 0; green, 0; blue, 0 }  ,fill opacity=1 ] (236.3,170) .. controls (236.3,167.96) and (237.96,166.3) .. (240,166.3) .. controls (242.04,166.3) and (243.7,167.96) .. (243.7,170) .. controls (243.7,172.04) and (242.04,173.7) .. (240,173.7) .. controls (237.96,173.7) and (236.3,172.04) .. (236.3,170) -- cycle ;
\draw  [fill={rgb, 255:red, 0; green, 0; blue, 0 }  ,fill opacity=1 ] (136.3,240) .. controls (136.3,237.96) and (137.96,236.3) .. (140,236.3) .. controls (142.04,236.3) and (143.7,237.96) .. (143.7,240) .. controls (143.7,242.04) and (142.04,243.7) .. (140,243.7) .. controls (137.96,243.7) and (136.3,242.04) .. (136.3,240) -- cycle ;
\draw  [fill={rgb, 255:red, 0; green, 0; blue, 0 }  ,fill opacity=1 ] (76.3,240) .. controls (76.3,237.96) and (77.96,236.3) .. (80,236.3) .. controls (82.04,236.3) and (83.7,237.96) .. (83.7,240) .. controls (83.7,242.04) and (82.04,243.7) .. (80,243.7) .. controls (77.96,243.7) and (76.3,242.04) .. (76.3,240) -- cycle ;
\draw  [fill={rgb, 255:red, 0; green, 0; blue, 0 }  ,fill opacity=1 ] (436.3,110) .. controls (436.3,107.96) and (437.96,106.3) .. (440,106.3) .. controls (442.04,106.3) and (443.7,107.96) .. (443.7,110) .. controls (443.7,112.04) and (442.04,113.7) .. (440,113.7) .. controls (437.96,113.7) and (436.3,112.04) .. (436.3,110) -- cycle ;
\draw  [fill={rgb, 255:red, 0; green, 0; blue, 0 }  ,fill opacity=1 ] (496.3,110) .. controls (496.3,107.96) and (497.96,106.3) .. (500,106.3) .. controls (502.04,106.3) and (503.7,107.96) .. (503.7,110) .. controls (503.7,112.04) and (502.04,113.7) .. (500,113.7) .. controls (497.96,113.7) and (496.3,112.04) .. (496.3,110) -- cycle ;
\draw  [fill={rgb, 255:red, 0; green, 0; blue, 0 }  ,fill opacity=1 ] (596.3,110) .. controls (596.3,107.96) and (597.96,106.3) .. (600,106.3) .. controls (602.04,106.3) and (603.7,107.96) .. (603.7,110) .. controls (603.7,112.04) and (602.04,113.7) .. (600,113.7) .. controls (597.96,113.7) and (596.3,112.04) .. (596.3,110) -- cycle ;
\draw  [fill={rgb, 255:red, 0; green, 0; blue, 0 }  ,fill opacity=1 ] (436.3,240) .. controls (436.3,237.96) and (437.96,236.3) .. (440,236.3) .. controls (442.04,236.3) and (443.7,237.96) .. (443.7,240) .. controls (443.7,242.04) and (442.04,243.7) .. (440,243.7) .. controls (437.96,243.7) and (436.3,242.04) .. (436.3,240) -- cycle ;
\draw  [fill={rgb, 255:red, 0; green, 0; blue, 0 }  ,fill opacity=1 ] (496.3,170) .. controls (496.3,167.96) and (497.96,166.3) .. (500,166.3) .. controls (502.04,166.3) and (503.7,167.96) .. (503.7,170) .. controls (503.7,172.04) and (502.04,173.7) .. (500,173.7) .. controls (497.96,173.7) and (496.3,172.04) .. (496.3,170) -- cycle ;
\draw  [fill={rgb, 255:red, 0; green, 0; blue, 0 }  ,fill opacity=1 ] (496.3,240) .. controls (496.3,237.96) and (497.96,236.3) .. (500,236.3) .. controls (502.04,236.3) and (503.7,237.96) .. (503.7,240) .. controls (503.7,242.04) and (502.04,243.7) .. (500,243.7) .. controls (497.96,243.7) and (496.3,242.04) .. (496.3,240) -- cycle ;
\draw  [fill={rgb, 255:red, 0; green, 0; blue, 0 }  ,fill opacity=1 ] (596.3,170) .. controls (596.3,167.96) and (597.96,166.3) .. (600,166.3) .. controls (602.04,166.3) and (603.7,167.96) .. (603.7,170) .. controls (603.7,172.04) and (602.04,173.7) .. (600,173.7) .. controls (597.96,173.7) and (596.3,172.04) .. (596.3,170) -- cycle ;
\draw  [fill={rgb, 255:red, 0; green, 0; blue, 0 }  ,fill opacity=1 ] (136.3,170) .. controls (136.3,167.96) and (137.96,166.3) .. (140,166.3) .. controls (142.04,166.3) and (143.7,167.96) .. (143.7,170) .. controls (143.7,172.04) and (142.04,173.7) .. (140,173.7) .. controls (137.96,173.7) and (136.3,172.04) .. (136.3,170) -- cycle ;
\draw    (320,160) -- (388,160) ;
\draw [shift={(390,160)}, rotate = 180] [color={rgb, 255:red, 0; green, 0; blue, 0 }  ][line width=0.75]    (10.93,-3.29) .. controls (6.95,-1.4) and (3.31,-0.3) .. (0,0) .. controls (3.31,0.3) and (6.95,1.4) .. (10.93,3.29)   ;
\draw [shift={(320,160)}, rotate = 180] [color={rgb, 255:red, 0; green, 0; blue, 0 }  ][line width=0.75]    (0,5.59) -- (0,-5.59)   ;

\draw (149,130.4) node [anchor=north west][inner sep=0.75pt]    {$R_{\alpha }$};
\draw (99,192.4) node [anchor=north west][inner sep=0.75pt]    {$R_{\beta }$};
\draw (537,130.4) node [anchor=north west][inner sep=0.75pt]    {$R'_{\alpha }$};
\draw (461,172.4) node [anchor=north west][inner sep=0.75pt]    {$R'_{\beta }$};
\draw (341,126.4) node [anchor=north west][inner sep=0.75pt]    {$\Pi_{\alpha\beta}^{\mathrm{lb}}$};
\draw (49,82.4) node [anchor=north west][inner sep=0.75pt]    {$\Phi _{4\alpha }$};
\draw (239,82.4) node [anchor=north west][inner sep=0.75pt]    {$\Phi _{3\alpha }$};
\draw (240,169.7) node [anchor=north west][inner sep=0.75pt]    {$\Phi _{2\alpha }$};
\draw (51,240.4) node [anchor=north west][inner sep=0.75pt]    {$\Phi _{1\beta }$};
\draw (142,173.4) node [anchor=north west][inner sep=0.75pt]    {$\Phi _{3\beta }$};
\draw (142,243.4) node [anchor=north west][inner sep=0.75pt]    {$\Phi _{2\beta }$};
\draw (18,147.4) node [anchor=north west][inner sep=0.75pt]    {$ \begin{array}{l}
\Phi _{1\alpha } =\\
=\Phi _{4\beta }
\end{array}$};
\draw (411,82.4) node [anchor=north west][inner sep=0.75pt]    {$\Phi '_{4\beta }$};
\draw (502,173.4) node [anchor=north west][inner sep=0.75pt]    {$\Phi '_{1\alpha }$};
\draw (462,82.4) node [anchor=north west][inner sep=0.75pt]    {$\Phi '_{3\beta } =\Phi '_{4\alpha }$};
\draw (602,173.4) node [anchor=north west][inner sep=0.75pt]    {$\Phi '_{2\alpha }$};
\draw (601,82.4) node [anchor=north west][inner sep=0.75pt]    {$\Phi '_{3\alpha }$};
\draw (502,239.7) node [anchor=north west][inner sep=0.75pt]    {$\Phi '_{2\beta }$};
\draw (411,240.4) node [anchor=north west][inner sep=0.75pt]    {$\Phi '_{1\beta }$};

\end{tikzpicture}
\nopagebreak

$$
\Phi'_{3\beta}=\Phi'_{4\alpha}=F_\alpha\circ\Phi_{3\beta},\,\Phi'_{4\beta}=\Phi_{4\alpha},\,\Phi'_{1\alpha}=\Phi_{3\beta},\quad\text{while}\quad\Phi'_{i\xi}=\Phi_{i\xi},\,i\xi\not\in\{3\beta,4\alpha,4\beta,1\alpha\}.
$$

\subsubsection*{Case $\Pi=\Pi_{\alpha\beta}^{\mathrm{le}}$, $\alpha,\beta\in\A$:}
\nopagebreak

\begin{tikzpicture}[x=0.75pt,y=0.75pt,yscale=-1,xscale=1]

\draw  [fill={rgb, 255:red, 0; green, 0; blue, 0 }  ,fill opacity=0.06 ] (80,110) -- (140,110) -- (140,170) -- (80,170) -- cycle ;
\draw  [fill={rgb, 255:red, 0; green, 0; blue, 0 }  ,fill opacity=0.06 ] (80,170) -- (240,170) -- (240,240) -- (80,240) -- cycle ;
\draw  [fill={rgb, 255:red, 0; green, 0; blue, 0 }  ,fill opacity=0.06 ] (440,110) -- (500,110) -- (500,240) -- (440,240) -- cycle ;
\draw  [fill={rgb, 255:red, 0; green, 0; blue, 0 }  ,fill opacity=0.06 ] (500,110) -- (600,110) -- (600,170) -- (500,170) -- cycle ;
\draw   (70,170) .. controls (70,164.48) and (74.48,160) .. (80,160) .. controls (85.52,160) and (90,164.48) .. (90,170) .. controls (90,175.52) and (85.52,180) .. (80,180) .. controls (74.48,180) and (70,175.52) .. (70,170) -- cycle ;
\draw   (490,110) .. controls (490,104.48) and (494.48,100) .. (500,100) .. controls (505.52,100) and (510,104.48) .. (510,110) .. controls (510,115.52) and (505.52,120) .. (500,120) .. controls (494.48,120) and (490,115.52) .. (490,110) -- cycle ;
\draw  [fill={rgb, 255:red, 0; green, 0; blue, 0 }  ,fill opacity=1 ] (76.3,110) .. controls (76.3,107.96) and (77.96,106.3) .. (80,106.3) .. controls (82.04,106.3) and (83.7,107.96) .. (83.7,110) .. controls (83.7,112.04) and (82.04,113.7) .. (80,113.7) .. controls (77.96,113.7) and (76.3,112.04) .. (76.3,110) -- cycle ;
\draw  [fill={rgb, 255:red, 0; green, 0; blue, 0 }  ,fill opacity=1 ] (136.3,110) .. controls (136.3,107.96) and (137.96,106.3) .. (140,106.3) .. controls (142.04,106.3) and (143.7,107.96) .. (143.7,110) .. controls (143.7,112.04) and (142.04,113.7) .. (140,113.7) .. controls (137.96,113.7) and (136.3,112.04) .. (136.3,110) -- cycle ;
\draw  [fill={rgb, 255:red, 0; green, 0; blue, 0 }  ,fill opacity=1 ] (76.3,170) .. controls (76.3,167.96) and (77.96,166.3) .. (80,166.3) .. controls (82.04,166.3) and (83.7,167.96) .. (83.7,170) .. controls (83.7,172.04) and (82.04,173.7) .. (80,173.7) .. controls (77.96,173.7) and (76.3,172.04) .. (76.3,170) -- cycle ;
\draw  [fill={rgb, 255:red, 0; green, 0; blue, 0 }  ,fill opacity=1 ] (236.3,170) .. controls (236.3,167.96) and (237.96,166.3) .. (240,166.3) .. controls (242.04,166.3) and (243.7,167.96) .. (243.7,170) .. controls (243.7,172.04) and (242.04,173.7) .. (240,173.7) .. controls (237.96,173.7) and (236.3,172.04) .. (236.3,170) -- cycle ;
\draw  [fill={rgb, 255:red, 0; green, 0; blue, 0 }  ,fill opacity=1 ] (236.3,240) .. controls (236.3,237.96) and (237.96,236.3) .. (240,236.3) .. controls (242.04,236.3) and (243.7,237.96) .. (243.7,240) .. controls (243.7,242.04) and (242.04,243.7) .. (240,243.7) .. controls (237.96,243.7) and (236.3,242.04) .. (236.3,240) -- cycle ;
\draw  [fill={rgb, 255:red, 0; green, 0; blue, 0 }  ,fill opacity=1 ] (76.3,240) .. controls (76.3,237.96) and (77.96,236.3) .. (80,236.3) .. controls (82.04,236.3) and (83.7,237.96) .. (83.7,240) .. controls (83.7,242.04) and (82.04,243.7) .. (80,243.7) .. controls (77.96,243.7) and (76.3,242.04) .. (76.3,240) -- cycle ;
\draw  [fill={rgb, 255:red, 0; green, 0; blue, 0 }  ,fill opacity=1 ] (436.3,110) .. controls (436.3,107.96) and (437.96,106.3) .. (440,106.3) .. controls (442.04,106.3) and (443.7,107.96) .. (443.7,110) .. controls (443.7,112.04) and (442.04,113.7) .. (440,113.7) .. controls (437.96,113.7) and (436.3,112.04) .. (436.3,110) -- cycle ;
\draw  [fill={rgb, 255:red, 0; green, 0; blue, 0 }  ,fill opacity=1 ] (496.3,110) .. controls (496.3,107.96) and (497.96,106.3) .. (500,106.3) .. controls (502.04,106.3) and (503.7,107.96) .. (503.7,110) .. controls (503.7,112.04) and (502.04,113.7) .. (500,113.7) .. controls (497.96,113.7) and (496.3,112.04) .. (496.3,110) -- cycle ;
\draw  [fill={rgb, 255:red, 0; green, 0; blue, 0 }  ,fill opacity=1 ] (596.3,110) .. controls (596.3,107.96) and (597.96,106.3) .. (600,106.3) .. controls (602.04,106.3) and (603.7,107.96) .. (603.7,110) .. controls (603.7,112.04) and (602.04,113.7) .. (600,113.7) .. controls (597.96,113.7) and (596.3,112.04) .. (596.3,110) -- cycle ;
\draw  [fill={rgb, 255:red, 0; green, 0; blue, 0 }  ,fill opacity=1 ] (436.3,240) .. controls (436.3,237.96) and (437.96,236.3) .. (440,236.3) .. controls (442.04,236.3) and (443.7,237.96) .. (443.7,240) .. controls (443.7,242.04) and (442.04,243.7) .. (440,243.7) .. controls (437.96,243.7) and (436.3,242.04) .. (436.3,240) -- cycle ;
\draw  [fill={rgb, 255:red, 0; green, 0; blue, 0 }  ,fill opacity=1 ] (496.3,170) .. controls (496.3,167.96) and (497.96,166.3) .. (500,166.3) .. controls (502.04,166.3) and (503.7,167.96) .. (503.7,170) .. controls (503.7,172.04) and (502.04,173.7) .. (500,173.7) .. controls (497.96,173.7) and (496.3,172.04) .. (496.3,170) -- cycle ;
\draw  [fill={rgb, 255:red, 0; green, 0; blue, 0 }  ,fill opacity=1 ] (496.3,240) .. controls (496.3,237.96) and (497.96,236.3) .. (500,236.3) .. controls (502.04,236.3) and (503.7,237.96) .. (503.7,240) .. controls (503.7,242.04) and (502.04,243.7) .. (500,243.7) .. controls (497.96,243.7) and (496.3,242.04) .. (496.3,240) -- cycle ;
\draw  [fill={rgb, 255:red, 0; green, 0; blue, 0 }  ,fill opacity=1 ] (596.3,170) .. controls (596.3,167.96) and (597.96,166.3) .. (600,166.3) .. controls (602.04,166.3) and (603.7,167.96) .. (603.7,170) .. controls (603.7,172.04) and (602.04,173.7) .. (600,173.7) .. controls (597.96,173.7) and (596.3,172.04) .. (596.3,170) -- cycle ;
\draw  [fill={rgb, 255:red, 0; green, 0; blue, 0 }  ,fill opacity=1 ] (136.3,170) .. controls (136.3,167.96) and (137.96,166.3) .. (140,166.3) .. controls (142.04,166.3) and (143.7,167.96) .. (143.7,170) .. controls (143.7,172.04) and (142.04,173.7) .. (140,173.7) .. controls (137.96,173.7) and (136.3,172.04) .. (136.3,170) -- cycle ;
\draw    (320,160) -- (388,160) ;
\draw [shift={(390,160)}, rotate = 180] [color={rgb, 255:red, 0; green, 0; blue, 0 }  ][line width=0.75]    (10.93,-3.29) .. controls (6.95,-1.4) and (3.31,-0.3) .. (0,0) .. controls (3.31,0.3) and (6.95,1.4) .. (10.93,3.29)   ;
\draw [shift={(320,160)}, rotate = 180] [color={rgb, 255:red, 0; green, 0; blue, 0 }  ][line width=0.75]    (0,5.59) -- (0,-5.59)   ;

\draw (101,130.4) node [anchor=north west][inner sep=0.75pt]    {$R_{\alpha }$};
\draw (141,192.4) node [anchor=north west][inner sep=0.75pt]    {$R_{\beta }$};
\draw (461,162.4) node [anchor=north west][inner sep=0.75pt]    {$R'_{\alpha }$};
\draw (541,130.4) node [anchor=north west][inner sep=0.75pt]    {$R'_{\beta }$};
\draw (341,126.4) node [anchor=north west][inner sep=0.75pt]    {$\Pi_{\alpha\beta}^{\mathrm{le}}$};
\draw (50,82.4) node [anchor=north west][inner sep=0.75pt]    {$\Phi _{4\alpha }$};
\draw (141,82.4) node [anchor=north west][inner sep=0.75pt]    {$\Phi _{3\alpha }$};
\draw (141,150.4) node [anchor=north west][inner sep=0.75pt]    {$\Phi _{2\alpha }$};
\draw (51,242.4) node [anchor=north west][inner sep=0.75pt]    {$\Phi _{1\beta }$};
\draw (241,150.4) node [anchor=north west][inner sep=0.75pt]    {$\Phi _{3\beta }$};
\draw (242,243.4) node [anchor=north west][inner sep=0.75pt]    {$\Phi _{2\beta }$};
\draw (18,147.4) node [anchor=north west][inner sep=0.75pt]    {$ \begin{array}{l}
\Phi _{1\alpha } =\\
=\Phi _{4\beta }
\end{array}$};
\draw (411,82.4) node [anchor=north west][inner sep=0.75pt]    {$\Phi '_{4\alpha }$};
\draw (410,240.4) node [anchor=north west][inner sep=0.75pt]    {$\Phi '_{1\alpha }$};
\draw (462,82.4) node [anchor=north west][inner sep=0.75pt]    {$\Phi '_{3\alpha } =\Phi '_{4\beta }$};
\draw (501,240.4) node [anchor=north west][inner sep=0.75pt]    {$\Phi '_{2\alpha }$};
\draw (601,82.4) node [anchor=north west][inner sep=0.75pt]    {$\Phi '_{3\beta }$};
\draw (602,173.4) node [anchor=north west][inner sep=0.75pt]    {$\Phi '_{2\beta }$};
\draw (505.7,173.4) node [anchor=north west][inner sep=0.75pt]    {$\Phi '_{1\beta }$};

\end{tikzpicture}

\nopagebreak

$$
\Phi'_{1\beta}=\Phi'_{2\alpha}=F_\beta^{-1}\circ\Phi_{2\alpha},\,\Phi'_{1\alpha}=\Phi_{1\beta},\,\Phi'_{4\beta}=\Phi_{2\alpha},\quad\text{while}\quad\Phi'_{i\xi}=\Phi_{i\xi},\,i\xi\not\in\{1\beta,2\alpha,1\alpha,4\beta\}.
$$

The most important conclusion from the results obtained in this subsection is that when we visualize the nodal functions as written at the corners of rectangles on the toroidal patchwork surface, then---in each of the four cases of applying elementary induction steps to such a surface and to an array of such functions---the corresponding nodal functions are preserved at all nodes, which were preserved by the given step (that is, actually at all nodes except for the one that disappeared and one newly born), and exactly two functions from the array change their names. Namely: in the case $\Pi=\Pi_{\alpha\beta}^{\mathrm{rb}}$, the nodal functions $\Phi_{3\alpha}$ and $\Phi_{4\beta}$ change their names to $\Phi'_{3\beta}$ and $\Phi'_{2\alpha}$ respectively; in the case $\Pi=\Pi_{\alpha\beta}^{\mathrm{re}}$, the functions $\Phi_{2\beta}$ and $\Phi_{1\alpha}$ change their names to $\Phi'_{2\alpha}$ and $\Phi'_{3\beta}$, respectively; in the case $\Pi=\Pi_{\alpha\beta}^{\mathrm{lb}}$, the functions $\Phi_{4\alpha}$ and $\Phi_{3\beta}$ change their names to $\Phi'_{4\beta}$ and $\Phi'_{1\alpha}$ respectively; and in the case $\Pi=\Pi_{\alpha\beta}^{\mathrm{le}}$, the functions $\Phi_{1\beta}$ and $\Phi_{2\alpha}$ change their names to $\Phi'_{1\alpha}$ and $\Phi'_{4\beta}$ respectively. However, all these nodal functions remain unchanged and are written at the very same corners on the patchwork surface of the natural extension at which corners they were written before application of the induction step $\Pi$.

\subsection{Proof of Theorem~\ref{theorem:1}}
\label{subsect:proof}

Let us recall once again that, according to Subsection~\ref{subsect:Phi}, commuting collections $\Fect$ are in one-to-one correspondence with arrays of nodal functions $\Phict$, and we will conduct the proof of the theorem in terms of nodal functions. In Subsection~\ref{subsect:induction_Phi} we described how an array of nodal functions changes under the action of an elementary induction step $\Pi$ (in all four variants of such a step), and in Subsection~\ref{subsect:dual_surface} we described how such an array changes under the action of the duality involution $\Inv$. Now we simply collect all the information obtained above and show that the equality
$$
\Pi^*\Inv\Pi\Phict=\Inv\Phict
$$
indeed holds for all arrays $\Phict$ such that the elementary induction steps $\Pi$ and $\Pi^*$ are allowed for the corresponding schemes---that is, the step $\Pi$ is allowed for the scheme $\sigma$ corresponding to the array $\Phict$, while the step $\Pi^*$ is allowed for the scheme $\sigma'^*=\Inv\Pi\sigma$ corresponding to the array $\Phict'^*=\Inv\Pi\Phict$. This will prove the theorem according to the explanation in Subsection~\ref{subsect:theorem1}.

Let us again consider in detail the case of the elementary induction step $\Pi=\Pi_{\alpha\beta}^{\mathrm{rb}}$; the cases of the other three steps $\Pi_{\alpha\beta}^{\mathrm{re}}$, $\Pi_{\alpha\beta}^{\mathrm{lb}}$ and $\Pi_{\alpha\beta}^{\mathrm{le}}$, $\alpha,\beta\in\A$, are considered similarly. For visualization of our reasoning, the figure below shows the transformation of the corresponding part of the patchwork surface of the natural extension $[(\sigma,\uect), (\sigma^*,\yect)]$, which we visualized in the previous subsection. Recall that under the duality involution this surface is simply reflected symmetrically across the identity line $y=u$, while the nodal functions are transposed as matrices from the space $\psl$.

\vspace{20pt}

\begin{tikzpicture}[x=0.75pt,y=0.75pt,yscale=-1,xscale=1]

\draw  [fill={rgb, 255:red, 0; green, 0; blue, 0 }  ,fill opacity=0.06 ] (100,40) -- (260,40) -- (260,100) -- (100,100) -- cycle ;
\draw  [fill={rgb, 255:red, 0; green, 0; blue, 0 }  ,fill opacity=0.06 ] (160,100) -- (260,100) -- (260,170) -- (160,170) -- cycle ;
\draw  [fill={rgb, 255:red, 0; green, 0; blue, 0 }  ,fill opacity=0.06 ] (431,40) -- (491,40) -- (491,100) -- (431,100) -- cycle ;
\draw  [fill={rgb, 255:red, 0; green, 0; blue, 0 }  ,fill opacity=0.06 ] (491,40) -- (591,40) -- (591,170) -- (491,170) -- cycle ;
\draw   (250,100) .. controls (250,94.48) and (254.48,90) .. (260,90) .. controls (265.52,90) and (270,94.48) .. (270,100) .. controls (270,105.52) and (265.52,110) .. (260,110) .. controls (254.48,110) and (250,105.52) .. (250,100) -- cycle ;
\draw   (481,40) .. controls (481,34.48) and (485.48,30) .. (491,30) .. controls (496.52,30) and (501,34.48) .. (501,40) .. controls (501,45.52) and (496.52,50) .. (491,50) .. controls (485.48,50) and (481,45.52) .. (481,40) -- cycle ;
\draw  [fill={rgb, 255:red, 0; green, 0; blue, 0 }  ,fill opacity=1 ] (96.3,40) .. controls (96.3,37.96) and (97.96,36.3) .. (100,36.3) .. controls (102.04,36.3) and (103.7,37.96) .. (103.7,40) .. controls (103.7,42.04) and (102.04,43.7) .. (100,43.7) .. controls (97.96,43.7) and (96.3,42.04) .. (96.3,40) -- cycle ;
\draw  [fill={rgb, 255:red, 0; green, 0; blue, 0 }  ,fill opacity=1 ] (256.3,40) .. controls (256.3,37.96) and (257.96,36.3) .. (260,36.3) .. controls (262.04,36.3) and (263.7,37.96) .. (263.7,40) .. controls (263.7,42.04) and (262.04,43.7) .. (260,43.7) .. controls (257.96,43.7) and (256.3,42.04) .. (256.3,40) -- cycle ;
\draw  [fill={rgb, 255:red, 0; green, 0; blue, 0 }  ,fill opacity=1 ] (96.3,100) .. controls (96.3,97.96) and (97.96,96.3) .. (100,96.3) .. controls (102.04,96.3) and (103.7,97.96) .. (103.7,100) .. controls (103.7,102.04) and (102.04,103.7) .. (100,103.7) .. controls (97.96,103.7) and (96.3,102.04) .. (96.3,100) -- cycle ;
\draw  [fill={rgb, 255:red, 0; green, 0; blue, 0 }  ,fill opacity=1 ] (256.3,100) .. controls (256.3,97.96) and (257.96,96.3) .. (260,96.3) .. controls (262.04,96.3) and (263.7,97.96) .. (263.7,100) .. controls (263.7,102.04) and (262.04,103.7) .. (260,103.7) .. controls (257.96,103.7) and (256.3,102.04) .. (256.3,100) -- cycle ;
\draw  [fill={rgb, 255:red, 0; green, 0; blue, 0 }  ,fill opacity=1 ] (156.3,170) .. controls (156.3,167.96) and (157.96,166.3) .. (160,166.3) .. controls (162.04,166.3) and (163.7,167.96) .. (163.7,170) .. controls (163.7,172.04) and (162.04,173.7) .. (160,173.7) .. controls (157.96,173.7) and (156.3,172.04) .. (156.3,170) -- cycle ;
\draw  [fill={rgb, 255:red, 0; green, 0; blue, 0 }  ,fill opacity=1 ] (256.3,170) .. controls (256.3,167.96) and (257.96,166.3) .. (260,166.3) .. controls (262.04,166.3) and (263.7,167.96) .. (263.7,170) .. controls (263.7,172.04) and (262.04,173.7) .. (260,173.7) .. controls (257.96,173.7) and (256.3,172.04) .. (256.3,170) -- cycle ;
\draw  [fill={rgb, 255:red, 0; green, 0; blue, 0 }  ,fill opacity=1 ] (427.3,40) .. controls (427.3,37.96) and (428.96,36.3) .. (431,36.3) .. controls (433.04,36.3) and (434.7,37.96) .. (434.7,40) .. controls (434.7,42.04) and (433.04,43.7) .. (431,43.7) .. controls (428.96,43.7) and (427.3,42.04) .. (427.3,40) -- cycle ;
\draw  [fill={rgb, 255:red, 0; green, 0; blue, 0 }  ,fill opacity=1 ] (487.3,40) .. controls (487.3,37.96) and (488.96,36.3) .. (491,36.3) .. controls (493.04,36.3) and (494.7,37.96) .. (494.7,40) .. controls (494.7,42.04) and (493.04,43.7) .. (491,43.7) .. controls (488.96,43.7) and (487.3,42.04) .. (487.3,40) -- cycle ;
\draw  [fill={rgb, 255:red, 0; green, 0; blue, 0 }  ,fill opacity=1 ] (587.3,40) .. controls (587.3,37.96) and (588.96,36.3) .. (591,36.3) .. controls (593.04,36.3) and (594.7,37.96) .. (594.7,40) .. controls (594.7,42.04) and (593.04,43.7) .. (591,43.7) .. controls (588.96,43.7) and (587.3,42.04) .. (587.3,40) -- cycle ;
\draw  [fill={rgb, 255:red, 0; green, 0; blue, 0 }  ,fill opacity=1 ] (427.3,100) .. controls (427.3,97.96) and (428.96,96.3) .. (431,96.3) .. controls (433.04,96.3) and (434.7,97.96) .. (434.7,100) .. controls (434.7,102.04) and (433.04,103.7) .. (431,103.7) .. controls (428.96,103.7) and (427.3,102.04) .. (427.3,100) -- cycle ;
\draw  [fill={rgb, 255:red, 0; green, 0; blue, 0 }  ,fill opacity=1 ] (487.3,100) .. controls (487.3,97.96) and (488.96,96.3) .. (491,96.3) .. controls (493.04,96.3) and (494.7,97.96) .. (494.7,100) .. controls (494.7,102.04) and (493.04,103.7) .. (491,103.7) .. controls (488.96,103.7) and (487.3,102.04) .. (487.3,100) -- cycle ;
\draw  [fill={rgb, 255:red, 0; green, 0; blue, 0 }  ,fill opacity=1 ] (487.3,170) .. controls (487.3,167.96) and (488.96,166.3) .. (491,166.3) .. controls (493.04,166.3) and (494.7,167.96) .. (494.7,170) .. controls (494.7,172.04) and (493.04,173.7) .. (491,173.7) .. controls (488.96,173.7) and (487.3,172.04) .. (487.3,170) -- cycle ;
\draw  [fill={rgb, 255:red, 0; green, 0; blue, 0 }  ,fill opacity=1 ] (587.3,170) .. controls (587.3,167.96) and (588.96,166.3) .. (591,166.3) .. controls (593.04,166.3) and (594.7,167.96) .. (594.7,170) .. controls (594.7,172.04) and (593.04,173.7) .. (591,173.7) .. controls (588.96,173.7) and (587.3,172.04) .. (587.3,170) -- cycle ;
\draw  [fill={rgb, 255:red, 0; green, 0; blue, 0 }  ,fill opacity=1 ] (156.3,100) .. controls (156.3,97.96) and (157.96,96.3) .. (160,96.3) .. controls (162.04,96.3) and (163.7,97.96) .. (163.7,100) .. controls (163.7,102.04) and (162.04,103.7) .. (160,103.7) .. controls (157.96,103.7) and (156.3,102.04) .. (156.3,100) -- cycle ;
\draw    (310,90) -- (378,90) ;
\draw [shift={(380,90)}, rotate = 180] [color={rgb, 255:red, 0; green, 0; blue, 0 }  ][line width=0.75]    (10.93,-3.29) .. controls (6.95,-1.4) and (3.31,-0.3) .. (0,0) .. controls (3.31,0.3) and (6.95,1.4) .. (10.93,3.29)   ;
\draw [shift={(310,90)}, rotate = 180] [color={rgb, 255:red, 0; green, 0; blue, 0 }  ][line width=0.75]    (0,5.59) -- (0,-5.59)   ;
\draw  [fill={rgb, 255:red, 0; green, 0; blue, 0 }  ,fill opacity=0.06 ] (445,290) -- (575,290) -- (575,390) -- (445,390) -- cycle ;
\draw  [fill={rgb, 255:red, 0; green, 0; blue, 0 }  ,fill opacity=0.06 ] (515,390) -- (575,390) -- (575,450) -- (515,450) -- cycle ;
\draw  [fill={rgb, 255:red, 0; green, 0; blue, 0 }  ,fill opacity=1 ] (571.3,290) .. controls (571.3,287.96) and (572.96,286.3) .. (575,286.3) .. controls (577.04,286.3) and (578.7,287.96) .. (578.7,290) .. controls (578.7,292.04) and (577.04,293.7) .. (575,293.7) .. controls (572.96,293.7) and (571.3,292.04) .. (571.3,290) -- cycle ;
\draw  [fill={rgb, 255:red, 0; green, 0; blue, 0 }  ,fill opacity=1 ] (441.3,290) .. controls (441.3,287.96) and (442.96,286.3) .. (445,286.3) .. controls (447.04,286.3) and (448.7,287.96) .. (448.7,290) .. controls (448.7,292.04) and (447.04,293.7) .. (445,293.7) .. controls (442.96,293.7) and (441.3,292.04) .. (441.3,290) -- cycle ;
\draw  [fill={rgb, 255:red, 0; green, 0; blue, 0 }  ,fill opacity=1 ] (571.3,390) .. controls (571.3,387.96) and (572.96,386.3) .. (575,386.3) .. controls (577.04,386.3) and (578.7,387.96) .. (578.7,390) .. controls (578.7,392.04) and (577.04,393.7) .. (575,393.7) .. controls (572.96,393.7) and (571.3,392.04) .. (571.3,390) -- cycle ;
\draw  [fill={rgb, 255:red, 0; green, 0; blue, 0 }  ,fill opacity=1 ] (441.3,390) .. controls (441.3,387.96) and (442.96,386.3) .. (445,386.3) .. controls (447.04,386.3) and (448.7,387.96) .. (448.7,390) .. controls (448.7,392.04) and (447.04,393.7) .. (445,393.7) .. controls (442.96,393.7) and (441.3,392.04) .. (441.3,390) -- cycle ;
\draw  [fill={rgb, 255:red, 0; green, 0; blue, 0 }  ,fill opacity=1 ] (511.3,390) .. controls (511.3,387.96) and (512.96,386.3) .. (515,386.3) .. controls (517.04,386.3) and (518.7,387.96) .. (518.7,390) .. controls (518.7,392.04) and (517.04,393.7) .. (515,393.7) .. controls (512.96,393.7) and (511.3,392.04) .. (511.3,390) -- cycle ;
\draw  [fill={rgb, 255:red, 0; green, 0; blue, 0 }  ,fill opacity=1 ] (511.3,450) .. controls (511.3,447.96) and (512.96,446.3) .. (515,446.3) .. controls (517.04,446.3) and (518.7,447.96) .. (518.7,450) .. controls (518.7,452.04) and (517.04,453.7) .. (515,453.7) .. controls (512.96,453.7) and (511.3,452.04) .. (511.3,450) -- cycle ;
\draw  [fill={rgb, 255:red, 0; green, 0; blue, 0 }  ,fill opacity=1 ] (571.3,450) .. controls (571.3,447.96) and (572.96,446.3) .. (575,446.3) .. controls (577.04,446.3) and (578.7,447.96) .. (578.7,450) .. controls (578.7,452.04) and (577.04,453.7) .. (575,453.7) .. controls (572.96,453.7) and (571.3,452.04) .. (571.3,450) -- cycle ;
\draw    (510,202) -- (510,258) ;
\draw [shift={(510,260)}, rotate = 270] [color={rgb, 255:red, 0; green, 0; blue, 0 }  ][line width=0.75]    (10.93,-3.29) .. controls (6.95,-1.4) and (3.31,-0.3) .. (0,0) .. controls (3.31,0.3) and (6.95,1.4) .. (10.93,3.29)   ;
\draw [shift={(510,200)}, rotate = 90] [color={rgb, 255:red, 0; green, 0; blue, 0 }  ][line width=0.75]    (10.93,-3.29) .. controls (6.95,-1.4) and (3.31,-0.3) .. (0,0) .. controls (3.31,0.3) and (6.95,1.4) .. (10.93,3.29)   ;
\draw  [fill={rgb, 255:red, 0; green, 0; blue, 0 }  ,fill opacity=0.06 ] (120,290) -- (190,290) -- (190,390) -- (120,390) -- cycle ;
\draw  [fill={rgb, 255:red, 0; green, 0; blue, 0 }  ,fill opacity=0.06 ] (190,290) -- (250,290) -- (250,450) -- (190,450) -- cycle ;
\draw  [fill={rgb, 255:red, 0; green, 0; blue, 0 }  ,fill opacity=1 ] (246.3,290) .. controls (246.3,287.96) and (247.96,286.3) .. (250,286.3) .. controls (252.04,286.3) and (253.7,287.96) .. (253.7,290) .. controls (253.7,292.04) and (252.04,293.7) .. (250,293.7) .. controls (247.96,293.7) and (246.3,292.04) .. (246.3,290) -- cycle ;
\draw  [fill={rgb, 255:red, 0; green, 0; blue, 0 }  ,fill opacity=1 ] (116.3,290) .. controls (116.3,287.96) and (117.96,286.3) .. (120,286.3) .. controls (122.04,286.3) and (123.7,287.96) .. (123.7,290) .. controls (123.7,292.04) and (122.04,293.7) .. (120,293.7) .. controls (117.96,293.7) and (116.3,292.04) .. (116.3,290) -- cycle ;
\draw  [fill={rgb, 255:red, 0; green, 0; blue, 0 }  ,fill opacity=1 ] (186.3,290) .. controls (186.3,287.96) and (187.96,286.3) .. (190,286.3) .. controls (192.04,286.3) and (193.7,287.96) .. (193.7,290) .. controls (193.7,292.04) and (192.04,293.7) .. (190,293.7) .. controls (187.96,293.7) and (186.3,292.04) .. (186.3,290) -- cycle ;
\draw  [fill={rgb, 255:red, 0; green, 0; blue, 0 }  ,fill opacity=1 ] (116.3,390) .. controls (116.3,387.96) and (117.96,386.3) .. (120,386.3) .. controls (122.04,386.3) and (123.7,387.96) .. (123.7,390) .. controls (123.7,392.04) and (122.04,393.7) .. (120,393.7) .. controls (117.96,393.7) and (116.3,392.04) .. (116.3,390) -- cycle ;
\draw  [fill={rgb, 255:red, 0; green, 0; blue, 0 }  ,fill opacity=1 ] (186.3,390) .. controls (186.3,387.96) and (187.96,386.3) .. (190,386.3) .. controls (192.04,386.3) and (193.7,387.96) .. (193.7,390) .. controls (193.7,392.04) and (192.04,393.7) .. (190,393.7) .. controls (187.96,393.7) and (186.3,392.04) .. (186.3,390) -- cycle ;
\draw  [fill={rgb, 255:red, 0; green, 0; blue, 0 }  ,fill opacity=1 ] (186.3,450) .. controls (186.3,447.96) and (187.96,446.3) .. (190,446.3) .. controls (192.04,446.3) and (193.7,447.96) .. (193.7,450) .. controls (193.7,452.04) and (192.04,453.7) .. (190,453.7) .. controls (187.96,453.7) and (186.3,452.04) .. (186.3,450) -- cycle ;
\draw  [fill={rgb, 255:red, 0; green, 0; blue, 0 }  ,fill opacity=1 ] (246.3,450) .. controls (246.3,447.96) and (247.96,446.3) .. (250,446.3) .. controls (252.04,446.3) and (253.7,447.96) .. (253.7,450) .. controls (253.7,452.04) and (252.04,453.7) .. (250,453.7) .. controls (247.96,453.7) and (246.3,452.04) .. (246.3,450) -- cycle ;
\draw    (180,202) -- (180,258) ;
\draw [shift={(180,260)}, rotate = 270] [color={rgb, 255:red, 0; green, 0; blue, 0 }  ][line width=0.75]    (10.93,-3.29) .. controls (6.95,-1.4) and (3.31,-0.3) .. (0,0) .. controls (3.31,0.3) and (6.95,1.4) .. (10.93,3.29)   ;
\draw [shift={(180,200)}, rotate = 90] [color={rgb, 255:red, 0; green, 0; blue, 0 }  ][line width=0.75]    (10.93,-3.29) .. controls (6.95,-1.4) and (3.31,-0.3) .. (0,0) .. controls (3.31,0.3) and (6.95,1.4) .. (10.93,3.29)   ;
\draw   (565,390) .. controls (565,384.48) and (569.48,380) .. (575,380) .. controls (580.52,380) and (585,384.48) .. (585,390) .. controls (585,395.52) and (580.52,400) .. (575,400) .. controls (569.48,400) and (565,395.52) .. (565,390) -- cycle ;
\draw   (180,290) .. controls (180,284.48) and (184.48,280) .. (190,280) .. controls (195.52,280) and (200,284.48) .. (200,290) .. controls (200,295.52) and (195.52,300) .. (190,300) .. controls (184.48,300) and (180,295.52) .. (180,290) -- cycle ;
\draw    (312,370) -- (380,370) ;
\draw [shift={(380,370)}, rotate = 180] [color={rgb, 255:red, 0; green, 0; blue, 0 }  ][line width=0.75]    (0,5.59) -- (0,-5.59)   ;
\draw [shift={(310,370)}, rotate = 0] [color={rgb, 255:red, 0; green, 0; blue, 0 }  ][line width=0.75]    (10.93,-3.29) .. controls (6.95,-1.4) and (3.31,-0.3) .. (0,0) .. controls (3.31,0.3) and (6.95,1.4) .. (10.93,3.29)   ;

\draw (177,60.4) node [anchor=north west][inner sep=0.75pt]    {$R_{\alpha }$};
\draw (198,130.4) node [anchor=north west][inner sep=0.75pt]    {$R_{\beta }$};
\draw (452.99,57.57) node [anchor=north west][inner sep=0.75pt]    {$R'_{\alpha }$};
\draw (537.99,102.79) node [anchor=north west][inner sep=0.75pt]    {$R'_{\beta }$};
\draw (331,56.4) node [anchor=north west][inner sep=0.75pt]    {$\Pi _{\alpha \beta }^\mathrm{rb}$};
\draw (71,12.4) node [anchor=north west][inner sep=0.75pt]    {$\Phi _{4\alpha }$};
\draw (261,12.4) node [anchor=north west][inner sep=0.75pt]    {$\Phi _{3\alpha }$};
\draw (71,102.4) node [anchor=north west][inner sep=0.75pt]    {$\Phi _{1\alpha }$};
\draw (131,170.4) node [anchor=north west][inner sep=0.75pt]    {$\Phi _{1\beta }$};
\draw (131,102.4) node [anchor=north west][inner sep=0.75pt]    {$\Phi _{4\beta }$};
\draw (262,171.9) node [anchor=north west][inner sep=0.75pt]    {$\Phi _{2\beta }$};
\draw (401,12.4) node [anchor=north west][inner sep=0.75pt]    {$\Phi '_{4\alpha }$};
\draw (401,102.4) node [anchor=north west][inner sep=0.75pt]    {$\Phi '_{1\alpha }$};
\draw (461,102.4) node [anchor=north west][inner sep=0.75pt]    {$\Phi '_{2\alpha }$};
\draw (592,12.4) node [anchor=north west][inner sep=0.75pt]    {$\Phi '_{3\beta }$};
\draw (592,170.4) node [anchor=north west][inner sep=0.75pt]    {$\Phi '_{2\beta }$};
\draw (462,170.4) node [anchor=north west][inner sep=0.75pt]    {$\Phi '_{1\beta }$};
\draw (506,332.4) node [anchor=north west][inner sep=0.75pt]    {$R^{\prime *}_{\beta }$};
\draw (536,412.4) node [anchor=north west][inner sep=0.75pt]    {$R^{\prime *}_{\alpha }$};
\draw (411,262.4) node [anchor=north west][inner sep=0.75pt]    {$\Phi ^{\prime *}_{4\beta }$};
\draw (576,262.4) node [anchor=north west][inner sep=0.75pt]    {$\Phi ^{\prime *}_{3\beta }$};
\draw (411,386.4) node [anchor=north west][inner sep=0.75pt]    {$\Phi ^{\prime *}_{1\beta }$};
\draw (480,452.4) node [anchor=north west][inner sep=0.75pt]    {$\Phi ^{\prime *}_{1\alpha }$};
\draw (576,452.4) node [anchor=north west][inner sep=0.75pt]    {$\Phi ^{\prime *}_{2\alpha }$};
\draw (480,392.4) node [anchor=north west][inner sep=0.75pt]    {$\Phi ^{\prime *}_{4\alpha }$};
\draw (525,222.4) node [anchor=north west][inner sep=0.75pt]    {$\mathcal{I}$};
\draw (143,332.4) node [anchor=north west][inner sep=0.75pt]    {$R_{\beta }^{*}$};
\draw (211,352.4) node [anchor=north west][inner sep=0.75pt]    {$R_{\alpha }^{*}$};
\draw (91,262.4) node [anchor=north west][inner sep=0.75pt]    {$\Phi _{4\beta }^{*}$};
\draw (251,262.4) node [anchor=north west][inner sep=0.75pt]    {$\Phi _{3\alpha }^{*}$};
\draw (91,392.4) node [anchor=north west][inner sep=0.75pt]    {$\Phi _{1\beta }^{*}$};
\draw (160,452.4) node [anchor=north west][inner sep=0.75pt]    {$\Phi _{1\alpha }^{*}$};
\draw (251,452.4) node [anchor=north west][inner sep=0.75pt]    {$\Phi _{2\alpha }^{*}$};
\draw (160,392.4) node [anchor=north west][inner sep=0.75pt]    {$\Phi _{2\beta }^{*}$};
\draw (195,222.4) node [anchor=north west][inner sep=0.75pt]    {$\mathcal{I}$};
\draw (331,336.4) node [anchor=north west][inner sep=0.75pt]    {$\Pi _{\beta \alpha }^\mathrm{rb}$};

\end{tikzpicture}
\vspace{20pt}

We do not even need to follow the transformations of those nodal functions which are written at the corners meeting at the nodes marked with the small circles, because when three nodal functions are defined at the corners of some rectangle, the fourth is determined automatically: indeed, for any array $\Phict$ we have the relations $\Phi_{4\xi}\circ\Phi^{-1}_{1\xi}=\Phi_{3\xi}\circ\Phi^{-1}_{2\xi}$, $\xi\in\A$, according to Rule~3) from Subsection~\ref{subsect:Phi}. In fact, we could finish the proof here, because it has already been noted that when the induction step is applied, the nodal functions are preserved without change at all preserved nodes, while under the duality operation they are transposed, and everything is clear from the figure. But as a formality let us write down the transformations of these functions according to the formulas obtained in Subsections~\ref{subsect:induction_Phi} and \ref{subsect:dual_surface}.

Thus, according to Subsection~\ref{subsect:induction_Phi}, for the array $\Phict'=\Pi\Phict=\Pi_{\alpha\beta}^{\mathrm{rb}}\Phict$ we have the equalities: $\Phi'_{3\beta}=\Phi_{3\alpha}$, $\Phi'_{2\alpha}=\Phi_{4\beta}$, while $\Phi'_{i\xi}=\Phi_{i\xi}$, $i\xi\not\in\{4\beta,3\alpha,3\beta,2\alpha\}$.

From these equalities, according to Subsection~\ref{subsect:dual_surface}, for the array $\Phict'^*=\Inv\Phict'=\Inv\Pi\Phict$ we obtain the following equalities: $\Phi'^*_{1\xi}=(\Phi'_{1\xi})^\intercal=\Phi^\intercal_{1\xi}$ for all $\xi\in\A$; $\Phi'^*_{2\xi}=(\Phi'_{4\xi})^\intercal=\Phi^\intercal_{4\xi}$ for all $\xi\ne\beta$; $\Phi'^*_{3\beta}=(\Phi'_{3\beta})^\intercal=\Phi^\intercal_{3\alpha}$, while $\Phi'^*_{3\xi}=(\Phi'_{3\xi})^\intercal=\Phi^\intercal_{3\xi}$ for all $\xi\not\in\{\alpha,\beta\}$; $\Phi'^*_{4\alpha}=(\Phi'_{2\alpha})^\intercal=\Phi^\intercal_{4\beta}$, while $\Phi'^*_{4\xi}=(\Phi'_{2\xi})^\intercal=\Phi^\intercal_{2\xi}$ for all $\xi\ne\alpha$.

And from these equalities, according to Subsection~\ref{subsect:induction_Phi}, for the array $\Phict^{\prime * \prime}=\Pi^*\Phict'^*=\Pi_{\beta\alpha}^{\mathrm{rb}}\Phict'^*$ we obtain the following: $\Phi^{\prime * \prime}_{1\xi}=\Phi'^*_{1\xi}=\Phi^\intercal_{1\xi}$ for all $\xi\in\A$; $\Phi^{\prime * \prime}_{2\beta}=\Phi'^*_{4\alpha}=\Phi^\intercal_{4\beta}$, while $\Phi^{\prime * \prime}_{2\xi}=\Phi'^*_{2\xi}=\Phi^\intercal_{4\xi}$ for all $\xi\ne\beta$ (thus finally we have $\Phi^{\prime * \prime}_{2\xi}=\Phi^\intercal_{4\xi}$ for all $\xi\in\A$); $\Phi^{\prime * \prime}_{3\alpha}=\Phi'^*_{3\beta}=\Phi^\intercal_{3\alpha}$, while $\Phi^{\prime * \prime}_{3\xi}=\Phi'^*_{3\xi}=\Phi^\intercal_{3\xi}$ for all $\xi\not\in\{\alpha,\beta\}$ (thus finally we have $\Phi^{\prime * \prime}_{3\xi}=\Phi^\intercal_{3\xi}$ for all $\xi\ne\beta$); $\Phi^{\prime * \prime}_{4\xi}=\Phi'^*_{4\xi}=\Phi^\intercal_{2\xi}$ for all $\xi\ne\alpha$.

On the other hand, according to Subsection~\ref{subsect:dual_surface}, for the array $\Phict^*=\Inv\Phict$ we have the equalities $\Phi^*_{1\xi}=\Phi^\intercal_{1\xi}$, $\Phi^*_{2\xi}=\Phi^\intercal_{4\xi}$, $\Phi^*_{3\xi}=\Phi^\intercal_{3\xi}$, $\Phi^*_{4\xi}=\Phi^\intercal_{2\xi}$ for all $\xi\in\A$.

Comparing the obtained formulas for the elements of the arrays $\Phict^{\prime * \prime}$ and $\Phict^*$, we see that they coincide except for the functions with subscripts $3\beta$ and $4\alpha$. But, as we already noted above, according to Rule~3) from Subsection~\ref{subsect:Phi}, any three nodal functions from one rectangle uniquely determine the fourth function; in the present case, we obtain $\Phi^{\prime * \prime}_{3\beta}=\Phi^{\prime * \prime}_{4\beta}\circ(\Phi^{\prime * \prime}_{1\beta})^{-1}\circ\Phi^{\prime * \prime}_{2\beta}=\Phi^*_{4\beta}\circ(\Phi^*_{1\beta})^{-1}\circ\Phi^*_{2\beta}=\Phi^*_{3\beta}$, and similarly $\Phi^{\prime * \prime}_{4\alpha}=\Phi^*_{4\alpha}$. Thus, we have shown the identical equality of the arrays $\Phict^{\prime * \prime}=\Phict^*$, which proves the theorem for the elementary induction step $\Pi=\Pi_{\alpha\beta}^{\mathrm{rb}}$. For the remaining three steps the proof is completely similar (obviously, one need to take into account the assumption that the steps are not prohibited).

The theorem is proved.

\section{Applicability of the proposed approach to the study of circle diffeomorphisms with breaks}
\label{sect:verification}

In this section we will present arguments supporting our claim that the proposed approach is indeed applicable to the study of circle diffeomorphisms with breaks rather then being a merely speculative construction.

\subsection{Verification by the case $N=1$.}

Here we will show that the duality, which was successfully applied to prove rigidity theorems for circle diffeomorphisms with a single break in~\cite{Teplinsky08,KhaninTeplinsky13}, is the same duality as the one defined by us here in Section~\ref{sect:duality}, up to certain simple changes of coordinates.

For this purpose, let us rewrite in our current notation the commuting collection appearing in the works mentioned above (in the present case we are dealing with a \emph{commuting pair}, since there are only two functions). On the two-element alphabet $\A=\{\alpha,\omega\}$, consider the rotational scheme of special type
$$
\sigma=\{(\alpha\be,\omega\be,\alpha\en,\omega\en)\}=\left\{\left[\begin{array}{cc}
\alpha & \omega\\
\omega & \alpha \end{array}\right]\right\}, 
$$
which consists of a single cycle and corresponds to exchanging two adjacent intervals. The fixed element $\omega\be$ corresponds to the break point $x_{\omega\be}=0$ with break size $\nu=c^2$, $c\ne1$. The linear-fractional functions acting on the corresponding intervals, considered as elements of $\psl$, are given by
\begin{equation}\label{eq:N=1}
F_\alpha=\left(\begin{array}{cc}
c & a\\
-v & 1 \end{array}\right),\quad 
F_\omega=\left(\begin{array}{cc}
a & -ac\\
v+1-c & ac \end{array}\right),
\end{equation}
with certain restrictions on the parameters $a,v\in\R$ (in~\cite{Teplinsky08,KhaninTeplinsky13} these functions were denoted by $F_{a,v,c}$ and $G_{a,v,c}$). It is easy to verify that they satisfy a commutation relation of the form~(\ref{eq:commutation}), which in this case reduces to
\begin{equation}\label{eq:N=1:commuting}
F_\alpha\circ F_\omega\circ c^2\Id=F_\omega\circ F_\alpha.
\end{equation}

Following our approach, let us calculate the commuting pair $F^*_\alpha,F^*_\omega\in\psl$ dual to the commuting pair $F_\alpha,F_\omega$. The scheme dual to $\sigma$ is the rotational scheme of special type
$$
\sigma^*=\{(\omega\be,\alpha\be,\omega\en,\alpha\en)\}=\left\{\left[\begin{array}{cc}
\omega & \alpha\\
\alpha & \omega \end{array}\right]\right\}. 
$$ 
According to the algorithm described in~\ref{subsect:Phi}, we first successively obtain an array of eight nodal functions: $\Phi_{1\omega}=c^2\Id$, $\Phi_{2\alpha}=\Id$,  $\Phi_{1\alpha}=\Phi_{4\omega}=F_\omega\circ c^2\Id$, $\Phi_{2\omega}=\Phi_{3\alpha}=F_\alpha$,  $\Phi_{3\omega}=F_\omega\circ F_\alpha$, $\Phi_{4\alpha}=F_\alpha\circ F_\omega\circ c^2\Id$, and then derive from it the equalities $F^*_\alpha=(F_\omega^\intercal)^{-1}\circ c^{-2}\Id$, $F^*_\omega=F_\alpha^\intercal\circ c^{-2}\Id$. Using the formulas~(\ref{eq:N=1}), we obtain the dual (in our present sense) commuting pair explicitly:
$$
F^*_\alpha=\left(\begin{array}{cc}
a & c(c-v-1)\\
a & ac \end{array}\right),\quad 
F^*_\omega=\left(\begin{array}{cc}
c & -vc^2\\
a & c^2 \end{array}\right).
$$
These functions satisfy a commutation relation of the form~(\ref{eq:commutation}), which in this case is
$$
F^*_\omega\circ c^2\Id\circ F^*_\alpha=F^*_\alpha\circ F^*_\omega.
$$

Now let us compare the pair obtained according to our present approach with the duality described in~\cite{Teplinsky08,KhaninTeplinsky13}. There, the duality involution acted in the parameter space $a,v,c$ by the formula
\begin{equation}
\label{eq:N=1:Inv}
\Inv(a,v,c)=\left(\frac{c-v-1}{av},-\frac{v}{c},\frac{1}{c}\right),
\end{equation}
and hence, according to~(\ref{eq:N=1}), the commuting pair dual to the commuting pair $F_\alpha,F_\omega$ was considered to be
$$
\check F^*_\alpha=\left(\begin{array}{cc}
av & c(c-v-1)\\
av^2 & avc \end{array}\right),\quad 
\check F^*_\omega=\left(\begin{array}{cc}
c & -1\\
av & 1 \end{array}\right)
$$
(we recall that all matrices written here are regarded as elements of $\psl$, and hence we may multiply all their entries by any nonzero number, eliminating fractions in the present case). It satisfied the commutation relation corresponding to~(\ref{eq:N=1:commuting})
$$
\check F^*_\alpha\circ \check F^*_\omega\circ c^{-2}\Id=\check F^*_\omega\circ \check F^*_\alpha.
$$

It is easy to see that for the two variants of the commuting pair dual to $F_\alpha,F_\omega$, namely $\check F^*_\alpha,\check F^*_\omega$ (according to the construction of~\cite{Teplinsky08,KhaninTeplinsky13}) and $F^*_\alpha,F^*_\omega$ (according to our present construction), the following equalities hold:
$$
\check F^*_\alpha=v^{-1}\Id\circ F^*_\alpha\circ v\Id,\quad 
\check F^*_\omega=v^{-1}\Id\circ F^*_\omega\circ c^2v\Id.
$$
In these identities, multiplication on the left and on the right by $v^{-1}\Id$ and $v\Id$, respectively, is nothing else but a linear change of coordinates on $\R$, that is, a \emph{renormalization} of the given commuting pair (in~\cite{Teplinsky08,KhaninTeplinsky13}, successive  renormalizations of commuting pairs are considered, i.e., in addition to the induction process, the coordinates on $\R$ are intentionally changed linearly each time so that the initial interval $I_{\alpha\be}$ is always $[-1,0)$). Multiplication on the right by $c^2\Id$ transfers (in our present terminology) the role of the fixed element from $\omega\be$ to $\alpha\be$, so that instead of the equality $F^*_\omega\circ c^2\Id\circ F^*_\alpha=F^*_\alpha\circ F^*_\omega$ we have $\check F^*_\omega\circ \check F^*_\alpha\circ c^2\Id=\check F^*_\alpha\circ \check F^*_\omega$, which is related to switching the break size from $c^2$ to $1/c^2$ under the action of the involution~(\ref{eq:N=1:Inv}) in the parameter space.

The comparison carried out above allows us to conclude that the duality defined by us in this paper for $N\ge1$, in the particular case of $N=1$ is effectively the same duality that we successfully used in~\cite{Teplinsky08,KhaninTeplinsky13} to reveal the hyperbolic structure in the space of commuting pairs under the action of the renormalization operator, and ultimately to prove important results concerning rigidity for circle diffeomorphisms with a single break. Thus, the construction we present here is indeed a generalization to arbitrary $N\ge1$ of the construction we developed previously for the case $N=1$, and which has already demonstrated its usefulness for research.

\subsection{Infinite induction for linear-fractional interval exchange\\ transformations}

Despite the example given in~\ref{subsect:counterexample}, we claim that for any circle diffeomorphism with breaks, which has an irrational rotation number and whose trajectories of all $N\ge1$ break points are disjoint, there exist infinite algorithms of its successive renormalization and corresponding induction algorithms for linear-fractional interval exchange transformations (to whose space these renormalizations converge in the limit under certain combinatorial conditions and sufficient smoothness), corresponding to commuting collections exactly of the form described in~\ref{subsect:commuting_collections}, using only elementary induction steps which are applicable in the sense of~\ref{subsect:induction} and allowed in the sense of~\ref{subsect:prohibited}.

Since every sufficiently smooth (in particular, $C^{2+\varepsilon}$-smooth, $\varepsilon>0$, as in~\cite{KhaninTeplinsky13}) circle diffeomorphism with $N\ge1$ breaks and an irrational rotation number is topologically conjugate to the linear rotation of the circle by this number (by the classical Denjoy theory), then, according to the linear case considered in~\cite{Teplinsky24}, for such a circle diffeomorphism with breaks the first return map of its trajectories to any given union of a finite number of circle arcs is a nonlinear rotational interval exchange transformation. Thus, it is only necessary to choose such a union of arcs and such a sequence of induction steps in a suitable way so that the induction algorithm defined by them produces, on the one hand, infinite sequences of nonlinear interval exchange transformations for the given circle diffeomorphism with breaks and, on the other hand, also infinite sequences of linear-fractional interval exchange transformations specified by the commuting collections described in~\ref{subsect:commuting_collections} for rotational schemes of special type. In our approach, the mentioned union should consist of $N$ arcs, each containing exactly one break point (which will be a fixed endpoint and will remain in place throughout the entire infinite induction/renormalization process), while all other endpoints will belong to the trajectories of these $N$ fixed points.

There is no great difficulty in constructing such algorithms as described above; in particular, they can be constructed on the basis of successive \emph{dynamical partitions} of the circle into \emph{fundamental segments}, which were considered as early as in Herman's foundational work~\cite{Herman79} and which we used in~\cite{KhaninTeplinsky13}. The most obvious of the possible algorithms are based on \emph{rotational schemes in canonical form}, which were defined in Section~5.7 of~\cite{Teplinsky24}.

However, we must note that the description of infinite induction algorithms requires a considerable amount of text and logically falls beyond the scope of the present paper. We plan to describe such algorithms in detail in our subsequent works, as well as to extend the theory developed here to accelerated induction of Zorich type~\cite{Zorich96}, which is commonly used to construct a sequence of renormalizations of circle diffeomorphisms with singularities (in particular, by us in~\cite{KhaninTeplinsky13}).

Finally, let us point out that the mere existence of infinite induction for linear-fractional interval exchange transformations given by commuting collections does not imply that the dual commuting collections will produce nonlinear interval exchange transformations throughout the entire infinite process. In the case $N=1$, this was ensured by geometric bounds established for the regions of parameter space within which the renormalization algorithm acted~\cite{Teplinsky08}. In the case $N>1$, this problem remains open and will require a separate investigation.

\section{Summary}
\label{sect:summary}

In this paper, within the framework of the concept of interval rearrangement ensembles (IRE) proposed by us in~\cite{Teplinsky23}, and in view of the understanding of rotational interval exchange transformations obtained in~\cite{Teplinsky24}, we defined the notion of a commuting collection $\Fect$ of $2N$, $N\ge1$, increasing linear-fractional functions, which is a generalization of the previously known notion of a commuting pair (the case $N=1$), which serves, in particular, as a tool in the study of renormalizations of circle diffeomorphisms with a single break. Under certain broad enough assumptions, the space of such commuting collections is approached by successive renormalizations of circle diffeomorphisms with $N$ breaks, similarly to how renormalizations of circle diffeomorphisms with a single break approach the space of commuting pairs.

In our definition of a commuting collection, which functions are regarded as elements of the projective special linear group $\psl$, the commutation relations~(\ref{eq:commutation}) include in themselves the full information about the rotational interval exchange scheme of special type $\sigma$ and the break sizes $\nu_i>0$, $1\le i\le N$.

For commuting collections, we defined an involutive duality operation $\Inv$, which associates with a collection $\Fect$ its dual collection $\Fect^*$ with the rotational scheme of special type $\sigma^*$ dual to $\sigma$ (in the sense of duality for IRE schemes) and with the same break sizes $\nu_i$, $1\le i\le N$.

It was shown (Theorem~1) that the defined duality operation reverses time for the induction process, i.e., it conjugates each elementary induction step to a certain reverse induction step.

This result, apart from being interesting by itself, also opens a potential possibility of proving hyperbolicity of the renormalization operator for circle diffeomorphisms with multiple breaks---and thereby moving closer to a general result concerning the rigidity of such maps, similar to the result on the rigidity of circle diffeomorphisms with a single break obtained in~\cite{KhaninTeplinsky13}.

As a separate point, we demonstrated that the duality that played a key role in proving the above-mentioned result of~\cite{KhaninTeplinsky13} coincides with the duality defined by us in this paper (for the case $N=1$) up to a change of coordinates, which confirms the relevance of the developed theory.

In our further works, we plan to describe infinite induction algorithms for commuting collections, which directly correspond to renormalization algorithms for circle diffeomorphisms with multiple breaks.


\begin{thebibliography}{99}

\bibitem{KhaninVul91} K.~M. Khanin, E.~B. Vul. 
Circle homeomorphisms with weak discontinuities. 
In {\em  Adv.\ Soviet Math., 3. Dynamical systems and statistical mechanics} 
Amer.\ Math.\ Soc., Providence, RI, 57--98 (1991).

\bibitem{Yampolsky03} M.~Yampolsky. 
Global renormalization horseshoe for critical circle maps. {\em Commun.\ Math.\ Phys.}, {\bf 240} (2003), 75--96.

\bibitem{KhaninTeplinsky13} K.~Khanin, A.~Teplinsky.
Renormalization horseshoe and rigidity for circle diffeomorphisms with breaks.
{\em Commun.\ Math.\ Phys.} {\bf 320} (2013), 347--377.

\bibitem{Teplinsky08} 
O.~Y.~Teplins’kyi.
Hyperbolic horseshoe for circle diffeomorphisms with break.
{\em Nonlinear Oscill.} 
{\bf 11} (2008), 114--134.

\bibitem{KhaninTeplinsky07} K.~Khanin, A.~Teplinsky. 
Robust rigidity for circle diffeomorphisms with singularities.
{\em Invent.\ Math.} {\bf 169} (2007), 193--218.

\bibitem{Zorich96} A.~Zorich. 
Finite Gauss measure on the space of interval exchange transformations. Lyapunov exponents. {\em Annales de l'Institut Fourier.} {\bf 46} (1996), 325--370.

\bibitem{Veech78} W.~A.~Veech. 
Interval exchange transformations. 
{\em J.\ Analyse Math.} {\bf 33} (1978), 222--272.

\bibitem{Rauzy79} G.~Rauzy. 
\' Echanges d’intervalles et transformations induites. 
{\em Acta Arith.} {\bf 34} (1979), 315--328.

\bibitem{Keane75} M.~Keane. 
Interval exchange transformations. 
{\em Math.\ Z.} {\bf 141} (1975), 25--31.

\bibitem{Teplinsky23} 
A.~Teplinsky.
Interval Rearrangement Ensembles.
{\em Ukr.\ Math.\ J.} 
{\bf 75} (2023), 282--304.

\bibitem{Teplinsky24} 
A.~Teplinsky.
Rotational Interval Exchange Transformations.
{\em Ukr.\ Math.\ J.} 
{\bf 76} (2024), 501--521.

\bibitem{CunhaSmania13} K.~Cunha, D.~Smania. 
Renormalization for piecewise smooth homeomorphisms on the circle. 
{\em Ann.\ Inst.\ H.~Poincar\'e Anal.\ Non Lin\'eaire} {\bf 30} (2013), 441--462.

\bibitem{Veech82} W.~A.~Veech. 
Gauss measures for transformations on the space of interval exchange maps. 
{\em Ann.\ of Math.} {\bf 115} (1982), 201--242.

\bibitem{Herman79} M.-R.~Herman. 
Sur la conjugaison differentiable des diffeomorphismes du cercle a des
rotations. 
{\em I.~H.~E.~S.\ Publ.\ Math.} {\bf 49} (1979), 5--233.

\end{thebibliography}
\end{document}